\theoremstyle{plain}
\newtheorem{theorem}{Theorem}[section]
\newtheorem{lemma}[theorem]{Lemma}
\newtheorem{corollary}[theorem]{Corollary}
\theoremstyle{definition}
\newtheorem{definition}[theorem]{Definition}
\newtheorem{example}[theorem]{Example}
 \newtheoremstyle{TheoremNum}  
        {\topsep}{\topsep}              
        {\itshape}                      
        {}                              
        {\bfseries}                     
        {.}                             
        { }                             
        {\thmname{#1}\thmnote{ \bfseries #3}}
    \theoremstyle{TheoremNum}
    \newtheorem{theorem-repeat}{Theorem}
\DeclareMathOperator{\Tr}{Tr}
\DeclareMathOperator{\tr}{tr}
\DeclareMathOperator{\id}{id}
\DeclareMathOperator{\mo}{mod}
\DeclareMathOperator{\Real}{Re}
\DeclareMathOperator{\Arf}{Arf}
\newcommand{\SU}{{\mathrm {SU}}}
\newcommand\Cb            {\mathbb{C}}
\newcommand\Mb            {\mathbb{M}}
\newcommand\Rb            {\mathbb{R}}
\newcommand\Zb            {\mathbb{Z}}
\newcommand\Hb            {\mathbb{H}}
\begin{document}

\title{Two-dimensional state sum models and spin structures}
\date{} 
\author{John W. Barrett, Sara O. G. Tavares \\School of Mathematical Sciences\\ University of Nottingham}
\maketitle
\begin{abstract}The state sum models in two dimensions introduced by Fukuma, Hosono and Kawai are generalised by allowing algebraic data from a non-symmetric Frobenius algebra. Without any further data, this leads to a state sum model on the sphere. When the data is augmented with a crossing map, the partition function is defined for any oriented surface with a spin structure. An algebraic condition that is necessary for the state sum model to be sensitive to spin structure is determined.
Some examples of state sum models that distinguish topologically-inequivalent spin structures are calculated.
 \end{abstract}

\section{Introduction} \label{sec:intro}

The aim of this paper is to generalise the two-dimensional state sum models defined by Fukuma, Hosono and Kawai ~\cite{Fukuma} and by Lauda and Pfeiffer \cite{Lauda} to define new state sum models that depend on a choice of spin structure for the surface.

In general, a state sum model is defined using a combinatorial structure for a manifold such as a lattice or triangulation. In a physics context, this manifold could be either space or space-time. The model has variables that are defined at each `lattice site', which could be vertices or edges etc., depending on the model. A value for a variable at a particular site is called a state and a state of the whole model is a particular choice of state for each site. The model is determined by defining a number for each state of the model called the weight. Then the partition function $Z$ of the theory is calculated by summing the weights over all possible states 
\begin{equation}\label{statesum} Z=\sum_\text{states $\phi$}\text{weight}(\phi).\end{equation}
If the manifold has a boundary then usually this formula is refined by not summing over the states on the boundary; these are regarded as boundary data.

State sum models are used in a variety of areas of mathematics and physics, including statistical mechanics \cite{Baxter}, random matrices \cite{Francesco}, knot theory \cite{KauffmanState}, lattice field theory \cite{Oeckl:2000hs} and quantum gravity \cite{Barrett:2000xs}. In a quantum model, the weight is a complex number called the amplitude of the configuration. Statistical models are very similar, but the weights are real numbers that are interpreted as probabilities.

In quantum mechanics, the Feynman-Hibbs description of the path integral for the unitary evolution operator over a finite time interval is defined as a sum over intermediate states at a finite number of intermediate times. This can be viewed as a state sum model on a one-dimensional space-time, the weight being the product of matrix elements for the evolution operator. The state sum \eqref{statesum} can be viewed as the generalisation of this construction to higher-dimensional space-times and is therefore interpreted as a discrete version of the functional integral, such as is used in lattice gauge theory.
 
The models considered in this paper are on a two-dimensional manifold (a surface),  and the discrete structure on it is a triangulation. The two-dimensional case is almost the simplest possible, as the complexity of the models increases with the dimension of the manifold. Eventually one will be interested in higher-dimensional models and it is hoped that understanding the two-dimensional case will help.

The second main feature of the models is that the partition function for a closed manifold is independent of the triangulation of the manifold. Such triangulation-invariant models are important in quantum gravity, where the invariance is related to the symmetry of general relativity under diffeomorphisms. Triangulation-invariant models also model topological phases in solid state physics \cite{Levin:2004mi}. 

A general framework for two-dimensional triangulation-independent state sum models was first defined by Fukuma, Hosono and Kawai (FHK)~\cite{Fukuma}, following the earlier examples (with an infinite set of states) determined by the combinatorial construction of two-dimensional Yang-Mills quantum field theory \cite{Witten:1991we}. In the FHK models, the weight of a state is defined using a product of local factors that depend only on the state variables in a local neighbourhood (\S\ref{sec:lattice_tft}, equation \eqref{eq:Z_surf}). Here, such a state sum model is called a naive state sum model.  The fact that the algebraic data for an FHK model is a symmetric Frobenius algebra was recognised by \cite{Fuchs:2001am}. 

The generalisation of a naive state sum model is called a diagrammatic state sum model and was first defined by Lauda and Pfeiffer \cite{Lauda}; their models are related to one class of models described here (the curl-free ones in \S\ref{sec:curlfreemodels}). Here, the weight is determined by constructing a diagram in $\Rb^2$ from the triangulation and then evaluating a naive state sum model for the diagram. The diagram is constructed by taking the graph dual to the triangulation (having a vertex in the centre of every triangle and a line crossing every edge) and then projecting it into the plane, $\Rb^2$. Crucially, the horizontal direction in the plane is singled out and lines that have maxima and minima have a different evaluation in the state sum model than lines that are nowhere horizontal. There is also an amplitude factor for points where lines cross in the projection (`a crossing'). 

Such a diagrammatic calculus was first introduced by Penrose for calculating invariants of representations of $\SU(2)$, known as spin networks \cite{Major:1999md}.
It is also familiar in knot theory where topological invariants of knots are calculated in such a fashion \cite{Kauffman-handbook}. All these examples have the common feature that the overall result of the state sum model is a topological invariant of some kind, even if the individual pieces appear to break that invariance.

The partition function of a closed surface in an FHK or Lauda-Pfeiffer model depends only on the topology of the surface and so the model is called a topological state sum model. A certain refinement of the models is required to define models that depend also on the spin structure of a manifold. 

Spin structures are important in physics because the notion of a fermion field depends on a choice of a spin structure; it is used in the definition of the bundle of spinors. Therefore to be able to model fermionic systems adequately it is necessary to have a good understanding of quantum models that depend on spin structures. If the space-time is $\Rb^d$, then there is only one spin structure, so that spin structures do not need to be mentioned. However for many other manifold topologies, the spin structure plays an important role. For example, on a torus a spin structure is equivalent to posing periodic or anti-periodic boundary conditions for a fermion field for the different ways in which one can traverse the torus along a closed loop.

In general, a spin structure is a non-local structure on a manifold, similar to the notion of a cohomology class. While it can be represented by some local data, this data is not unique and often not homogeneous. For example, one can describe one of the spin structures on a circle by breaking the circle at one point and prescribing anti-periodic boundary conditions for a fermion field at that point. However it is clear that the choice of point is arbitrary and a certain `gauge transformation' of the data relates any two such choices. 

In this paper, a spin structure of a surface is represented by an immersion of the surface in $\Rb^3$, two immersions being equivalent if they are related by regular isotopy. This representation is very convenient because it immediately gives a diagrammatic state sum model by embedding the dual graph in $\Rb^3$ and then projecting to $\Rb^2$. Moreover the regular isotopy in $\Rb^3$ leads to exactly the relations on diagrams in $\Rb^2$ that are familiar as generalised Reidemeister moves from knot theory. One can regard the local data representing the spin structure as the values of the three coordinates of $\Rb^3$.

In previous work, a one-dimensional topological state sum model with fermionic characteristics is defined in \cite{Barrett:2012ii}. In that paper a state sum model is defined on a circle that has a gauge field with holonomy $Q$ around the circle. However by setting $Q$ to be plus or minus the identity matrix, this can be regarded as a model on a circle with either of the two possible spin structures for the circle; one finds that the partition functions for the two spin structures differ. In two dimensions, Kasteleyn orientations and dimer configurations have been used to represent spin structures for statistical dimer models \cite{CimasoniReshetikhin}. This representation of a spin structure is rather different to that adopted here, but it shares the feature that many configurations of local data correspond to the same spin structure.

 An outline of the contents of the paper is as follows. The construction of naive state sum models is reviewed in \S \ref{sec:lattice_tft} giving the equivalence of an FHK state sum model with a symmetric special Frobenius algebra. Over $\Cb$ or $\Rb$, these are direct sums of matrix algebras and the partition function for an oriented surface is calculated (theorem \ref{theo:state_sum_algebra}). The formula for a Frobenius algebra over $\Cb$ is essentially that given by FHK whereas the formula for an algebra over $\Rb$ is new. 

The properties of a diagrammatic calculus for planar diagrams (i.e., diagrams without crossings) are described in detail in \S \ref{sec:diagram}. The corresponding state sum model defines a partition function for a disk. These state sum models are called planar if in addition they are invariant under Pachner moves, in which case the algebraic data is
generalised to the case of Frobenius algebras that are not necessarily symmetric. The definition of these models is new but not very surprising; it amounts to observing that the definition of the diagrammatic calculus of \cite{Lauda} can be simplified in the case of a disk so that data for a crossing are not required. The state sum model data is a set of coefficients $(C,B,R)$ from which one can construct a vector space $A$ and a multiplication map $m$ on $A$.
The main result of this section is the equivalence of the state sum model data with a certain type of Frobenius algebra, generalising the corresponding result (theorem \ref{theo:state_sum_algebra}) for the FHK case.

\begin{theorem-repeat}[\ref{theo:diagram}] 
Non-degenerate diagrammatic state sum model data $(C,B,R)$ determine a planar state sum if and only if the multiplication map $m$, the bilinear form $B$ and the distinguished element $\beta = m(B)$ determine on $A$ the structure of a special Frobenius algebra with identity element $1=R\beta$.
\end{theorem-repeat}

This theorem leads to a classification of the possible state sum models with real or complex coefficients by classifying the possible Frobenius algebras (theorem \ref{theo:diagram_semi_simple}).
The models also determine a partition function for a sphere in a construction called a spherical state sum model (\S\ref{sec:spherical}).  A stronger version of spherical symmetry is an additional condition identified in lemma \ref{lem:spherical}, which would be needed to formulate state sum models on submanifolds (with boundary) of the sphere. However, manifolds with boundary are not considered further in this paper, so this condition is not incorporated into the definitions.

Diagrammatic state sum models are generalised to all surfaces without boundary in \S \ref{sec:crossing} by specifying a crossing map in addition to the Frobenius algebra. A set of axioms for the crossing is given; these, together with the axioms for a planar state sum model, ensure the state sum model is an invariant of the surface with a spin structure. Such state sum models are therefore called spin state sum models. Some aspects of this construction are the subject of independent work by Novak and Runkel \cite{Novak,Novak_Runkel}. A useful source of examples is the construction of a spin state sum model from a graded Frobenius algebra with a bicharacter (lemma \ref{lem:graded}).

If a spin state sum model does not actually depend on the spin structure then it is called topological and the partition function is a topological invariant of the surface. Particular examples of topological state sum models can be made by adding an additional axiom to the spin state sum models called the curl-free condition. The FHK state sum models can be seen as a special case of these by working with the canonical crossing map. Thus there is the following hierarchy of models on closed surfaces:
$$\text{ FHK }\subset\text{ topological }\subset\text{ spin }\to\text{ spherical}.$$ 
In this hierarchy, the FHK models are examples of topological models, which are in turn examples of spin models. Each spin model determines a spherical model by ignoring the crossing map.
 
The curl-free condition simplifies the models considerably, giving a class of models in \S\ref{sec:curlfreemodels} that is relatively easy to describe. Some explicit examples are presented, with one of them having a non-symmetric Frobenius form (example \ref{ex:non_symmetric_Frobenius}).

Finally, the spin models are investigated in detail in \S\ref{sec:spinssm}. It is shown that a spin state sum model has two distinguished elements in the center of the Frobenius algebra, $\eta$ and $\chi$, from which the partition function of any closed surface can be calculated. The difference between $\eta$ and $\chi$ is that they correspond to two inequivalent spin structures of a torus with a disk removed. The main result is the formula for the partition function, which uses the Frobenius form $\varepsilon$ and the notion of the parity of a spin structure on a surface. The parity (odd or even) distinguishes spin structures that are not related by a homeomorphism of the surface.

\begin{theorem-repeat}[\ref{theo:main}]
Let  $(C,B,R,\lambda)$ be a spin state sum model. Then the partition function $Z$ of a triangulated surface $\Sigma_g$ of genus $g$ immersed in $\Rb^3$ depends only on $g$ and the parity of the spin structure $s$. Moreover,
\begin{align}
Z(\Sigma_g,s)=\left\lbrace
\begin{array}{lcl}
R\varepsilon(\eta^g)&\quad& \text{($s$ even parity)}\\
R\varepsilon(\chi\eta^{g-1})&& \text{($s$ odd parity).}
\end{array}
\right. 
\end{align}
\end{theorem-repeat}
This is computed explicitly for some particular spin state sum models showing that the topologically-inequivalent spin structures on a surface can be distinguished by the models (examples \ref{ex:complex_crossing}--\ref{ex:cyclic}). A necessary condition for this is that $\eta\ne\chi$.

A categorical perspective on the state sum model axioms is given in \S\ref{sec:cat} with a brief discussion of category-theoretic generalisations of the models. Category theory is not mentioned in \S\ref{sec:lattice_tft}--\S\ref{sec:crossing} since we prefer to emphasise the concrete construction of models using linear maps, as is familiar from quantum mechanics. However, category theory underlies the constructions in this paper. For example, one can understand the axioms involving the crossing in definition \ref{def:spin-model} as those of a symmetric ribbon category. One expects that category-theoretic constructions will play an important role in determining higher-dimensional analogues of the theory presented in this paper \cite{Douglas:2013aea}.

\section{Naive state sum models} \label{sec:lattice_tft}

This section reviews the construction of state sum models according to the work of Fukuma, Hosono and Kawai~\cite{Fukuma}, with the calculation of examples. These state sum models are called naive state sum models to distinguish them from the generalisation to diagrammatic ones in \S\ref{sec:diagram}.

\begin{figure}
\centering
\begin{subfigure}[t!]{0.47\textwidth}
                \centering
		\includegraphics{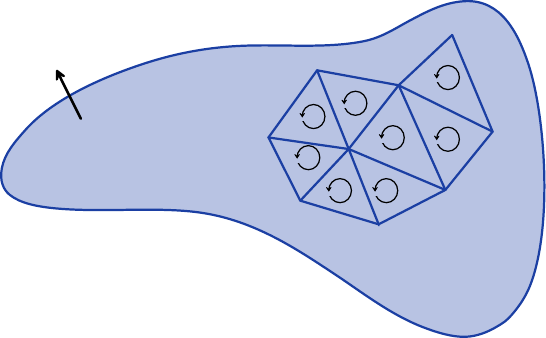}
		\caption{\emph{Orientation}. Each triangle inherits an orientation induced by the orientation of the surface.}
		\label{fig:glued_orient}
\end{subfigure}
\hspace{5mm}
\begin{subfigure}[t!]{0.47\textwidth}
                	\centering
		\vspace{8.5mm}
		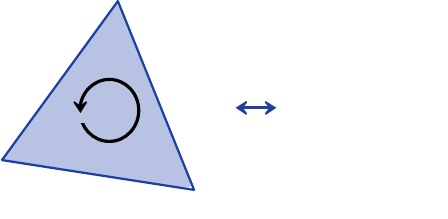
		\vspace{7.15mm}
		\caption{\emph{Triangle amplitudes}. Each edge on a triangle is associated with one of a finite set of states $S$. The quantum amplitude for this oriented triangle is $C_{abc}$.}
		\label{fig:tri_label}
\end{subfigure}
\label{fig:triang_and_cyclicity}
\caption{}
\end{figure} 

The idea of a state sum model is to calculate a quantum amplitude for a given triangulated manifold, possibly with a boundary. These amplitudes are numbers in a field $k$, for which the main examples of interest here are $k=\Rb$ or $\Cb$.   A surface $\Sigma$ is a two-dimensional compact manifold, orientable but not necessarily closed.  The surfaces are triangulated, and since they are compact, the number of vertices, edges and triangles is finite. The orientation of $\Sigma$ induces an orientation on each triangle. This means a triangle has a specified cyclic order of its vertices and these orientations are glued together coherently to preserve the overall orientation of the surface (see figure~\ref{fig:glued_orient}).

A naive state sum model on an oriented triangulated surface $\Sigma$ has a set of quantum amplitudes for each vertex, edge and triangle. These are multiplied together and summed to give an overall amplitude to $\Sigma$.   

Each edge on a triangle is associated with one of a finite set of states $S$ and the quantum amplitude for the oriented triangle with edge states $a,b,c \in S$ is $C_{abc} \in k$, as shown in figure~\ref{fig:tri_label}. These amplitudes are required to satisfy invariance under rotations, 
\begin{align} \label{eq:C_cycle}   
C_{abc}=C_{bca}=C_{cab},
\end{align}
which is to say they must respect the cyclic symmetry of an oriented triangle. If the orientation is reversed then the amplitude is $C_{bac}$ and therefore not necessarily equal to $C_{abc}$. 

The triangles are glued together using a matrix $B^{ab}$ associated to each edge of the triangulation not on the boundary (an interior edge). Since the formalism for naive state sum models does not distinguish the two triangles meeting at the edge, one must require symmetry,
\begin{align} \label{eq:metric}
B^{ab}=B^{ba}.
\end{align}
Note that this condition is relaxed in \S \ref{sec:diagram}, together with a modification of the cyclic symmetry \eqref{eq:C_cycle}. 

Finally, each interior vertex has amplitude $R \in k$. This is a slight generalisation of the  formalism presented in \cite{Fukuma}, where $R=1$ was assumed.

\begin{figure}
\centering
\begin{subfigure}[t!]{0.47\textwidth}
                \centering
		\vspace{7.3mm}
		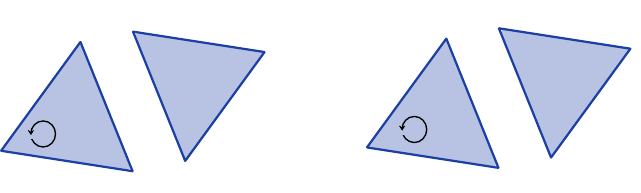 
		\vspace{0.5mm}
		\caption{\emph{Symmetry}. The symmetry relation $B^{ab}=B^{ba}$ implies that the amplitude for the two triangles glued together is invariant under a rotation by
$\pi$.}
		\label{fig:tri_metric}
\end{subfigure}
\hspace{5mm}
\begin{subfigure}[t!]{0.47\textwidth}
                \centering
		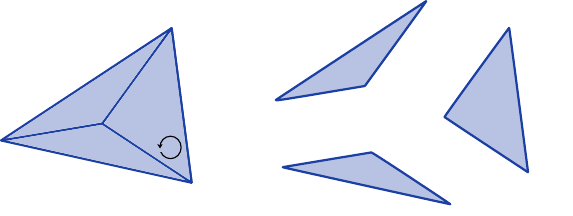
		\vspace{3.5mm}
		\caption{\emph{A triangulation of the disk}. The partition function $Z_{abc}$ is constructed from the constants $C$ associated to each triangle and matrices $B$ associated to each interior edge.}
		\label{fig:disk}
\end{subfigure}
\label{fig:metric_and_disk}
\caption{}
\end{figure} 

All the data needed to calculate the amplitude of a surface is now defined. Each edge in each triangle has a variable $a\in S$. For a given value of each of these variables, the amplitude of a triangle $t$ is $A(t)=C_{abc}$ (with $a,b,c$ the three variables on the three edges), and likewise the amplitude of an edge $e$ is $A(e)=B^{ab}$. The amplitude of the surface is called the partition function and is given by the formula that involves summing over the states on all interior edges,
\begin{align} \label{eq:Z_surf}
Z(\text{\footnotesize{boundary states}})=R^{V}\sum_{\text{interior states}}\left(\prod_{\text{triangles }t}A(t)\prod_{\text{interior edges }e}A(e)\right),
\end{align}
with $V$ the number of interior vertices. For example, the amplitude of the triangulated disk of figure~\ref{fig:disk} is
\begin{align} \label{eq:Z_disk}
Z_{abc}=R\,C_{e'dc}\,C_{af'e}\,C_{fbd'}\,B^{dd'}B^{ee'}B^{ff'},
\end{align}
using the Einstein summation convention for each paired index. The resulting partition function depends on the boundary data $a,b,c$, which are not summed. The formula \eqref{eq:Z_surf} for a naive state sum model is an instance of \eqref{statesum} with the weight a product of factors for each vertex, edge and triangle.

As a consequence of \eqref{eq:C_cycle} and \eqref{eq:metric}, the amplitude is invariant under an orientation-preserving simplicial map (a map that sends vertices to vertices, edges to edges and triangles to triangles), as in the example shown in figure~\ref{fig:tri_metric}. 

The formalism can be interpreted in terms of linear algebra. The states $a \in S$ correspond to basis elements $e_a$ of a vector space $A$. The amplitude $C_{abc}$ is the value of a trilinear form $C\colon A\times A\times A\to k$ on basis elements, 
$C(e_a,e_b,e_c)= C_{abc}$. The form $C$ can also be viewed as a linear map on the tensor product, 
 $C \colon A \otimes A \otimes A \to k$. Similarly, 
\begin{equation} B=e_a\otimes e_b \, B^{ab} \in A \otimes A.\end{equation}
 This element can be viewed as a bilinear form on $A^*$, i.e., $B\colon A^*\times A^*\to k$, with matrix elements $B^{ab}= B(e^a,e^b)$,  using the dual basis elements $e^a$. This linear algebra perspective means it is possible to regard state sum models as isomorphic if they are related by a change of basis; this is used below. 

The bilinear form $B$ can be used to raise indices; thus, 
using the definition $C_{ab}{}^c = C_{abd}\,B^{dc}$ there is a multiplication map $m \colon A \otimes A \to A$ with components 
\begin{equation}\label{eq:multiplicationmap} m(e_a \otimes e_b)= C_{ab}{}^c\, e_c.\end{equation} 
The notations $m(e_a \otimes e_b)=e_a \cdot e_b$ will be used interchangeably. The state sum model data can also be used to determine a distinguished element of $A$,
\begin{align}
\beta=m(B)=e_a\cdot e_b\,B^{ab}.
\end{align}
Throughout it is assumed the data for the state sum model are non-degenerate: $R\neq 0$, $B(\cdot,a)=0 \Rightarrow a=0$ and $C(\cdot,\cdot,a)=0 \Rightarrow a=0$. This means $B$ has an inverse 
\begin{equation}B^{-1}=B_{ab}e^a\otimes e^b \in A^{\ast} \otimes A^{\ast}.\end{equation}
 This is defined by 
\begin{equation}B_{ac}B^{cb}=\delta_a^{b}.\label{eq:snake}\end{equation}
 This determines a bilinear form on $A$ with components $B^{-1}(e_a,e_b)=B_{ab}$ and can be used to lower indices.

 Note that this discussion of the formalism in terms of linear algebra does not depend on the symmetry of $B$, and these definitions will also be used in later sections where the symmetry of $B$ is dropped.

A topological state sum is one for which the partition function of a surface is independent of the triangulation. This is made precise by the following definition.

\begin{definition}
A state sum model is said to be topological if $Z(M)=Z(M')$ whenever $M$ and $M'$ are two closed oriented triangulated surfaces on which the state sum model is defined and there is a piecewise-linear homeomorphism $f\colon M\to M'$ that preserves the orientation.
\end{definition}

Any two triangulations of a surface are connected by a sequence of the two Pachner moves, shown in figures~\ref{fig:Pach1} and \ref{fig:Pach2}, or their inverses\footnote{Triangulations are allowed to be degenerate, that is, two simplexes can intersect in more than one face. It is allowable to use a degenerate triangulation that can be subdivided by Pachner moves into a non-degenerate one.}. For a closed manifold this result is proved in  \cite{Pachner,Lickorish-moves}.  (In fact this result can be extended to a manifold with boundary \cite{CowardLackenby}, but this result is not used here.) Thus it is sufficient to check for each Pachner move that the partition functions for the disk on the two sides of the move are equal.

\begin{figure}
\centering
\begin{subfigure}[t!]{0.47\textwidth}
                \centering
		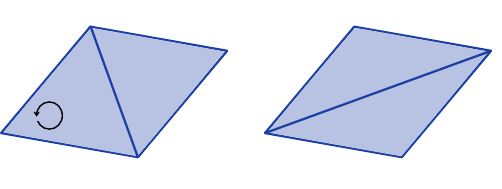 
		\vspace{3mm}
		\caption{\emph{Pachner move 2-2}. The change of triangulation of a manifold can be interpreted as exchanging two faces of a tetrahedron with the remaining two.}
		\label{fig:Pach1}
\end{subfigure}
\hspace{5mm}
\begin{subfigure}[t!]{0.47\textwidth}
                \centering
		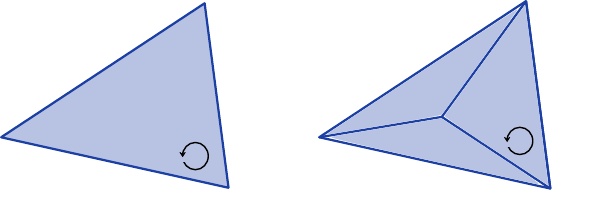
		\vspace{1.5mm}
		\caption{\emph{Pachner move 1-3}. This change of triangulation can be interpreted as replacing one face of a tetrahedron with the remaining three.}
		\label{fig:Pach2}
\end{subfigure}
\label{fig:Pachner_moves}
\caption{}
\end{figure} 

In the case of topological state sum models there is a connection between the vector space $A$ and a Frobenius algebra. Recall that a Frobenius algebra is a finite-dimensional associative algebra $A$ with unit $1 \in A$ and a linear map $\varepsilon \colon A \to k$ that determines a non-degenerate bilinear form $\varepsilon \circ m$ on $A$. The linear map $\varepsilon$ is called the Frobenius form. A Frobenius algebra is called symmetric if $\varepsilon \circ m$ is a symmetric bilinear form. Let $B\in A\otimes A$ be the inverse of $B^{-1}=\varepsilon\circ m$ according to \eqref{eq:snake}. Then the Frobenius algebra is called special if $m(B)$ is a non-zero multiple of the identity element.

 A naive state sum model that obeys the Pachner moves is the type of model discussed by Fukuma, Hosono and Kawai, and so these are called FHK state sum models. The following result is a more precisely-stated version of their result in \cite{Fukuma}.

\begin{theorem}\label{theo:state_sum_algebra}
Non-degenerate naive state sum model data $(C,B,R)$ determine an FHK state sum model if and only if the multiplication map $m$, the bilinear form $B$ and the distinguished element $\beta$ determine on $A$ the structure of a symmetric special Frobenius algebra with identity element $1=R\beta$. 
\end{theorem}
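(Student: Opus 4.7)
The plan is to match each axiom of a Frobenius algebra with either a Pachner move or with one of the symmetry conditions \eqref{eq:C_cycle}, \eqref{eq:metric} already built into the state sum data. Before touching the Pachner moves, first I would record one purely formal consequence of the cyclic and metric symmetries: the bilinear form $B^{-1}$ is symmetric and satisfies the cyclic/associative compatibility
\begin{equation*}
B^{-1}(xy, z) = B^{-1}(x, yz) = B^{-1}(yz, x),
\end{equation*}
which follows from $C_{abc}=C_{bca}$ together with the identity $B_{ab}B^{ea}=\delta_b^e$ obtained from \eqref{eq:snake} and \eqref{eq:metric}. Since any two triangulations of a surface are connected by Pachner moves, the FHK condition is equivalent to invariance under the 2-2 and 1-3 moves on the disk.

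\textbf{Pachner 2-2 $\Leftrightarrow$ associativity.} Writing both sides of the move in figure~\ref{fig:Pach1} as contracted products $C\,B\,C$ and raising the shared index with $B$ reduces the equation to $m(m(x\otimes y)\otimes z)=m(x\otimes m(y\otimes z))$ on the three boundary labels, so 2-2 invariance is exactly associativity of $m$.

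\textbf{Pachner 1-3 $\Leftrightarrow$ unit plus specialness.} Starting from the three-triangle amplitude \eqref{eq:Z_disk}, I would iteratively absorb the three interior contractions using associativity and the compatibility of the first paragraph. The key auxiliary identity, valid in any associative algebra with the above compatibility, is
\begin{equation*}
\Tr(L_y) = B^{-1}(y,\beta),
\end{equation*}
where $L_y\colon A\to A$ is left multiplication by $y$; this is a short computation from $\beta=m(B)$ and the compatibility. Applied to the three-triangle disk it gives $Z_{abc}=R\,B^{-1}(e_ae_be_c,\beta)=R\,B^{-1}(e_a,e_be_c\beta)$, so the 1-3 condition $Z_{abc}=C_{abc}=B^{-1}(e_a,e_be_c)$ becomes
\begin{equation*}
B^{-1}\bigl(e_a,\,e_b e_c(R\beta)-e_b e_c\bigr)=0\quad\text{for all }a,b,c.
\end{equation*}
The non-degeneracy hypothesis $C(\cdot,\cdot,z)=0\Rightarrow z=0$ forces $m$ to be surjective, so non-degeneracy of $B^{-1}$ yields $e_b e_c(R\beta)=e_be_c$; rewriting the 1-3 condition using the cyclic compatibility similarly gives $(R\beta)e_a e_b=e_a e_b$. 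By surjectivity of $m$, $1:=R\beta$ is a two-sided identity, and specialness $\beta=R^{-1}\cdot 1$ is automatic. Conversely, in any special Frobenius algebra with $1=R\beta$ the trace identity runs in reverse and recovers 1-3 invariance.

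\textbf{Frobenius form and conclusion.} With the unit in hand, define $\varepsilon\colon A\to k$ by $\varepsilon(x):=B^{-1}(1,x)$. The compatibility of paragraph one gives $\varepsilon(xy)=B^{-1}(1\cdot x,y)=B^{-1}(x,y)$, so $\varepsilon\circ m=B^{-1}$ is non-degenerate, and the symmetry of $B$ yields $\varepsilon(xy)=\varepsilon(yx)$, making the Frobenius algebra symmetric. This completes the forward direction. The converse follows by setting $C_{abc}:=\varepsilon(e_ae_be_c)$ and $B_{ab}:=\varepsilon(e_ae_b)$ and verifying \eqref{eq:C_cycle}, \eqref{eq:metric}, and both Pachner moves directly from the symmetric special Frobenius axioms, essentially reversing the calculations above. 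The main obstacle is the algebraic unpacking of the 1-3 move: lining up the six interior contractions so that the trace formula $\Tr(L_y)=B^{-1}(y,\beta)$ emerges cleanly, and then using the non-degeneracy of $C$ (not merely of $B$) to upgrade the resulting identity to the statement that $R\beta$ is a genuine unit of $A$.
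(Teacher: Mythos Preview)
Your proof is correct and follows essentially the same route as the paper: the 2--2 move is identified with associativity, the 1--3 move with the unit condition $R\beta=1$, and the Frobenius structure is read off from the cyclic compatibility $B^{-1}(xy,z)=B^{-1}(x,yz)$. The only organisational difference is in the 1--3 step: the paper applies associativity twice to the three-triangle amplitude and then peels off a factor of $C$ directly via non-degeneracy to obtain $R\beta\cdot e_b=e_b$ on basis vectors, whereas you package the same manipulation through the trace identity $\Tr(L_y)=B^{-1}(y,\beta)$ and then invoke surjectivity of $m$ (itself a consequence of the non-degeneracy of $C$) to pass from $e_be_c(R\beta)=e_be_c$ to a genuine unit. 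Both arguments use exactly the same ingredients; yours is a slightly more conceptual repackaging of the same computation.
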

\begin{proof}
The proof begins by showing that the data determine a symmetric Frobenius algebra. The first Pachner move, shown in figure~\ref{fig:Pach1}, can be written
\begin{align} \label{eq:Pach1}
C_{ab}{}^e\, C_{ecd}= C_{bc}{}^e\, C_{aed}
\end{align}
and is equivalent to associativity of the multiplication. To see this note that using the notation \eqref{eq:multiplicationmap} of a multiplication, $(e_a \cdot e_b) \cdot e_c=C_{ab}{}^e\,C_{ec}{}^{f}\,e_f$ and $e_a \cdot (e_b \cdot e_c)=C_{bc}{}^e\,C_{ae}{}^f \,e_f$; hence, the identity \eqref{eq:Pach1} is $B^{-1}(e_a \cdot (e_b \cdot e_c),e_d)=B^{-1}((e_a\cdot e_b)\cdot e_c,e_d)$. Since the bilinear form $B^{-1}$ is non-degenerate this is equivalent to having an associative multiplication $m$. A linear functional can be defined by setting $\varepsilon(x)= B^{-1}(x,1)$. The cyclic symmetry \eqref{eq:C_cycle} implies that $B^{-1}(x\cdot y,z)=B^{-1}(x,y\cdot z)$ and so $\varepsilon(x\cdot y)=B^{-1}(x\cdot y,1)=B^{-1}(x,y)$, which is non-degenerate and symmetric. 

The move in figure~\ref{fig:Pach2} requires the partition function of the disk \eqref{eq:Z_disk} to  equal $C_{abc}$. This is equivalent to
\begin{align}C_{ab}{}^c&=R\,C_{ed}{}^c\,C_{af'}{}^e\,C_{fb}{}^d\,B^{ff'}\notag\\
&=R\,C_{f'd}{}^h\,C_{ah}{}^c\,C_{fb}{}^d\,B^{ff'}
\end{align}
using associativity. For non-degenerate $C$, and rewriting $C_{ab}{}^c=C_{ah}{}^c\delta_b^h$, this is equivalent to
\begin{align}\label{eq:Pachner_13}\delta_b^h&=R\,C_{f'd}{}^h\,C_{fb}{}^d\,B^{ff'}\notag\\
&=R\,C_{f'f}{}^d\,C_{db}{}^h\,B^{ff'}.
\end{align}
Recognising that $\beta=B^{ff'}C_{f'f}{}^d\,e_d$, expression \eqref{eq:Pachner_13} implies that $R\beta$ must be the unit element for multiplication, and hence $A$ is an algebra; it is therefore a symmetric special Frobenius algebra. It is worth noting that the non-degeneracy of $C$ is necessary here, as without it the algebra need not even be unital.

Conversely, given a symmetric Frobenius algebra with linear functional $\varepsilon$, this defines a non-degenerate and symmetric bilinear form $B^{-1}= \varepsilon \circ m$ with property \eqref{eq:C_cycle}. The fact that the algebra is unital implies that $C$ is non-degenerate. Finally, associativity and the property $R\beta=1$ guarantee the Pachner moves are satisfied, meaning the state sum model created is an FHK model. 
\end{proof}

It is worth noting that having $\beta$ proportional to the identity is a non-trivial restriction on Frobenius algebras. For the cases  $k=\Rb$ or $\Cb$ of interest in this paper the Frobenius algebras, and hence the state sum models, are easily classified.  The results for the symmetric Frobenius algebras in this section are stated here, with the proof of the classification given in a more general context in theorem~\ref{theo:diagram_semi_simple} of \S \ref{sec:diagram}.

Let $\Mb_{n}(\Cb)$ denote the algebra of $n \times n$ matrices over $\Cb$.
An FHK state sum model over the field $\Cb$ is isomorphic, by a change of basis, to one in which the algebra is a direct sum of matrix algebras, \begin{align} \label{eq:complex_matrix_algebra}
A = \bigoplus\limits_{i=1}^N \Mb_{n_i}(\Cb).
\end{align}
The Frobenius form on an element $a = \oplus_i a_i$ is defined using the matrix trace on each factor:
\begin{align} \label{eq:cxfrob}
\varepsilon(a) = R \sum\limits_{i=1}^N n_i \Tr (a_i).
\end{align}

For the real case, the classification uses the division rings  $\Rb$, $\Cb$ and $\Hb$ regarded as algebras over $\Rb$; these are denoted $\Rb$, $\Cb_{\Rb}$ and $\Hb_{\Rb}$, and the dimension of the division ring $D$ as an $\Rb$-algebra is denoted
$|D|$; thus $|\Rb|=1$, $|\Cb_\Rb|=2$, $|\Hb_\Rb|=4$. The imaginary unit in $\Cb$ is denoted $\hat{\imath}$ and the corresponding units for the quaternions $\hat{\imath}$, $\hat{\jmath}$ and $\hat{k}$.
 The real part of a quaternion is defined as $\Real(t+x\hat{\imath}+y\hat{\jmath}+z\hat{k})= t$ and the conjugate by $(t+x\hat{\imath}+y\hat{\jmath}+z\hat{k})^*=t-x\hat{\imath}-y\hat{\jmath}-z\hat{k}$. The $n\times n$ matrices with entries in $D$ are denoted $\Mb_{n}(D)$ and are algebras over $\Rb$.

An FHK state sum model over the field $\Rb$  is isomorphic by a change of basis to one in which 
\begin{align} \label{eq:real_matrix_algebra}
 A = \bigoplus\limits_{i=1}^N \Mb_{n_i}(D_i), \quad \text{ with }D_i=\Rb,\Cb_{\Rb},\text{or }\Hb_{\Rb}.
\end{align}
 The Frobenius form is defined by
\begin{align} \label{eq:realfrob}
\varepsilon(a) =R \sum\limits_{i=1}^N  |D_i|\,n_i \Real  \Tr(a_i).
\end{align}
The fact that these formulas do determine Frobenius algebras is proved here.
\begin{lemma} \label{lem:Frobenius_forms}
The equations \eqref{eq:cxfrob} and \eqref{eq:realfrob} determine symmetric Frobenius forms such that $R\beta=1$.
\end{lemma}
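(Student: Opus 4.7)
The plan is to reduce the verification to a single matrix algebra summand and then exhibit an explicit dual basis. Both the multiplication and the proposed Frobenius form $\varepsilon$ respect the direct sum decompositions in \eqref{eq:complex_matrix_algebra} and \eqref{eq:real_matrix_algebra}, so it suffices to check on each summand $M_n(D)$ (with $D=\Cb$ in the complex case, and $D\in\{\Rb,\Cb_\Rb,\Hb_\Rb\}$ in the real case) that the form $\varepsilon_c(a)=c\,\Real\Tr(a)$ with $c=R|D|n$ is symmetric, non-degenerate with the desired inverse, and that its $\beta$ equals $c^{-1}|D|n\cdot I = R^{-1}\cdot I$. Summing over blocks then gives $R\beta=1_A$.

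For symmetry, over $\Rb$ and $\Cb$ this is just $\Tr(ab)=\Tr(ba)$. For $D=\Hb_\Rb$ the difference $\Tr(ab)-\Tr(ba)=\sum_{i,k}[a_{ik},b_{ki}]$ is a sum of commutators in $\Hb$, and since $\Real(pq)=\Real(qp)$ for any $p,q\in\Hb$, these commutators have zero real part. Hence $\varepsilon\circ m$ is symmetric in all cases.

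For non-degeneracy and the formula for $\beta$, I would choose the $\Rb$-basis $\{u_\sigma E_{ij}\}$ of $M_n(D)$ where $\{E_{ij}\}$ are the matrix units and $\{u_\sigma\}$ is an $\Rb$-basis of $D$; explicitly $\{1\}$, $\{1,\hat\imath\}$, $\{1,\hat\imath,\hat\jmath,\hat k\}$ in the three cases. The $\Rb$-bilinear form $(p,q)\mapsto\Real(pq)$ on $D$ has Gram matrix $\mathrm{diag}(1)$, $\mathrm{diag}(1,-1)$, $\mathrm{diag}(1,-1,-1,-1)$ respectively, so a dual basis $\{u^\sigma\}$ exists and satisfies $\sum_\sigma u_\sigma u^\sigma = |D|$ by direct summation. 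Combining this with $\Tr(E_{ij}E_{lk})=\delta_{jl}\delta_{ik}$, a short calculation shows that the dual basis of $\{u_\sigma E_{ij}\}$ with respect to $B^{-1}(a,b)=c\,\Real\Tr(ab)$ is $\{c^{-1}u^\sigma E_{ji}\}$, which establishes non-degeneracy and identifies
\[
B = \sum_{i,j,\sigma} u_\sigma E_{ij}\otimes c^{-1}u^\sigma E_{ji}.
\]

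It then remains to compute $\beta=m(B)$. Using $E_{ij}E_{ji}=E_{ii}$ and $\sum_\sigma u_\sigma u^\sigma=|D|$, the product collapses to
\[
\beta = c^{-1}\sum_{i,j,\sigma} (u_\sigma u^\sigma)\,E_{ii} = c^{-1}|D|n\cdot I.
\]
Substituting $c=R|D|n$ (with $|D|=1$ understood in the complex case, where the convention just gives $c=Rn$) yields $\beta=R^{-1}\cdot I$ on each summand, hence $R\beta=1_A$ after summing. The main technical care is in handling $D=\Hb_\Rb$: both the symmetry argument and the dual-basis identity $\sum u_\sigma u^\sigma=|D|$ rely on the real inner product being the correct normalisation rather than the full Hermitian one, so it is this bookkeeping with $|D|$ that makes the formula \eqref{eq:realfrob} (with the factor $|D_i|$ present) work out to exactly $R\beta=1$.
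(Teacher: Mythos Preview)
Your proof is correct and follows essentially the same route as the paper: reduce to a single summand $M_n(D)$, use the basis of matrix units tensored with an $\Rb$-basis of $D$, exhibit the explicit dual basis (the paper writes it as $w_i^\ast e_{ml}$, which coincides with your $u^\sigma E_{ji}$ since the dual basis of $\{1,\hat\imath,\hat\jmath,\hat k\}$ under $\Real(pq)$ is the conjugate basis), and compute $\beta$ directly. The only cosmetic difference is that the paper argues non-degeneracy in the real case by a separate trick (multiplying by imaginary units to recover the full trace), whereas you get it for free by exhibiting the dual basis; your packaging via $\sum_\sigma u_\sigma u^\sigma=|D|$ is the same identity as the paper's observation that $w_iw_i^\ast=1$ summed over $|D|$ terms.
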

\begin{proof} That \eqref{eq:cxfrob} determines a symmetric Frobenius form follows from the fact that $\Tr(xy)$ is a non-degenerate symmetric bilinear form on $\Mb_{n}(\Cb)$. 

For \eqref{eq:realfrob} there are three separate cases to handle: $\Mb_{n}(D)$ for $D=\Rb$, $\Cb_{\Rb}$ and $\Hb_{\Rb}$. The bilinear form $\Real \Tr (xy) $ reduces to $\Tr(xy)$ in the first case and this is non-degenerate on $\Mb_{n}(\Rb)$. In the $D=\Cb_\Rb$ case, $\Real\Tr(xy)=0$ and $\Real\Tr(x(\hat{\imath}y))=0$ implies that $\Tr(xy)=0$. So $\Real\Tr(xy)=0$ for all $y\in\Mb_{n}(\Cb_\Rb)$ implies that $x=0$. Thus $\Real\Tr(xy)$ is a non-degenerate form. Finally, a similar proof works for $D=\Hb_\Rb$. In all these cases the bilinear form determined by $\Real\Tr$ is symmetric.

Let $k=\Cb$. A basis for \eqref{eq:complex_matrix_algebra} is given by elementary matrices $\lbrace e_{lm}^{i}\rbrace_{l,m=1,n_i}^{i=1,N}$ satisfying $\left(e_{lm}^{i}\right)_{rs}=\delta_{lr}\delta_{ms}$. Then
\begin{align}
B=\frac{1}{R}\sum_{i,lm}\frac{1}{n_i}\,e_{lm}^{i}\otimes e_{ml}^i,
\end{align}
as can be verified by applying the identity $B^{-1}B=1$ to the above expression and using equation \eqref{eq:cxfrob}. Let $1=\oplus_i 1_i$; noticing $\sum_{lm}e^{i}_{lm}e^i_{ml}=n_i 1_i$, it is straightforward to conclude that $\beta=m(B)=R^{-1}1$. 

Suppose now that $k=\Rb$ and let $A$ be as in \eqref{eq:real_matrix_algebra}. Choose as a basis for the $i$-th component of $A$ either $\lbrace e_{lm}^i\rbrace$, $\lbrace e_{lm}^i, \hat{\imath}\,e_{lm}^i\rbrace$ or $\lbrace e_{lm}^i, \hat{\imath}\,e_{lm}^i,\hat{\jmath}\,e_{lm}^i,\hat{k}\,e_{lm}^i\rbrace$ according to $D_i=\Rb$, $\Cb_{\Rb}$ or $\Hb_{\Rb}$, respectively. The element $B$ associated with \eqref{eq:realfrob} will then take the form
\begin{align}
B=\frac{1}{R}\sum_{i,lm}\sum_{w_i}\frac{1}{|D_i|n_i}\, w_i\,e_{lm}^{i}\otimes_{\Rb} w_i^{\ast}\,e_{ml}^i, \hspace{4mm}
w_i=
\begin{cases}
1 &(D_i=\Rb)\\
1,\hat{\imath} &(D_i=\Cb_{\Rb})\\
1,\hat{\imath},\hat{\jmath},\hat{k} &(D_i=\Hb_{\Rb})
\end{cases}.
\end{align}
Since the product $w_iw_i^{\ast}=1$ for all $i$ then $\sum_{lm,w_i} w_i\,e_{lm}^{i} w_i^{\ast}\,e_{ml}^i=n_i|D_i|1_i$. The identity $m(B)=R^{-1}1$ is therefore satisfied.
\end{proof}

The partition function for a surface can now be calculated for these examples. Let  $\Sigma_g$ denote an oriented surface of genus $g$.
Gluing two triangles together gives the partition function \eqref{eq:Pach1} of the disk with four boundary edges labelled with states $a,b,c,d$ which is equal to $\varepsilon(e_a\cdot e_b\cdot e_c\cdot e_d)$. Gluing these boundary edges to make the sphere $\Sigma_0=S^2$ by identifying the states $a,d$ and $b,c$ results in the partition function
\begin{equation}\label{eq:sphere} Z(\Sigma_0)=R^3\,\varepsilon(e_a\cdot e_b\cdot e_c\cdot e_d)B^{ad}B^{bc}=R\,\varepsilon(1).\end{equation}
Gluing opposite edges results in the torus 
\begin{equation}\label{eq:torus} Z(\Sigma_1)=R\,\varepsilon(e_a\cdot e_b\cdot e_c\cdot e_d)B^{ac}B^{bd}=R\,\varepsilon(z)\end{equation}
with $z=e_a\cdot e_b\cdot e_c\cdot e_d\,B^{ac}B^{bd}$.
The surface $\Sigma_g$ for $g>0$ can be constructed from a disk with $4g$ boundary edges as presented in figure~\ref{fig:4gpoly}. This results in the partition function
\begin{align} \label{eq:invariant}
Z(\Sigma_g)=R\,\varepsilon\left( z^g\right)
\end{align}
valid for all $g$. Although a specific orientation was picked when constructing expression \eqref{eq:invariant}, the result is actually independent of orientation. Such a symmetry of the partition function is to be expected as it is easy to show orientation-reversing homeomorphisms exist for closed surfaces. Alternatively, this invariance can be proved directly through the partition function. For example, for the torus the two possible partition functions corresponding to two different orientations are given by expression \eqref{eq:torus} and $Z'(\Sigma_1)=R\varepsilon(e_d\cdot e_c\cdot e_b\cdot e_a)B^{ac}B^{bd}$. By relabelling $(d,c,b,a) \to (a,b,c,d)$ and using the bilinear form symmetry it is established the two invariants are indeed equal.

\begin{figure}
\centering
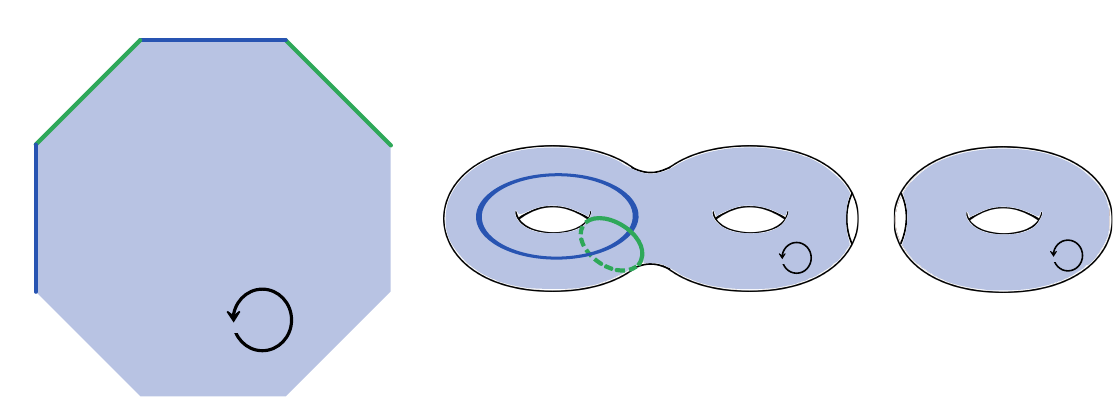  
\caption{\emph{Building $\Sigma_g$.}  A genus $g$ surface is constructed from a disk with $4g$ boundary edges (internal edges are omitted). Edges are identified following the pattern shown on the right: $a \leftrightarrow c$, $b \leftrightarrow d$.  On the left, it can be seen how the glued edges give rise to curves on the surface.} 
\label{fig:4gpoly}
\end{figure} 

The classification of FHK state sum models gives an explicit expression for $Z(\Sigma_g)$. This is based on the following calculations for the partition function in the case of simple algebras. For $A=\Mb_n(\Cb)$, choose as a basis the elementary matrices $\lbrace e_{lm}\rbrace_{l,m=1,n}$. Then for a Frobenius form \eqref{eq:cxfrob} the element $z$ is given by $z=R^{-2}n^{-2}\sum_{lm,rs}e_{lm}e_{rs}e_{ml}e_{sr}=R^{-2}n^{-2}1$. This gives the partition function 
\begin{align}\label{eq:partitionfunction}
Z(\Sigma_g, \Mb_n(\Cb))=R^{2-2g}n^{2-2g},
\end{align}
a result also found in \cite{Lauda}. The same conclusion holds for $\Mb_{n}(\Rb)$, now with $R\in\Rb$. For the case of $\Mb_{n}(\Cb_{\Rb})$, the element $z$ again takes the form $z=R^{-2}n^{-2}1$ but it produces a new partition function
\begin{align}
Z(\Sigma_g, \Mb_n(\Cb_{\Rb}))=2R^{2-2g}n^{2-2g}
\end{align}
due to the extra factor of $|\Cb_{\Rb}|=2$ present in the Frobenius form \eqref{eq:realfrob}. Further details of this calculation are explained in the more general example~\ref{ex:complex_crossing}.

Finally, for $\Mb_n(\Hb_{\Rb})$ a calculation shows that  $z=4^{-1}R^{-2}n^{-2}1$. Full details can be found in example~\ref{ex:quaternions_crossing}. The partition function reads
\begin{align}
Z(\Sigma_g, \Mb_n(\Hb_{\Rb}))=2^{2-2g}R^{2-2g}n^{2-2g}.
\end{align}
Given the information gathered above, the most general form of an invariant from a symmetric Frobenius algebra can be stated. 
\begin{theorem} \label{lem:inv} 
Let $A$ be a symmetric special Frobenius algebra over the field $k=\Cb$ or $\Rb$, as in theorem~\ref{theo:state_sum_algebra}. The topological invariant $Z(\Sigma_g)$ constructed from $A$ and an orientable surface $\Sigma_g$ is 
\begin{align} 
Z(\Sigma_g)=R^{2-2g}\sum\limits_{i=1}^N  n_i^{2-2g}
\end{align}
if $k=\Cb$ or
\begin{align} 
Z(\Sigma_g)=R^{2-2g}\sum\limits_{i=1}^N f(i,g) n_i^{2-2g}, \hspace{4mm}
f(i,g)=\begin{cases} 1 & (D_i = \Rb) \\ 2 & (D_i=\Cb_{\Rb}) \\ 2^{2-2g} & (D_i=\Hb_{\Rb})\end{cases}
\end{align}
if $k=\Rb$.
\end{theorem}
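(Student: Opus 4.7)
The plan is to reduce the theorem to the simple-factor computations already carried out in the preceding text. By theorem~\ref{theo:state_sum_algebra} together with the classification quoted in \eqref{eq:complex_matrix_algebra}--\eqref{eq:real_matrix_algebra}, the algebra decomposes as $A=\bigoplus_{i=1}^N A_i$ with each $A_i$ a simple matrix algebra over the appropriate division ring. I would first show that this decomposition passes through every ingredient of the state sum model: the Frobenius forms \eqref{eq:cxfrob} and \eqref{eq:realfrob} are by construction direct sums of their restrictions $\varepsilon_i=\varepsilon|_{A_i}$, so the pairing $B^{-1}=\varepsilon\circ m$ is block-diagonal, its inverse splits as $B=\sum_i B_i$ with $B_i\in A_i\otimes A_i$, and, because $A_i\cdot A_j=0$ for $i\neq j$, the distinguished element splits as $z=\sum_i z_i$ with each $z_i$ computed by the same formula inside $A_i$.

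From orthogonality of the ideals and additivity of $\varepsilon$ I would then obtain $z^g=\sum_i z_i^g$ for all $g\geq 0$, and hence via \eqref{eq:invariant}
\begin{equation}
Z(\Sigma_g)=R\,\varepsilon(z^g)=\sum_{i=1}^N R\,\varepsilon_i(z_i^g)=\sum_{i=1}^N Z(\Sigma_g,A_i).
\end{equation}
It then suffices to substitute the values computed in the paragraphs immediately preceding the theorem, namely $Z(\Sigma_g,\Mb_{n_i}(\Cb))=Z(\Sigma_g,\Mb_{n_i}(\Rb))=R^{2-2g}n_i^{2-2g}$, $Z(\Sigma_g,\Mb_{n_i}(\Cb_\Rb))=2R^{2-2g}n_i^{2-2g}$, and $Z(\Sigma_g,\Mb_{n_i}(\Hb_\Rb))=2^{2-2g}R^{2-2g}n_i^{2-2g}$. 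These are exactly the four cases parametrised by $f(i,g)$ in the statement, so the two displayed formulas follow immediately.

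The main non-formal ingredient behind this plan is the quaternionic calculation: the factor $|D_i|=4$ in the Frobenius form combined with the non-commutative basis $\{1,\hat{\imath},\hat{\jmath},\hat{k}\}$ yields $z_i=(4R^2 n_i^2)^{-1}\cdot 1$ rather than the naive $(R^2 n_i^2)^{-1}\cdot 1$, and it is this extra $4^{-1}$ that produces the $g$-dependent weight $2^{2-2g}$ instead of a constant. I anticipate this to be the only step requiring care; for $D_i\in\{\Rb,\Cb_\Rb\}$ the internal factors of $|D_i|$ cancel cleanly between $B$ and $\varepsilon$, leaving only the rank factor $n_i^{2-2g}$ together with the constant weight $1$ or $2$. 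The remaining verification that summing these simple-factor contributions does indeed agree with the stated formulas is then purely formal.
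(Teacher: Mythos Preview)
Your proposal is correct and matches the paper's approach exactly: the paper computes $Z(\Sigma_g,A_i)$ for each simple factor in the paragraphs preceding the theorem and then states the result for a general $A$ without further comment, implicitly using the block-diagonality of $B^{-1}$, $B$, and hence $z$ under the decomposition \eqref{eq:complex_matrix_algebra} or \eqref{eq:real_matrix_algebra}. If anything, you are more explicit than the paper in spelling out the additivity step $Z(\Sigma_g)=\sum_i Z(\Sigma_g,A_i)$, which the paper leaves to the reader.
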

Another example of a Frobenius algebra is given by the complex group algebra. Recall an algebra can be built from any finite group $G$ by taking formal linear combinations of the group elements. This algebra, denoted $\Cb G$, has elements $f = \sum_{h \in G} f(h)h$, $f(h)\in \Cb$ and product defined according to
\begin{align} \label{eq:prod_group}
(f \cdot f') (h) = \sum_{l \in G}f(l)f'(l^{-1}h).
\end{align}
A Frobenius form is $\varepsilon(f)=R|G| f(1)$, where $|G|$ is the order of the group. This form is the unique symmetric special Frobenius form such that $R\beta=1$.
 The Peter-Weyl decomposition~\cite{Dieck} gives an isomorphism with a complex matrix algebra satisfying the conditions of theorem~\ref{theo:state_sum_algebra}. The general form of the invariant associated with the group algebra is therefore
\begin{align}\label{eq:group_inv}
Z(\Sigma_g)=R^{2-2g}\sum_{i \in I}(\dim i)^{2-2g},
\end{align}  
where each $i$ labels an irreducible group representation, a result that is given for a Lie group in \cite{witten}.  Expression \eqref{eq:group_inv} agrees with the results of \cite{Fukuma} when $R=1$.

\section{Planar and spherical state sum models} \label{sec:diagram}

\subsection{Planar models}

A more general algebraic framework can be used for state sum models if a more sophisticated method to define the weight of the model is employed. In this generalisation some of the conditions on the data of a naive state sum model are relaxed. This section describes this generalisation in the simplest case of a planar model, which is a state sum model on a portion of the plane $\Rb^2$.

The new framework uses a diagrammatic calculus to determine the partition function. The first step is to construct the graph dual to the triangulation. A distinguished role is played by the horizontal direction and it is assumed that the edges of the dual graph are not horizontal at the boundary of the manifold or at the vertices of the graph. Then an amplitude for this graph is determined by associating a local factor for each vertex and also a local factor for each point on a line at which the vertical height is a maximum or minimum.

The algebraic data is again non-degenerate $C$, $B$ and $R$, the generalisation being the replacement of the symmetry requirements \eqref{eq:C_cycle} and \eqref{eq:metric} on $C_{abc}$ and $B^{ab}$ with the one equation
\begin{align}\label{eq:BCequation}
C_{abc}\,B^{cd}=B^{de}C_{eab}.
\end{align}
The matrix $B^{ab}$ is no longer required to be symmetric. Since it is non-degenerate it has an inverse $B_{ab}$ defined by \eqref{eq:snake}.
Using the inverse, equation \eqref{eq:BCequation} can equivalently be written as either of the two equations
\begin{align} \label{eq:gen_cycle}
C_{eab}\,B_{dc}B^{de}=C_{abc}=C_{bce}\,B_{ad}B^{ed}.
\end{align}
Note that if $B$ is symmetric, condition \eqref{eq:gen_cycle} reduces to cyclicity as presented in \eqref{eq:C_cycle}.

First it is explained how the the building blocks, the maps $C$, $B$ and $B^{-1}$, are written in diagrammatic form. This is depicted below:
$$
\begingroup%
  \makeatletter%
  \providecommand\color[2][]{%
    \errmessage{(Inkscape) Color is used for the text in Inkscape, but the package 'color.sty' is not loaded}%
    \renewcommand\color[2][]{}%
  }%
  \providecommand\transparent[1]{%
    \errmessage{(Inkscape) Transparency is used (non-zero) for the text in Inkscape, but the package 'transparent.sty' is not loaded}%
    \renewcommand\transparent[1]{}%
  }%
  \providecommand\rotatebox[2]{#2}%
  \ifx\svgwidth\undefined%
    \setlength{\unitlength}{372.1612793bp}%
    \ifx\svgscale\undefined%
      \relax%
    \else%
      \setlength{\unitlength}{\unitlength * \real{\svgscale}}%
    \fi%
  \else%
    \setlength{\unitlength}{\svgwidth}%
  \fi%
  \global\let\svgwidth\undefined%
  \global\let\svgscale\undefined%
  \makeatother%
  \begin{picture}(1,0.11204979)%
    \put(0,0){\includegraphics[width=\unitlength]{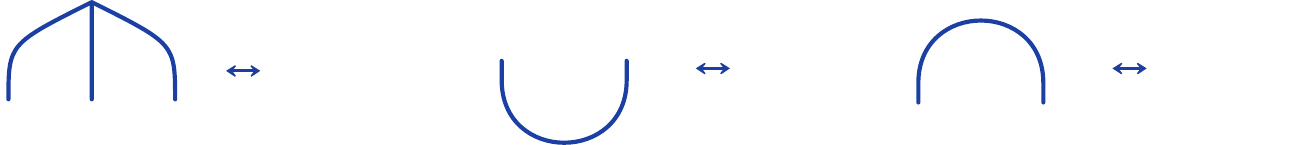}}%
    \put(-0.00076621,0.00527968){\color[rgb]{0,0,0}\makebox(0,0)[lb]{\smash{$a$}}}%
    \put(0.06456274,0.00512119){\color[rgb]{0,0,0}\makebox(0,0)[lb]{\smash{$b$}}}%
    \put(0.12882714,0.00563156){\color[rgb]{0,0,0}\makebox(0,0)[lb]{\smash{$c$}}}%
    \put(0.22524845,0.04972964){\color[rgb]{0,0,0}\makebox(0,0)[lb]{\smash{$C_{abc}$}}}%
    \put(0.33144624,0.0523929){\color[rgb]{0,0,0}\makebox(0,0)[lb]{\smash{,}}}%
    \put(0.3821534,0.07947112){\color[rgb]{0,0,0}\makebox(0,0)[lb]{\smash{$a$}}}%
    \put(0.47650841,0.08079993){\color[rgb]{0,0,0}\makebox(0,0)[lb]{\smash{$b$}}}%
    \put(0.5900319,0.05210918){\color[rgb]{0,0,0}\makebox(0,0)[lb]{\smash{$B^{ab}$}}}%
    \put(0.65258498,0.05184316){\color[rgb]{0,0,0}\makebox(0,0)[lb]{\smash{,}}}%
    \put(0.70546884,0.00235776){\color[rgb]{0,0,0}\makebox(0,0)[lb]{\smash{$a$}}}%
    \put(0.80141362,0.00255541){\color[rgb]{0,0,0}\makebox(0,0)[lb]{\smash{$b$}}}%
    \put(0.91247282,0.05210918){\color[rgb]{0,0,0}\makebox(0,0)[lb]{\smash{$B_{ab}$}}}%
    \put(0.97690825,0.05116912){\color[rgb]{0,0,0}\makebox(0,0)[lb]{\smash{.}}}%
  \end{picture}%
\endgroup%

$$
 The defining relation \eqref{eq:snake} between $B$ and its inverse is translated into the snake identity
$$
\hspace{15mm}
\begingroup%
  \makeatletter%
  \providecommand\color[2][]{%
    \errmessage{(Inkscape) Color is used for the text in Inkscape, but the package 'color.sty' is not loaded}%
    \renewcommand\color[2][]{}%
  }%
  \providecommand\transparent[1]{%
    \errmessage{(Inkscape) Transparency is used (non-zero) for the text in Inkscape, but the package 'transparent.sty' is not loaded}%
    \renewcommand\transparent[1]{}%
  }%
  \providecommand\rotatebox[2]{#2}%
  \ifx\svgwidth\undefined%
    \setlength{\unitlength}{204.21323242bp}%
    \ifx\svgscale\undefined%
      \relax%
    \else%
      \setlength{\unitlength}{\unitlength * \real{\svgscale}}%
    \fi%
  \else%
    \setlength{\unitlength}{\svgwidth}%
  \fi%
  \global\let\svgwidth\undefined%
  \global\let\svgscale\undefined%
  \makeatother%
  \begin{picture}(1,0.26728199)%
    \put(0,0){\includegraphics[width=\unitlength]{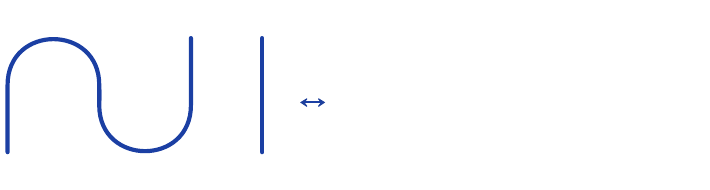}}%
    \put(-0.00139637,0.00403609){\color[rgb]{0,0,0}\makebox(0,0)[lb]{\smash{$a$}}}%
    \put(0.25583694,0.23586379){\color[rgb]{0,0,0}\makebox(0,0)[lb]{\smash{$b$}}}%
    \put(0.35507833,0.23665762){\color[rgb]{0,0,0}\makebox(0,0)[lb]{\smash{$b$}}}%
    \put(0.35746016,0.00483003){\color[rgb]{0,0,0}\makebox(0,0)[lb]{\smash{$a$}}}%
    \put(0.50191845,0.10878811){\color[rgb]{0,0,0}\makebox(0,0)[lb]{\smash{$B_{ac}B^{cb}=\delta_{a}^b$}}}%
    \put(0.30267889,0.10962891){\color[rgb]{0,0,0}\makebox(0,0)[lb]{\smash{$=$}}}%
  \end{picture}%
\endgroup%

\label{fig:inverse_metric}
$$
Either side of \eqref{eq:BCequation} can be taken as the definition of $C_{ab}{}^{d}$, the components of a multiplication map $m$ in equation \eqref{eq:multiplicationmap}. The diagrammatic counterpart is below. Similar expressions are used to define a vertex with two or three legs pointing upwards.
$$
\centering
\begingroup%
  \makeatletter%
  \providecommand\color[2][]{%
    \errmessage{(Inkscape) Color is used for the text in Inkscape, but the package 'color.sty' is not loaded}%
    \renewcommand\color[2][]{}%
  }%
  \providecommand\transparent[1]{%
    \errmessage{(Inkscape) Transparency is used (non-zero) for the text in Inkscape, but the package 'transparent.sty' is not loaded}%
    \renewcommand\transparent[1]{}%
  }%
  \providecommand\rotatebox[2]{#2}%
  \ifx\svgwidth\undefined%
    \setlength{\unitlength}{275.20251465bp}%
    \ifx\svgscale\undefined%
      \relax%
    \else%
      \setlength{\unitlength}{\unitlength * \real{\svgscale}}%
    \fi%
  \else%
    \setlength{\unitlength}{\svgwidth}%
  \fi%
  \global\let\svgwidth\undefined%
  \global\let\svgscale\undefined%
  \makeatother%
  \begin{picture}(1,0.20128155)%
    \put(0,0){\includegraphics[width=\unitlength]{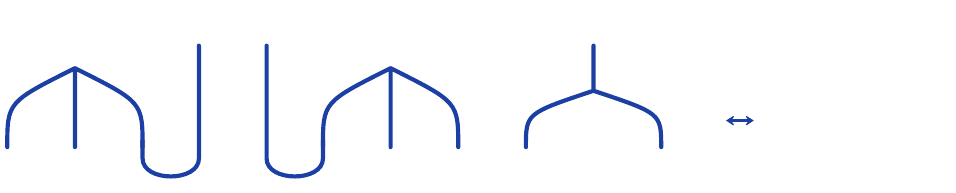}}%
    \put(-0.00103616,0.00299496){\color[rgb]{0,0,0}\makebox(0,0)[lb]{\smash{$a$}}}%
    \put(0.06965996,0.0035841){\color[rgb]{0,0,0}\makebox(0,0)[lb]{\smash{$b$}}}%
    \put(0.20044757,0.17855681){\color[rgb]{0,0,0}\makebox(0,0)[lb]{\smash{$d$}}}%
    \put(0.27114364,0.17796767){\color[rgb]{0,0,0}\makebox(0,0)[lb]{\smash{$d$}}}%
    \put(0.4019313,0.00299496){\color[rgb]{0,0,0}\makebox(0,0)[lb]{\smash{$a$}}}%
    \put(0.47203823,0.0035841){\color[rgb]{0,0,0}\makebox(0,0)[lb]{\smash{$b$}}}%
    \put(0.54332344,0.0035841){\color[rgb]{0,0,0}\makebox(0,0)[lb]{\smash{$a$}}}%
    \put(0.68294807,0.00417325){\color[rgb]{0,0,0}\makebox(0,0)[lb]{\smash{$b$}}}%
    \put(0.612252,0.17796776){\color[rgb]{0,0,0}\makebox(0,0)[lb]{\smash{$d$}}}%
    \put(0.83082062,0.06779968){\color[rgb]{0,0,0}\makebox(0,0)[lb]{\smash{$C_{ab}{}^d$}}}%
    \put(0.2302863,0.07204744){\color[rgb]{0,0,0}\makebox(0,0)[lb]{\smash{$=$}}}%
    \put(0.50190606,0.07170235){\color[rgb]{0,0,0}\makebox(0,0)[lb]{\smash{$=$}}}%
  \end{picture}%
\endgroup%

$$ 
Equation \eqref{eq:gen_cycle} can now be easily described -- note that keeping track of the index order is essential:
$$
\hspace{1mm}
\begingroup%
  \makeatletter%
  \providecommand\color[2][]{%
    \errmessage{(Inkscape) Color is used for the text in Inkscape, but the package 'color.sty' is not loaded}%
    \renewcommand\color[2][]{}%
  }%
  \providecommand\transparent[1]{%
    \errmessage{(Inkscape) Transparency is used (non-zero) for the text in Inkscape, but the package 'transparent.sty' is not loaded}%
    \renewcommand\transparent[1]{}%
  }%
  \providecommand\rotatebox[2]{#2}%
  \ifx\svgwidth\undefined%
    \setlength{\unitlength}{414.00576172bp}%
    \ifx\svgscale\undefined%
      \relax%
    \else%
      \setlength{\unitlength}{\unitlength * \real{\svgscale}}%
    \fi%
  \else%
    \setlength{\unitlength}{\svgwidth}%
  \fi%
  \global\let\svgwidth\undefined%
  \global\let\svgscale\undefined%
  \makeatother%
  \begin{picture}(1,0.09850453)%
    \put(0,0){\includegraphics[width=\unitlength]{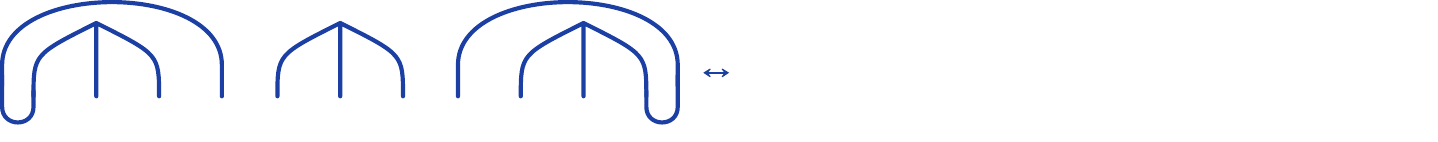}}%
    \put(0.18716547,0.00216457){\color[rgb]{0,0,0}\makebox(0,0)[lb]{\smash{$a$}}}%
    \put(0.23026896,0.00199084){\color[rgb]{0,0,0}\makebox(0,0)[lb]{\smash{$b$}}}%
    \put(0.27411648,0.00205801){\color[rgb]{0,0,0}\makebox(0,0)[lb]{\smash{$c$}}}%
    \put(0.31107714,0.0021924){\color[rgb]{0,0,0}\makebox(0,0)[lb]{\smash{$a$}}}%
    \put(0.3563567,0.00215307){\color[rgb]{0,0,0}\makebox(0,0)[lb]{\smash{$b$}}}%
    \put(0.4000697,0.0022874){\color[rgb]{0,0,0}\makebox(0,0)[lb]{\smash{$c$}}}%
    \put(0.29154251,0.04504916){\color[rgb]{0,0,0}\makebox(0,0)[lb]{\smash{$=$}}}%
    \put(0.521491,0.04118448){\color[rgb]{0,0,0}\makebox(0,0)[lb]{\smash{$C_{eab}B_{dc}B^{de}=C_{abc}=C_{bce}B_{ad}B^{ed}$}}}%
    \put(0.91762081,0.04118448){\color[rgb]{0,0,0}\makebox(0,0)[lb]{\smash{.}}}%
    \put(0.06159402,0.00253767){\color[rgb]{0,0,0}\makebox(0,0)[lb]{\smash{$a$}}}%
    \put(0.10469751,0.00236394){\color[rgb]{0,0,0}\makebox(0,0)[lb]{\smash{$b$}}}%
    \put(0.14854502,0.00243111){\color[rgb]{0,0,0}\makebox(0,0)[lb]{\smash{$c$}}}%
    \put(0.16594039,0.04504916){\color[rgb]{0,0,0}\makebox(0,0)[lb]{\smash{$=$}}}%
  \end{picture}%
\endgroup%

\label{fig:diagram_cycle}
$$ 

The data $C_{abc}$ and $B^{ab}$ together with the vertex amplitude $R\in k$ determine a new type of state sum model called a diagrammatic state sum model. This is a generalisation of the naive state sum model construction of \S\ref{sec:lattice_tft}.
Starting with a triangulation of a compact subset  $M\subset\Rb^2$, a state sum model for $M$ is constructed from the planar graph $G$ formed by the dual vertices and dual edges. Given fixed states on the boundary edges of $M$, the graph is evaluated to give the quantum amplitude $|G|\in k$. Simple examples for the $M$ consisting of one and two triangles are shown in figure \ref{fig:diag-ssm}.

\begin{figure}
\centering
\begin{subfigure}[t!]{0.47\textwidth}
                	\centering
		\vspace{6mm}
		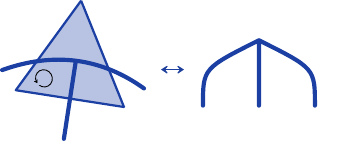 
		\vspace{4mm}
		\caption{\emph{Triangle amplitude.} The analogue of figure~\ref{fig:tri_label}. The diagram is determined by the dual graph with a choice of legs pointing upwards or downwards.}
\end{subfigure}
\hspace{5mm}
\begin{subfigure}[t!]{0.47\textwidth}
                	\centering
		\vspace{2mm}
		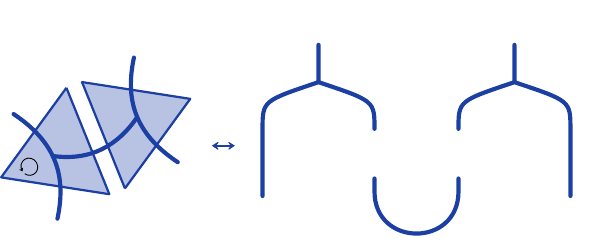
		\vspace{2.4mm}
		\caption{\emph{Gluing triangles.} The lack of rotational symmetry -- since the allowed homeomorphisms must preserve the boundary -- means that matrix $B$ is not assumed to be symmetric.}
		\label{fig:diag-ssm-glue}
\end{subfigure}
\caption{ }
\label{fig:diag-ssm}
\end{figure} 

Note that for the state sum model to be well-defined, the dual edges on the boundary have to be pointing either upwards or downwards. Due to the identities for $C$ and $B$, the interior of the graph can be moved by a homeomorphism (fixing the boundary) to any convenient graph in order to construct the required algebraic expression. Thus on figure \ref{fig:diag-ssm-glue} the left-hand vertex has been perturbed so that both vertices correspond to the multiplication map.
Note that the plane $\Rb^2$ is considered to have a standard orientation, so that $M$ is an oriented manifold. The general formula for the partition function of this diagrammatic state sum model is
\begin{equation}Z(M)=R^V |G|\label{eq:diag-partition}\end{equation}
with $V$ the number of interior vertices.

Now the Pachner moves are introduced. A Pachner move preserves the boundary of a triangulation and it is assumed that the corresponding dual edges do not change in a neighbourhood of the boundary, so remain either upward or downward-pointing.
\begin{definition} A planar state sum model is a diagrammatic state sum model for any compact $M\subset \Rb^2$ satisfying the Pachner moves.
\end{definition}

The planar state sum models depend on the details of the diagram in the neighbourhood of the boundary. Thus the partition function of a disk is no longer symmetric under cyclic permutations of the boundary edges, but has a more refined mapping property that generalises \eqref{eq:gen_cycle}. Note that this is why the diagrammatic state sum models escape the conclusion of \S\ref{sec:lattice_tft} that $B$ is symmetric for the naive models. These mappings of boundaries and the boundary data are not studied further in this paper. It will be assumed that any mapping of surfaces is the identity mapping in a neighbourhood of the boundary. 

The result below is a refinement of theorem~\ref{theo:state_sum_algebra} and its proof develops the properties of the graphical calculus.
\begin{theorem} \label{theo:diagram}
Non-degenerate diagrammatic state sum model data $(C,B,R)$ determine a planar state sum if and only if the multiplication map $m$, the bilinear form $B$ and the distinguished element $\beta = m(B)$ determine on $A$ the structure of a special Frobenius algebra with identity element $1=R\beta$.
\end{theorem}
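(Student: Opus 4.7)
The plan is to closely follow the proof of theorem \ref{theo:state_sum_algebra}, replacing the symmetry of $B$ and the full cyclic symmetry of $C$ by the single weaker relation \eqref{eq:BCequation}, and deducing the structure of a (not necessarily symmetric) special Frobenius algebra. As before, the two directions are established by matching each Pachner move with one of the algebraic axioms, now using the graphical identities (snake and generalised cyclicity) listed just before the theorem.

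For the forward direction, I would first check that the 2-2 Pachner move is equivalent to associativity of $m$. The argument from theorem \ref{theo:state_sum_algebra} identifying the move with $B^{-1}((e_a \cdot e_b)\cdot e_c, e_d) = B^{-1}(e_a \cdot(e_b \cdot e_c), e_d)$ uses only the definition $C_{ab}{}^c = C_{abd}B^{dc}$ together with non-degeneracy of $B^{-1}$, and so applies verbatim. Next I would establish the Frobenius property $B^{-1}(x\cdot y, z) = B^{-1}(x, y\cdot z)$ by a direct index computation: the snake identity immediately gives $B^{-1}(e_a\cdot e_b, e_c) = C_{ab}{}^d B_{dc} = C_{abc}$, and using \eqref{eq:BCequation} to move an index through $B$ one also obtains $B^{-1}(e_a, e_b\cdot e_c) = B_{ad}C_{bc}{}^d = C_{abc}$. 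Defining $\varepsilon(x) = B^{-1}(x, R\beta)$ then produces a (possibly non-symmetric) Frobenius form, once $R\beta$ is shown to be the unit.

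The 1-3 Pachner move is handled essentially as in \eqref{eq:Pachner_13}, yielding the left unit relation $(R\beta)\cdot e_b = e_b$; the right unit relation follows by applying \eqref{eq:gen_cycle} on the same diagram, or equivalently by placing the move on a reflected triangulation. This identifies $R\beta$ as the two-sided unit $1$, so $A$ is a special Frobenius algebra. For the converse, given such a Frobenius algebra I set $B_{ab} = \varepsilon(e_a \cdot e_b)$ with $B^{ab}$ its inverse and $C_{abc} = \varepsilon(e_a \cdot e_b \cdot e_c)$; the Frobenius axiom gives \eqref{eq:BCequation}, associativity gives the 2-2 move, and unitality with $R\beta = 1$ gives the 1-3 move. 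Non-degeneracy of $C$ follows from non-degeneracy of $B^{-1}$ together with the existence of a unit.

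The main obstacle is diagrammatic rather than algebraic: because the legs of the dual graph may point upwards or downwards and $B$ is no longer symmetric, one must verify carefully that the dual graphs on either side of a Pachner move can be isotoped, while keeping the boundary legs with their fixed orientations, into the canonical forms evaluated in \eqref{eq:Pach1} and \eqref{eq:Pachner_13}. This uses only the snake identity and the modified cyclicity \eqref{eq:gen_cycle}, both set up at the start of this section, but it is the principal place where the weaker symmetry hypotheses have to be accommodated; once these graphical reductions are justified, the algebraic identification of the moves with associativity and the unit property proceeds exactly as in the symmetric case.
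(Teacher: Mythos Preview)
Your proposal is correct and follows essentially the same route as the paper: the 2-2 move is reduced diagrammatically (using the snake identity and \eqref{eq:gen_cycle}) to associativity of $m$, relation \eqref{eq:BCequation} gives $B^{-1}(x\cdot y,z)=B^{-1}(x,y\cdot z)$ and hence a Frobenius form $\varepsilon=B^{-1}(\cdot,1)$, and the 1-3 move forces $R\beta=1$; the converse is identical. One small remark: your ``reflected triangulation'' argument for the right unit is delicate since reflection reverses orientation, but it is unnecessary --- once you have the Frobenius property and a left unit, $B^{-1}(x\cdot R\beta,y)=B^{-1}(x,R\beta\cdot y)=B^{-1}(x,y)$ and non-degeneracy give the right unit immediately, which is implicitly what the paper uses.
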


\begin{proof}
The proof of theorem~\ref{theo:state_sum_algebra} will be followed very closely. The essential difference relies on the translation of Pachner moves into the new diagrammatic model. 

Suppose that $(C,B,R)$ is the data for a planar state sum model. As before, define $A$ to be the vector space spanned by $S$.
 Consider the 2-2 move depicted in figure~\ref{fig:Pach1}. Its graphical counterpart is given below.
$$
\begingroup%
  \makeatletter%
  \providecommand\color[2][]{%
    \errmessage{(Inkscape) Color is used for the text in Inkscape, but the package 'color.sty' is not loaded}%
    \renewcommand\color[2][]{}%
  }%
  \providecommand\transparent[1]{%
    \errmessage{(Inkscape) Transparency is used (non-zero) for the text in Inkscape, but the package 'transparent.sty' is not loaded}%
    \renewcommand\transparent[1]{}%
  }%
  \providecommand\rotatebox[2]{#2}%
  \ifx\svgwidth\undefined%
    \setlength{\unitlength}{396.30935059bp}%
    \ifx\svgscale\undefined%
      \relax%
    \else%
      \setlength{\unitlength}{\unitlength * \real{\svgscale}}%
    \fi%
  \else%
    \setlength{\unitlength}{\svgwidth}%
  \fi%
  \global\let\svgwidth\undefined%
  \global\let\svgscale\undefined%
  \makeatother%
  \begin{picture}(1,0.18927401)%
    \put(0,0){\includegraphics[width=\unitlength]{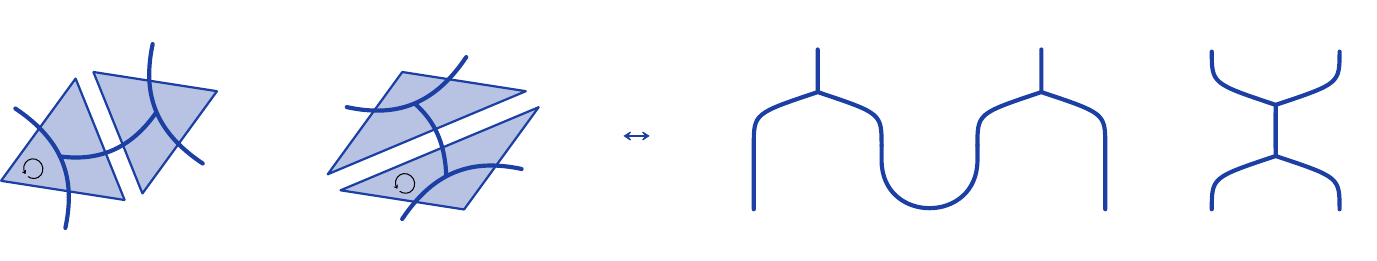}}%
    \put(0.05617569,0.01967112){\color[rgb]{0,0,0}\makebox(0,0)[lb]{\smash{$b$}}}%
    \put(0.01772008,0.11621934){\color[rgb]{0,0,0}\makebox(0,0)[lb]{\smash{$a$}}}%
    \put(0.11876835,0.13871992){\color[rgb]{0,0,0}\makebox(0,0)[lb]{\smash{$d$}}}%
    \put(0.14372358,0.08021829){\color[rgb]{0,0,0}\makebox(0,0)[lb]{\smash{$c$}}}%
    \put(0.30204618,0.0159892){\color[rgb]{0,0,0}\makebox(0,0)[lb]{\smash{$b$}}}%
    \put(0.3785483,0.07408178){\color[rgb]{0,0,0}\makebox(0,0)[lb]{\smash{$c$}}}%
    \put(0.34663834,0.13708348){\color[rgb]{0,0,0}\makebox(0,0)[lb]{\smash{$d$}}}%
    \put(0.25827223,0.12153764){\color[rgb]{0,0,0}\makebox(0,0)[lb]{\smash{$a$}}}%
    \put(0.58800877,0.17267542){\color[rgb]{0,0,0}\makebox(0,0)[lb]{\smash{$a$}}}%
    \put(0.54218928,0.00207973){\color[rgb]{0,0,0}\makebox(0,0)[lb]{\smash{$b$}}}%
    \put(0.79746909,0.00289789){\color[rgb]{0,0,0}\makebox(0,0)[lb]{\smash{$c$}}}%
    \put(0.7512405,0.17349364){\color[rgb]{0,0,0}\makebox(0,0)[lb]{\smash{$d$}}}%
    \put(0.83469735,0.08553665){\color[rgb]{0,0,0}\makebox(0,0)[lb]{\smash{$=$}}}%
    \put(0.87397121,0.17349364){\color[rgb]{0,0,0}\makebox(0,0)[lb]{\smash{$a$}}}%
    \put(0.96724663,0.17349364){\color[rgb]{0,0,0}\makebox(0,0)[lb]{\smash{$d$}}}%
    \put(0.87519854,0.00207973){\color[rgb]{0,0,0}\makebox(0,0)[lb]{\smash{$b$}}}%
    \put(0.96806472,0.00207973){\color[rgb]{0,0,0}\makebox(0,0)[lb]{\smash{$c$}}}%
    \put(0.19422986,0.08959682){\color[rgb]{0,0,0}\makebox(0,0)[lb]{\smash{$=$}}}%
  \end{picture}%
\endgroup%

\label{fig:diagram_pach22}
$$
\noindent 
Using first the non-degeneracy of $B$ by contracting each side with $B_{ea}$ and second the definition of the multiplication components, one can simplify the identity above to obtain
$$
\centering
\begingroup%
  \makeatletter%
  \providecommand\color[2][]{%
    \errmessage{(Inkscape) Color is used for the text in Inkscape, but the package 'color.sty' is not loaded}%
    \renewcommand\color[2][]{}%
  }%
  \providecommand\transparent[1]{%
    \errmessage{(Inkscape) Transparency is used (non-zero) for the text in Inkscape, but the package 'transparent.sty' is not loaded}%
    \renewcommand\transparent[1]{}%
  }%
  \providecommand\rotatebox[2]{#2}%
  \ifx\svgwidth\undefined%
    \setlength{\unitlength}{130.59786377bp}%
    \ifx\svgscale\undefined%
      \relax%
    \else%
      \setlength{\unitlength}{\unitlength * \real{\svgscale}}%
    \fi%
  \else%
    \setlength{\unitlength}{\svgwidth}%
  \fi%
  \global\let\svgwidth\undefined%
  \global\let\svgscale\undefined%
  \makeatother%
  \begin{picture}(1,0.47600143)%
    \put(0,0){\includegraphics[width=\unitlength]{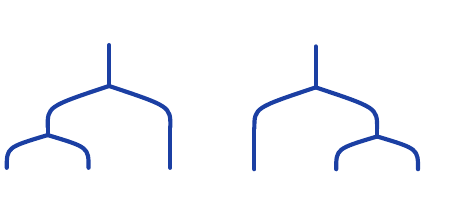}}%
    \put(-0.00218345,0.00755238){\color[rgb]{0,0,0}\makebox(0,0)[lb]{\smash{$a$}}}%
    \put(0.18031007,0.00631109){\color[rgb]{0,0,0}\makebox(0,0)[lb]{\smash{$b$}}}%
    \put(0.35907908,0.00755238){\color[rgb]{0,0,0}\makebox(0,0)[lb]{\smash{$c$}}}%
    \put(0.54405583,0.00631109){\color[rgb]{0,0,0}\makebox(0,0)[lb]{\smash{$a$}}}%
    \put(0.72468714,0.00755256){\color[rgb]{0,0,0}\makebox(0,0)[lb]{\smash{$b$}}}%
    \put(0.90407698,0.00817339){\color[rgb]{0,0,0}\makebox(0,0)[lb]{\smash{$c$}}}%
    \put(0.22254522,0.42811466){\color[rgb]{0,0,0}\makebox(0,0)[lb]{\smash{$d$}}}%
    \put(0.67824339,0.42664793){\color[rgb]{0,0,0}\makebox(0,0)[lb]{\smash{$d$}}}%
    \put(0.43266288,0.15363947){\color[rgb]{0,0,0}\makebox(0,0)[lb]{\smash{$=$}}}%
  \end{picture}%
\endgroup%

\label{fig:diagram_mult2}
$$
\noindent The multiplication map is therefore associative, as in theorem~\ref{theo:state_sum_algebra}. 

Next, \eqref{eq:BCequation} implies
\begin{align} \label{eq:associative_bilinear}
B^{-1}(e_a \cdot e_b, e_c)=B^{-1}(e_a, e_b \cdot e_c).
\end{align}
 This means that a functional $\varepsilon\colon A\to k$ can be defined by $\varepsilon(x)= B^{-1}(x,1)$. However, there are no additional symmetry requirements that $\varepsilon$ must obey. 

To simplify the exposition of the 1-3 Pachner move, a 2-2 move was performed on the two left-most triangles of figure~\ref{fig:Pach2}. The relation
$$
\begingroup%
  \makeatletter%
  \providecommand\color[2][]{%
    \errmessage{(Inkscape) Color is used for the text in Inkscape, but the package 'color.sty' is not loaded}%
    \renewcommand\color[2][]{}%
  }%
  \providecommand\transparent[1]{%
    \errmessage{(Inkscape) Transparency is used (non-zero) for the text in Inkscape, but the package 'transparent.sty' is not loaded}%
    \renewcommand\transparent[1]{}%
  }%
  \providecommand\rotatebox[2]{#2}%
  \ifx\svgwidth\undefined%
    \setlength{\unitlength}{399.08691406bp}%
    \ifx\svgscale\undefined%
      \relax%
    \else%
      \setlength{\unitlength}{\unitlength * \real{\svgscale}}%
    \fi%
  \else%
    \setlength{\unitlength}{\svgwidth}%
  \fi%
  \global\let\svgwidth\undefined%
  \global\let\svgscale\undefined%
  \makeatother%
  \begin{picture}(1,0.32747391)%
    \put(0,0){\includegraphics[width=\unitlength]{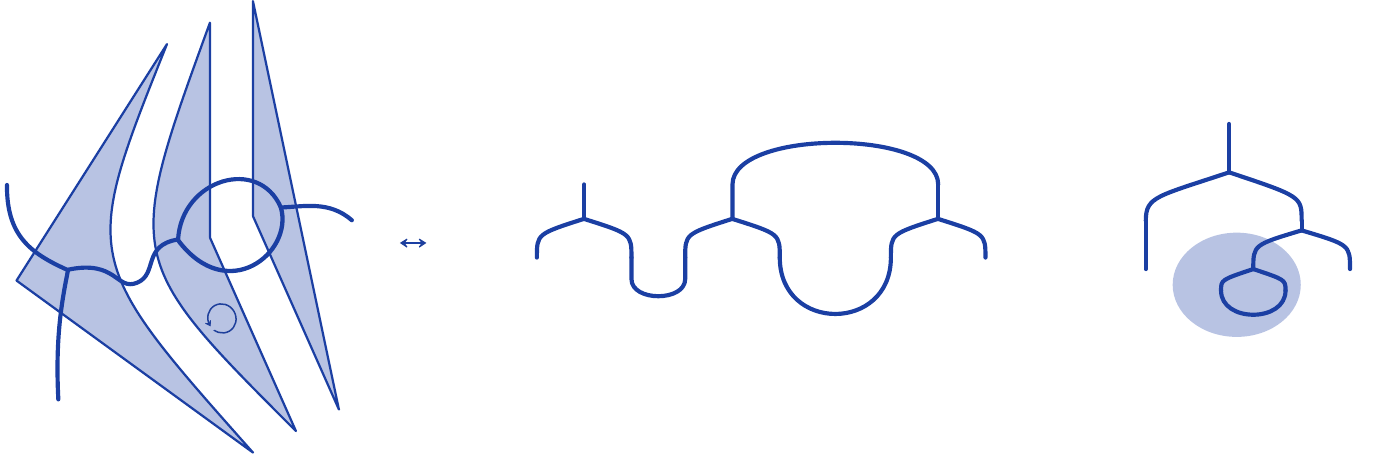}}%
    \put(-0.00071452,0.21049615){\color[rgb]{0,0,0}\makebox(0,0)[lb]{\smash{$a$}}}%
    \put(0.03625468,0.00511142){\color[rgb]{0,0,0}\makebox(0,0)[lb]{\smash{$b$}}}%
    \put(0.25441362,0.1434641){\color[rgb]{0,0,0}\makebox(0,0)[lb]{\smash{$c$}}}%
    \put(0.41529072,0.21496497){\color[rgb]{0,0,0}\makebox(0,0)[lb]{\smash{$a$}}}%
    \put(0.3819778,0.10730741){\color[rgb]{0,0,0}\makebox(0,0)[lb]{\smash{$b$}}}%
    \put(0.70576299,0.10649476){\color[rgb]{0,0,0}\makebox(0,0)[lb]{\smash{$c$}}}%
    \put(0.32957081,0.14468283){\color[rgb]{0,0,0}\makebox(0,0)[lb]{\smash{$R$}}}%
    \put(0.88045271,0.25762179){\color[rgb]{0,0,0}\makebox(0,0)[lb]{\smash{$a$}}}%
    \put(0.82113946,0.10040108){\color[rgb]{0,0,0}\makebox(0,0)[lb]{\smash{$b$}}}%
    \put(0.8552649,0.11136991){\color[rgb]{0,0,0}\makebox(0,0)[lb]{\smash{$R$}}}%
    \put(0.96861,0.09999476){\color[rgb]{0,0,0}\makebox(0,0)[lb]{\smash{$c$}}}%
    \put(0.76067961,0.14427877){\color[rgb]{0,0,0}\makebox(0,0)[lb]{\smash{$=$}}}%
  \end{picture}%
\endgroup%

\label{fig:diag_pach_31}
$$ 
\noindent is obtained. It was simplified using the definition of multiplication components and associativity. The 1-3 Pachner move predicts the expression above must equal $C_{bc}{}^a$. Since $C$ is assumed to be non-degenerate one concludes the highlighted new element, $R\beta$ with $\beta=e_a \cdot e_b\,B^{ab}$, must satisfy $R\beta=1$. Since $A$ has a unit it is an algebra and is therefore a  special  Frobenius algebra.

Conversely, given a special Frobenius algebra with multiplication $m$ and a linear functional $\varepsilon$, a non-degenerate bilinear form is defined by $B^{-1}= \varepsilon \circ m$, with property \eqref{eq:gen_cycle}. As previously stated, the fact the algebra is unital implies the non-degeneracy of $C$, while associativity and the relation $R\beta=1$ guarantee invariance under Pachner moves. The diagrammatic state sum model created is therefore planar.
\end{proof}

A point that is worth noting from the proof is that in the diagrammatic calculus, a power of $R$ is associated to every closed region in the diagram. If the diagram comes from a triangulation, then the closed regions are dual to the vertices of a triangulation.

It is also worth noting that having $\beta$ proportional to the identity is a non-trivial restriction on Frobenius algebras. The following arguments show that this condition implies the algebra must be separable. Note that some presentations of these state sum models \cite{Lauda,Runkel} assume from the outset the algebra is of this type. There are a number of equivalent definitions of the separability condition; the most convenient one for the purpose of this work is as follows \cite{KadisonStolin}, where the vector space $A\otimes A$ is a bimodule over $A$ with the actions $x\triangleright (u\otimes v)=(x\cdot u)\otimes v$ and $(u\otimes v)\triangleleft x=u\otimes(v\cdot x)$.
\begin{definition}[Separable algebra]
An algebra $A$ is called separable if there exists $t \in A \otimes A$ such that $x\triangleright t=t\triangleleft x$ for all $x \in A$ and $m(t)=1 \in A$.
\end{definition}

The relevance of this definition to the state sum models is given in the following lemma.
\begin{lemma}\label{separable} A special Frobenius algebra is a separable algebra.
\end{lemma}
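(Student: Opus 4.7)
The plan is to exhibit an explicit separability element, namely $t := R\,B \in A\otimes A$, where $B$ is the Frobenius copairing dual to $B^{-1}=\varepsilon\circ m$. Two things must be verified: $m(t)=1$, and the bimodule centrality condition $x\triangleright t = t\triangleleft x$ for every $x\in A$.

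The first condition is immediate from the hypothesis of Theorem \ref{theo:diagram}: since $\beta = m(B)$ and the special Frobenius algebra has identity $1 = R\beta$, one gets $m(t) = R\,m(B) = R\beta = 1$.

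The real content of the proof is the centrality condition $x\triangleright B = B\triangleleft x$. By linearity it is enough to verify it when $x=e_p$ is a basis element. Expanding $e_p\cdot e_a = C_{pad}B^{dc}e_c$ on both sides and reading off the coefficient of $e_\alpha\otimes e_\beta$, the identity reduces to the index equality
\[ B^{a\beta}\,C_{pad}\,B^{d\alpha} \;=\; B^{\alpha b}\,C_{bpd}\,B^{d\beta}. \]
I would prove this by repeated use of the cyclicity relation \eqref{eq:BCequation}, which in each application trades a $B$ for a $B$ on the opposite side of a $C$-vertex. On the left, two applications (first consuming the index $\alpha$ via $C_{pad}B^{d\alpha}=B^{\alpha e}C_{epa}$, then the index $\beta$ via $C_{epa}B^{a\beta}=B^{\beta f}C_{fep}$) collapse the expression to $B^{\alpha e}B^{\beta f}C_{fep}$. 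On the right, a single application consuming $\beta$ gives $B^{\alpha b}B^{\beta e}C_{ebp}$, which after relabelling the two dummy indices agrees with the left-hand side.

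The main obstacle is bookkeeping of index orders, since $B$ is no longer assumed symmetric and \eqref{eq:BCequation} always consumes the last index of $C$, so it must be applied in exactly the right direction at each step. The computation is much more transparent diagrammatically: it amounts to sliding the vertex labelled $x$ around the cup that represents $B$, which is precisely the local move permitted by the graphical form of \eqref{eq:BCequation} developed in \S\ref{sec:diagram}.
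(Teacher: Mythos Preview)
Your proof is correct and follows essentially the same approach as the paper. Both take $t=RB$, verify $m(t)=R\beta=1$ from the special condition, and then establish the bimodule centrality $x\triangleright B=B\triangleleft x$ using the cyclicity relation \eqref{eq:BCequation}; the only difference is presentational, in that the paper phrases the computation via the snake identity $\varepsilon(y\cdot e_a)\,e_b\,B^{ab}=y$ and then cancels the non-degenerate pairing $\varepsilon(y\cdot-)$, whereas you carry out the equivalent manipulation directly in index notation.
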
 

\begin{proof} Define $R\in k$ by $\beta=R^{-1}1$. Using the basis $\{e_a\}$ of the Frobenius algebra $A$ with Frobenius form $\varepsilon$, define $B_{ab}=\varepsilon(e_a\cdot e_b)$, $B^{ab}B_{bc}=\delta^a_c$, and set $t=R\,e_a\otimes e_b\,B^{ab}=RB$. Then the identity $\varepsilon (y\cdot e_a)\,e_b\,B^{ab} =y$ for all $y\in A$ follows. Using this identity twice, one finds $\varepsilon(y\cdot x\cdot e_a)\,e_b\,B^{ab}=y\cdot x=\varepsilon(y\cdot e_a)\,e_b\cdot x\,B^{ab}$, which can be depicted diagrammatically as 
$$
\begingroup%
  \makeatletter%
  \providecommand\color[2][]{%
    \errmessage{(Inkscape) Color is used for the text in Inkscape, but the package 'color.sty' is not loaded}%
    \renewcommand\color[2][]{}%
  }%
  \providecommand\transparent[1]{%
    \errmessage{(Inkscape) Transparency is used (non-zero) for the text in Inkscape, but the package 'transparent.sty' is not loaded}%
    \renewcommand\transparent[1]{}%
  }%
  \providecommand\rotatebox[2]{#2}%
  \ifx\svgwidth\undefined%
    \setlength{\unitlength}{201.36540557bp}%
    \ifx\svgscale\undefined%
      \relax%
    \else%
      \setlength{\unitlength}{\unitlength * \real{\svgscale}}%
    \fi%
  \else%
    \setlength{\unitlength}{\svgwidth}%
  \fi%
  \global\let\svgwidth\undefined%
  \global\let\svgscale\undefined%
  \makeatother%
  \begin{picture}(1,0.22623053)%
    \put(0,0){\includegraphics[width=\unitlength]{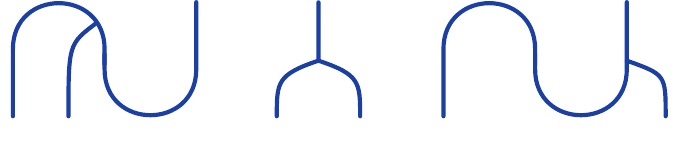}}%
    \put(-0.00141611,0.00836089){\color[rgb]{0,0,0}\makebox(0,0)[lb]{\smash{$y$}}}%
    \put(0.31426321,0.11159446){\color[rgb]{0,0,0}\makebox(0,0)[lb]{\smash{$=$}}}%
    \put(0.07804143,0.00836089){\color[rgb]{0,0,0}\makebox(0,0)[lb]{\smash{$x$}}}%
    \put(0.37600721,0.00836089){\color[rgb]{0,0,0}\makebox(0,0)[lb]{\smash{$y$}}}%
    \put(0.49519352,0.00836089){\color[rgb]{0,0,0}\makebox(0,0)[lb]{\smash{$x$}}}%
    \put(0.61437983,0.00836089){\color[rgb]{0,0,0}\makebox(0,0)[lb]{\smash{$y$}}}%
    \put(0.54684092,0.11165569){\color[rgb]{0,0,0}\makebox(0,0)[lb]{\smash{$=$}}}%
    \put(0.93221,0.00836089){\color[rgb]{0,0,0}\makebox(0,0)[lb]{\smash{$x$}}}%
    \put(0.99241506,0.1207863){\color[rgb]{0,0,0}\makebox(0,0)[lb]{\smash{.}}}%
  \end{picture}%
\endgroup%

$$
Then, the non-degeneracy of $\varepsilon$ guarantees that  $x \triangleright t=t \triangleleft x$ for all $x\in A$: 
$$
\begingroup%
  \makeatletter%
  \providecommand\color[2][]{%
    \errmessage{(Inkscape) Color is used for the text in Inkscape, but the package 'color.sty' is not loaded}%
    \renewcommand\color[2][]{}%
  }%
  \providecommand\transparent[1]{%
    \errmessage{(Inkscape) Transparency is used (non-zero) for the text in Inkscape, but the package 'transparent.sty' is not loaded}%
    \renewcommand\transparent[1]{}%
  }%
  \providecommand\rotatebox[2]{#2}%
  \ifx\svgwidth\undefined%
    \setlength{\unitlength}{113.8421875bp}%
    \ifx\svgscale\undefined%
      \relax%
    \else%
      \setlength{\unitlength}{\unitlength * \real{\svgscale}}%
    \fi%
  \else%
    \setlength{\unitlength}{\svgwidth}%
  \fi%
  \global\let\svgwidth\undefined%
  \global\let\svgscale\undefined%
  \makeatother%
  \begin{picture}(1,0.2656821)%
    \put(0,0){\includegraphics[width=\unitlength]{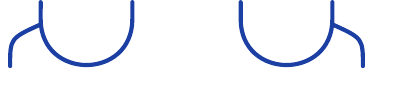}}%
    \put(0.42615874,0.16886726){\color[rgb]{0,0,0}\makebox(0,0)[lb]{\smash{$=$}}}%
    \put(0.88995869,0.00724001){\color[rgb]{0,0,0}\makebox(0,0)[lb]{\smash{$x$}}}%
    \put(0.97428595,0.20400362){\color[rgb]{0,0,0}\makebox(0,0)[lb]{\smash{.}}}%
    \put(-0.00250484,0.00724001){\color[rgb]{0,0,0}\makebox(0,0)[lb]{\smash{$x$}}}%
  \end{picture}%
\endgroup%

$$
Also, $m(t)=R\beta=1$.
\end{proof}

For a field $k$ of characteristic zero, separability for an algebra is equivalent to it being both finite dimensional and semisimple \cite{KadisonStolin,Aguiar}. Therefore, if $k=\Rb$ or $\Cb$ these Frobenius algebras are easily classified. 

Consider the complex algebra $A=\Mb_{n}(\Cb)$ with Frobenius form $\varepsilon(a)=\Tr(xa)$ for some fixed invertible element $x \in A$. This determines the non-degenerate bilinear form $B^{-1}(a,b)=\Tr(xab)$. Let $\lbrace e_{lm} \rbrace_{l,m=1,n}$ be the basis of elementary matrices such that $(e_{lm})_{rs}=\delta_{lr}\delta_{ms}$. Then $B$ must be given by
\begin{align}
B=\sum_{lm} e_{lm}x^{-1} \otimes e_{ml} \in A \otimes A.
\end{align}
The defining equation $B^{-1}B=1$ is satisfied since the cyclicity of the trace guarantees $\sum_{lm}\Tr(xae_{lm}x^{-1})e_{ml}=\sum_{lm}\Tr(ae_{lm})e_{ml}=a$ for all $a \in A$. Moreover, the distinguished element satisfies $\beta=\Tr(x^{-1})1$. This identity follows from noticing that $p(a)=\sum_{lm} e_{lm}ae_{ml}=\Tr(a)1$, where the map $p$ is proportional to a projector $A \to A$ with the centre of $A$, $\mathcal{Z}(A)$, as its image. Thus our example will define a planar state sum model if $R^{-1}=\Tr(x^{-1})$. This particular example will be used to prove the theorem below.

\begin{theorem} \label{theo:diagram_semi_simple}
A planar state sum model over the field $k = \Cb$ or $\Rb$ is isomorphic by a change of basis to one in which the algebra is a direct sum of
matrix algebras over $\Cb$ or division rings $\Rb,\Cb_{\Rb},\Hb_{\Rb}$ and the Frobenius form is determined by a fixed invertible element $x =\oplus_i x_i \in A$. For a complex algebra
\begin{align} \label{eq:complex_semi}
A = \bigoplus\limits_{i=1}^N \Mb_{n_i}(\Cb), 
\end{align}
the functional takes the form
\begin{align}
\varepsilon(a) = \sum\limits_{i=1}^N \Tr (x_ia_i).
\end{align}
The element $x$ must satisfy the relations $R\Tr(x^{-1}_i)=1$ for all $i=1,\cdots,N$. For a real algebra
\begin{align} \label{eq:real_semi}
A = \bigoplus\limits_{i=1}^N \Mb_{n_i}(D_i) \text{ with }D_i=\Rb,\Cb_{\Rb},\Hb_{\Rb}
\end{align}
the Frobenius form is given by
\begin{align}
\varepsilon(a) = \sum\limits_{i=1}^N \Real  \Tr(x_ia_i).
\end{align}
The element $x$ must satisfy the relations
\begin{align}
R^{-1}=
\begin{cases}
\Tr(x_i^{-1}) & (D_i = \Rb) \\
2\Tr(x_i^{-1}) &  (D_i = \Cb_{\Rb}) \\
4 \Real\Tr(x_i^{-1}) &  (D_i = \Hb_{\Rb})
\end{cases}
\end{align}
 for all $i=1,\cdots,N$.
\end{theorem}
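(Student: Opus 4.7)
The starting point is theorem~\ref{theo:diagram}, which identifies the data $(C,B,R)$ of a planar state sum model with a special Frobenius algebra $(A,m,\varepsilon)$ satisfying $R\beta=1$. Lemma~\ref{separable} then ensures $A$ is separable, and since $\mathrm{char}\,k=0$ this is equivalent to $A$ being finite-dimensional and semisimple. Wedderburn--Artin supplies the stated isomorphism~\eqref{eq:complex_semi} or~\eqref{eq:real_semi}; after such a change of basis, what remains is to determine the shape of the Frobenius form and to translate $R\beta=1$ into the stated trace conditions.

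Since the simple summands $A_i$ are two-sided ideals with $A_i A_j=0$ for $i\ne j$, non-degeneracy of $\varepsilon\circ m$ forces $\varepsilon$ to restrict to a Frobenius form on each $A_i$ and makes the summands mutually orthogonal under $\varepsilon\circ m$. On $\Mb_{n_i}(D_i)$ the classical trace pairing $(a,b)\mapsto\Tr(ab)$ (for $k=\Cb$) or $(a,b)\mapsto\Real\Tr(ab)$ (for $k=\Rb$) is non-degenerate by the argument in lemma~\ref{lem:Frobenius_forms}, and therefore identifies the simple factor with its $k$-linear dual. Every $k$-linear functional on the factor is accordingly of the form $a\mapsto\Tr(x_i a)$ or $a\mapsto\Real\Tr(x_i a)$ for a unique $x_i$, and non-degeneracy of the induced bilinear form is equivalent to invertibility of $x_i$.

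To unpack $R\beta=1$ I would compute $\beta$ in each simple factor. The complex case is handled in the text preceding the theorem: using the projector identity $\sum_{lm}e_{lm}a\,e_{ml}=\Tr(a)\cdot 1$ one gets $\beta=\Tr(x^{-1})\cdot 1$, whence $R\Tr(x_i^{-1})=1$. For the real cases I would work in the $\Rb$-basis $\{w\,e_{lm}\}$ with $w$ ranging over $\{1\}$, $\{1,\hat\imath\}$ or $\{1,\hat\imath,\hat\jmath,\hat k\}$ according to $D_i=\Rb,\Cb_\Rb,\Hb_\Rb$. Cyclic invariance of $\Real\Tr$ together with the orthogonality relation $\Real(w(w')^*)=\delta_{ww'}$ identifies the basis dual to $\{w\,e_{rs}\}$ under $\varepsilon\circ m$ as $\{w^*e_{sr}\,x^{-1}\}$, so that $\beta=\sum_{w,l,m}w^*e_{ml}x^{-1}we_{lm}$. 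Applying the projector identity with $A=x^{-1}w$ inside the $(l,m)$-sum collapses $\beta$ to $\bigl(\sum_w w^*\Tr(x^{-1})w\bigr)\cdot 1$.

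The three cases now differ only through the evaluation of $\sum_w w^*qw$ for the quaternion $q=\Tr(x^{-1})$. For $D_i=\Rb$ this is simply $q$. For $D_i=\Cb_\Rb$ the scalar $\hat\imath$ is central in $\Cb$, so $\hat\imath q\hat\imath=-q$ and the sum equals $2q$, giving $\beta=2\Tr(x^{-1})\cdot 1$. The quaternionic case is the main obstacle, because $\hat\imath,\hat\jmath,\hat k$ no longer commute with a generic $q\in\Hb$; here one uses the direct quaternion computation $q+\hat\imath q\hat\imath+\hat\jmath q\hat\jmath+\hat k q\hat k=-2q^*$ to conclude $\sum_w w^*qw=4\Real(q)$, giving $\beta=4\Real\Tr(x^{-1})\cdot 1$. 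Substituting into $R\beta=1$ produces the three conditions in the theorem; conversely, any $(x_1,\ldots,x_N,R)$ satisfying these conditions defines a special Frobenius algebra with $R\beta=1$, hence by theorem~\ref{theo:diagram} a planar state sum model.
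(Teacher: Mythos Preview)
Your proof is correct and follows essentially the same route as the paper's. Both arguments pass through separability/semisimplicity to the Wedderburn--Artin decomposition, parametrise the Frobenius form on each simple factor by an invertible element $x_i$ via the reference trace pairing, and then compute $\beta=m(B)$ factor by factor to read off the trace constraints from $R\beta=1$. The only cosmetic differences are that the paper quotes the classification of Frobenius forms (any two differ by an invertible element) where you argue it directly from non-degeneracy of $\Real\Tr$, and your quaternionic reduction uses the explicit identity $q+\hat\imath q\hat\imath+\hat\jmath q\hat\jmath+\hat k q\hat k=-2q^*$ where the paper simply observes that the imaginary parts cancel in $\sum_w w\,\Tr(x^{-1})\,w^*$.
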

\begin{proof}
The classification of Frobenius forms on an algebra~\cite{Lauda,Kock} shows that any two Frobenius forms $\varepsilon$, $\tilde{\varepsilon}$ are related by an invertible element $x \in A$ as $\varepsilon(a)=\tilde{\varepsilon}(xa)$. Thus, for the complex case, one can write
\begin{align}
\varepsilon(a)=\sum_i\Tr(x_ia_i)
\end{align}
using the decomposition $x=\oplus_i x_i$ and lemma \ref{lem:Frobenius_forms}. From the example of a simple matrix algebra previously studied, one concludes $\beta_i=\Tr(x^{-1}_i)1_i$, with $1_i$ the unit element in $\Mb_{n_i}(\Cb)$. Consequently, setting $R\beta=1$ gives the relations $R\Tr(x_i^{-1})=1$ for all $i$. 

As established in \S\ref{sec:lattice_tft}, $\Real\Tr$ is a Frobenius functional for a matrix algebra over a real division ring. Thus, for an algebra \eqref{eq:real_semi}, one can write
\begin{align}
\varepsilon(a)=\sum_i\Real\Tr(x_ia_i).
\end{align}
It is easy to verify the bilinear form $B$ associated with this Frobenius functional satisfies
\begin{align}
B=\sum_{i,lm,w_i}\,w_i\,e_{lm}^{i}\,x_i^{-1}\otimes \, w_i^{\ast} e_{ml}^i
\end{align}
using the basis defined in lemma~\ref{lem:Frobenius_forms}; one then finds 
\begin{align}
m(B)=\sum_i\sum_{w_i} w_i\Tr(x_i^{-1}) w_i^{\ast}1_i \hspace{1mm}.
\end{align}
For the identity $R\beta=1$ to hold it is therefore necessary to have $R^{-1}=\sum_{w_i}w_i\Tr(x_i^{-1}) w_i^{\ast}$ for all $i$. If $D_i=\Rb$ or $\Cb_{\Rb}$, then $w_i^{\ast}$ and $\Tr(x_i^{-1})$ commute, which means the expression reduces to $R^{-1}=\Tr(x_i^{-1})$ and $R^{-1}=2\Tr(x_i^{-1})$ respectively. If $D_i=\Hb_{\Rb}$, the expression reduces to $R^{-1}=4\Real\Tr(x_i^{-1})$ -- the non-real components of the trace are automatically cancelled.

\end{proof}

As one might expect, the study of state sum models done in  \S\ref{sec:lattice_tft} for the disk can be regarded as a special case of theorem \ref{theo:diagram_semi_simple}.

\begin{corollary} \label{cor:FHK}
An FHK state sum model on the disk over the field $k = \Cb$ or $\Rb$ is a planar state sum model in the conditions of theorem~\ref{theo:diagram_semi_simple} where the Frobenius form is symmetric. If the algebra is of the form \eqref{eq:complex_semi} then $x=\oplus_i Rn_i1_i$; if it is of the form \eqref{eq:real_semi} then $x=\oplus_i R|D_i|n_i1_i$. 
\end{corollary}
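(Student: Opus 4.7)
The plan is to first verify that every FHK state sum model on the disk fits into the planar state sum framework of theorem \ref{theo:diagram_semi_simple}, and then to read off the element $x$ by matching the two explicit formulas for $\varepsilon$.

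First I would check that an FHK model is a planar state sum model. By definition an FHK model is a naive state sum model satisfying both Pachner moves; its data $(C,B,R)$ obey \eqref{eq:C_cycle} and \eqref{eq:metric}. Combining the cyclicity of $C_{abc}$ with the symmetry of $B^{ab}$ gives $C_{abc}B^{cd}=C_{cab}B^{cd}=C_{cab}B^{dc}=B^{de}C_{eab}$, which is precisely condition \eqref{eq:BCequation}. Thus the FHK data qualify as diagrammatic state sum data, and the Pachner invariance carries over unchanged, so by theorem \ref{theo:diagram} the data define a planar state sum model. The resulting Frobenius form $\varepsilon(x)=B^{-1}(x,1)$ is symmetric because $B^{-1}$ is symmetric, which in turn follows from the symmetry of $B$.

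Next I would identify $x$. By theorem \ref{theo:diagram_semi_simple} every such planar state sum model is isomorphic to a direct sum of matrix algebras over $\Cb$ (in the complex case) or over $\Rb,\Cb_\Rb,\Hb_\Rb$ (in the real case), with Frobenius form of the form $\varepsilon(a)=\sum_i\Tr(x_i a_i)$ or $\varepsilon(a)=\sum_i\Real\Tr(x_i a_i)$ for some invertible $x=\oplus_i x_i$. Comparison with the FHK expressions \eqref{eq:cxfrob} and \eqref{eq:realfrob} pins $x$ down uniquely: in the complex case $\sum_i\Tr(x_i a_i)=R\sum_i n_i\Tr(a_i)$ for all $a$, forcing $x_i=Rn_i 1_i$; in the real case the scalar $R|D_i|n_i$ commutes with $a_i$ and is real, so $\Real\Tr(R|D_i|n_i a_i)=R|D_i|n_i\Real\Tr(a_i)$, forcing $x_i=R|D_i|n_i 1_i$.

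Finally I would verify the trace constraints stated in theorem \ref{theo:diagram_semi_simple}. With $x_i=Rn_i 1_i$ in the complex case one has $\Tr(x_i^{-1})=n_i/(Rn_i)=R^{-1}$, giving $R\Tr(x_i^{-1})=1$ as required. With $x_i=R|D_i|n_i 1_i$ in the real case one has $\Tr(x_i^{-1})=1/(R|D_i|)$ for $D_i=\Rb,\Cb_\Rb$ and $\Real\Tr(x_i^{-1})=1/(4R)$ for $D_i=\Hb_\Rb$; substituting into the three cases of the theorem yields $R^{-1}=\Tr(x_i^{-1})$, $R^{-1}=2\Tr(x_i^{-1})$, $R^{-1}=4\Real\Tr(x_i^{-1})$ respectively, all of which hold. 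There is no serious obstacle: the content of the corollary is essentially a bookkeeping compatibility between \eqref{eq:cxfrob}--\eqref{eq:realfrob} and the parametrisation of Frobenius forms by invertible elements used in theorem \ref{theo:diagram_semi_simple}, and the only thing to be careful about is that the scalar factor $R|D_i|n_i$ really is real so that it can be absorbed inside $\Real\Tr$ without change.
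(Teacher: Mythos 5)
Your first step --- checking that the FHK data satisfy \eqref{eq:BCequation} and hence define a planar state sum model with symmetric Frobenius form --- is fine, and is essentially what the paper's one-line reduction to theorem~\ref{theo:diagram_semi_simple} takes for granted. The genuine gap is in how you identify $x$. You pin it down by comparing the general form $\varepsilon(a)=\sum_i\Tr(x_ia_i)$ from theorem~\ref{theo:diagram_semi_simple} with the FHK formulas \eqref{eq:cxfrob} and \eqref{eq:realfrob}. But in the paper's logical structure those formulas are only \emph{stated} in \S\ref{sec:lattice_tft}, with their proof explicitly deferred to theorem~\ref{theo:diagram_semi_simple}; the corollary you are proving is precisely the step that establishes them for an arbitrary FHK model (hence the remark immediately after it, that the Frobenius form of an FHK model is uniquely determined by $A$ and $R$). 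Lemma~\ref{lem:Frobenius_forms} only shows that \eqref{eq:cxfrob}--\eqref{eq:realfrob} \emph{do} define symmetric special Frobenius forms with $R\beta=1$, not that every FHK model has a Frobenius form of that shape. So your comparison step assumes the very classification the corollary is meant to supply.

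The missing idea is the one the paper uses: since $\varepsilon(a\cdot b)=\Tr(x\,a\,b)$ (resp.\ $\Real\Tr$) is symmetric if and only if $x$ commutes with the whole algebra, symmetry of the FHK form forces $x$ to be \emph{central}, so $x=\oplus_i\mu_i 1_i$ with $\mu_i\in\Cb$ in the complex case or when $D_i=\Cb_\Rb$, and $\mu_i\in\Rb$ otherwise (only real scalars commute with all quaternions). Only after this do the specialness constraints of theorem~\ref{theo:diagram_semi_simple} --- $R\Tr(x_i^{-1})=1$, $R^{-1}=2\Tr(x_i^{-1})$, $R^{-1}=4\Real\Tr(x_i^{-1})$ --- reduce to $R^{-1}=\mu_i^{-1}n_i$ or $R^{-1}=\mu_i^{-1}|D_i|n_i$ and force $\mu_i=Rn_i$ or $R|D_i|n_i$. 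Your final paragraph only checks these constraints after positing the answer; without centrality they do not determine $x$ (each is a single scalar condition satisfied by many non-central invertible $x_i$), so that verification cannot substitute for the derivation.
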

\begin{proof}
This is a special case of theorem~\ref{theo:diagram_semi_simple} where $\varepsilon$ must be symmetric. This means $x$ must be a central element and can, therefore, be written as $x=\oplus_i \mu_i 1_i$. The constants $\mu_i$ must be in $\Cb$ if the underlying field is $\Cb$ or if $D_i=\Cb_{\Rb}$; otherwise, they must be real numbers (recall that only real numbers commute with all the quaternions). Each of these constants must then satisfy $R^{-1}=\mu_i^{-1}n_i$ in the complex case or $R^{-1}=\mu_i^{-1}|D_i|n_i$ in the real one. In other words $x=\oplus_iRn_i1_i$ or $x=\oplus_iR|D_i|n_i1_i$, respectively.
\end{proof}
This result implies that the Frobenius form for an FHK state sum model is uniquely determined by the  algebra $A$ and the constant $R$.

\subsection{Spherical models}\label{sec:spherical}

Suppose that $M$ is a subset of the sphere, $M\subset S^2$, with a chosen orientation. Then a state sum model is defined for every orientation-preserving isomorphism of $S^2-\{p\}$ to $\Rb^2$, with $p$ the `point at infinity', which should be chosen not to lie in the dual graph of the triangulation of $M$. Moving $p$ around corresponds to the spherical move  \cite{BarrettWestbury}
\begin{equation}\label{eq:blob}
\begin{aligned}
\begingroup%
  \makeatletter%
  \providecommand\color[2][]{%
    \errmessage{(Inkscape) Color is used for the text in Inkscape, but the package 'color.sty' is not loaded}%
    \renewcommand\color[2][]{}%
  }%
  \providecommand\transparent[1]{%
    \errmessage{(Inkscape) Transparency is used (non-zero) for the text in Inkscape, but the package 'transparent.sty' is not loaded}%
    \renewcommand\transparent[1]{}%
  }%
  \providecommand\rotatebox[2]{#2}%
  \ifx\svgwidth\undefined%
    \setlength{\unitlength}{141.73972168bp}%
    \ifx\svgscale\undefined%
      \relax%
    \else%
      \setlength{\unitlength}{\unitlength * \real{\svgscale}}%
    \fi%
  \else%
    \setlength{\unitlength}{\svgwidth}%
  \fi%
  \global\let\svgwidth\undefined%
  \global\let\svgscale\undefined%
  \makeatother%
  \begin{picture}(1,0.58091433)%
    \put(0,0){\includegraphics[width=\unitlength]{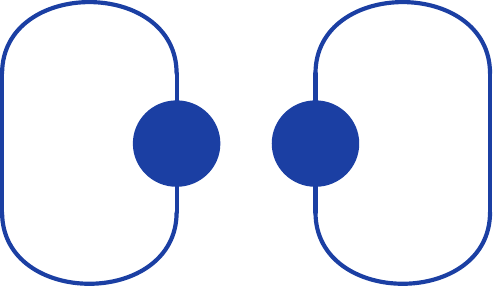}}%
    \put(0.47174085,0.2609378){\color[rgb]{0,0,0}\makebox(0,0)[lb]{\smash{$=$}}}%
  \end{picture}%
\endgroup%

\end{aligned}
\end{equation}
where $\begin{aligned}
\begingroup%
  \makeatletter%
  \providecommand\color[2][]{%
    \errmessage{(Inkscape) Color is used for the text in Inkscape, but the package 'color.sty' is not loaded}%
    \renewcommand\color[2][]{}%
  }%
  \providecommand\transparent[1]{%
    \errmessage{(Inkscape) Transparency is used (non-zero) for the text in Inkscape, but the package 'transparent.sty' is not loaded}%
    \renewcommand\transparent[1]{}%
  }%
  \providecommand\rotatebox[2]{#2}%
  \ifx\svgwidth\undefined%
    \setlength{\unitlength}{8.40000381bp}%
    \ifx\svgscale\undefined%
      \relax%
    \else%
      \setlength{\unitlength}{\unitlength * \real{\svgscale}}%
    \fi%
  \else%
    \setlength{\unitlength}{\svgwidth}%
  \fi%
  \global\let\svgwidth\undefined%
  \global\let\svgscale\undefined%
  \makeatother%
  \begin{picture}(1,1.61712173)%
    \put(0,0){\includegraphics[width=\unitlength]{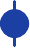}}%
  \end{picture}%
\endgroup%
\end{aligned}$ consists of a diagram that is the same on both sides of the equation. This move can be understood as making the arc on the left-hand side larger until it passes the point at infinity on the sphere, when it then re-enters the planar diagram as an arc on the right-hand side.

A sufficient condition that guarantees  \eqref{eq:blob} holds for any matrix representing $\begin{aligned}\end{aligned}$ is 
\begin{equation} B_{ca}B^{cb}=B_{ac}B^{bc}.\label{eq:spherical}\end{equation} 

The meaning of \eqref{eq:spherical} is easier to understand in the context of Frobenius algebras. 
\begin{definition}[Nakayama automorphism]\label{Nakayama}
A Frobenius algebra has an automorphism $\sigma \colon A \to A$ determined uniquely by the relation $\varepsilon(x \cdot y)=\varepsilon(\sigma(y) \cdot x)$ for all $x,y \in A$. 
\end{definition}

\begin{lemma}\label{lem:spherical}
Let $A$ be a Frobenius algebra. Then the following are equivalent:
\begin{enumerate}[(i)]\item Equation \eqref{eq:spherical} \label{item:equation}
\item $\sigma^2=\id$ \label{item:sigma}
\item $B^{-1}$ decomposes into a direct sum of a symmetric bilinear form and an antisymmetric bilinear form.\label{item:B}
\end{enumerate}
\end{lemma}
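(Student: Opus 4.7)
The plan is to prove the chain (i)$\Leftrightarrow$(ii)$\Leftrightarrow$(iii) by first rewriting condition (i) as a statement about $\sigma$, then extracting (iii) from the spectral decomposition of an involution.

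For (i)$\Leftrightarrow$(ii): in components the Nakayama relation $\varepsilon(x\cdot y)=\varepsilon(\sigma(y)\cdot x)$ reads $B_{ab}=\sigma_b^{c}B_{ca}$, and after relabelling the free indices, $B_{ca}=\sigma_a^{d}B_{dc}$. Contracting with $B^{cb}$ and using $B_{dc}B^{cb}=\delta_d^{b}$ yields
\begin{equation*}
B_{ca}B^{cb}=\sigma_a^{b},
\end{equation*}
so the left-hand side of (i) is the matrix of $\sigma$. For the right-hand side, a direct calculation using only $B^{bc}B_{eb}=\delta_e^{c}$ shows
\begin{equation*}
(B_{ac}B^{bc})(B_{eb}B^{ed})=B_{ac}B^{cd}=\delta_a^{d},
\end{equation*}
so that $B_{ac}B^{bc}$ is the matrix of $\sigma^{-1}$. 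Condition (i) therefore reads $\sigma=\sigma^{-1}$, equivalently $\sigma^{2}=\id$, which is (ii).

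For (ii)$\Leftrightarrow$(iii): over a field of characteristic zero an involution is diagonalisable with eigenvalues $\pm 1$, so from $\sigma^{2}=\id$ I obtain a splitting $A=A_{+}\oplus A_{-}$ into $\pm 1$-eigenspaces. Substituting $x,y\in A_{+}$, then $x,y\in A_{-}$, then $x\in A_{+},y\in A_{-}$ into $B^{-1}(x,y)=B^{-1}(\sigma(y),x)$ yields respectively $B^{-1}(x,y)=B^{-1}(y,x)$, $B^{-1}(x,y)=-B^{-1}(y,x)$, and $B^{-1}(x,y)=-B^{-1}(y,x)=-B^{-1}(x,y)$, which gives a symmetric form on $A_{+}$, an antisymmetric form on $A_{-}$, and forces the mixed terms to vanish. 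Conversely, given a decomposition as in (iii), define $\sigma'$ to act as $\pm\id$ on $A_{\pm}$; a direct evaluation block by block shows that $\sigma'$ satisfies the defining relation of the Nakayama automorphism, so uniqueness from Definition~\ref{Nakayama} forces $\sigma=\sigma'$, whence $\sigma^{2}=\id$.

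The main technical point is the index bookkeeping that identifies the two contractions of (i) with the matrices of $\sigma$ and $\sigma^{-1}$; once that translation is in place, both equivalences reduce to routine linear algebra, namely the spectral decomposition of an involution over a characteristic zero field and a per-block verification using the uniqueness of $\sigma$.
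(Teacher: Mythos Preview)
Your proof is correct and follows essentially the same route as the paper: identifying the two contractions in \eqref{eq:spherical} with $\sigma$ and $\sigma^{-1}$ (the paper phrases this as $\sigma=B^{-1}B^{\tr}$ and \eqref{eq:spherical} as $(B^{-1}B^{\tr})^{2}=\id$), then using the $\pm1$-eigenspace decomposition of the resulting involution. Your treatment of (ii)$\Leftrightarrow$(iii) via the Nakayama relation and uniqueness of $\sigma$ is a little more explicit than the paper's terse matrix argument, but the content is the same; note that only $\mathrm{char}\,k\neq 2$ is actually needed, not characteristic zero.
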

\begin{proof}
Note that equation \eqref{eq:spherical} can be rewritten as $(B^{-1}B^{\tr})^2=\id$, using matrix notation. The definition of $\sigma$ then implies that $\varepsilon(e_a \cdot e_b)=\varepsilon(\sigma(e_b)\cdot e_a)$ or, equivalently, $B_{ab}=\sigma_b{}^c B_{ca}$. By contracting both sides with $B^{ad}$ one can conclude that $\sigma_{b}{}^d=B_{ab}B^{ad}$ or, as matrices, $\sigma=B^{-1}B^{\tr}$. The equivalence between \eqref{item:equation} and \eqref{item:sigma} is then immediate.

Suppose $B^{-1}$ is as in \eqref{item:B}. Then the vectors $v$ that lie in the symmetric or antisymmetric subspaces satisfy $B^{-1}v=\pm (B^{-1})^{\tr}v$. If $B^{\tr}$ is applied to this equation the identity $B^{\tr}B^{-1}v=\pm v$ is obtained, which is equivalent to  $(B^{\tr}B^{-1})^2=\id$, which implies  \eqref{item:equation}. On the other hand, if \eqref{eq:spherical} is satisfied then $(B^{\tr}B^{-1})^2=\id$. The eigenspaces with eigenvalues $\pm1$ give the direct sum decomposition of  \eqref{item:B}.
%
%
\end{proof}
 
For the case of triangulations of $M=S^2$ (with no boundary) the condition \eqref{eq:spherical} is not required. In these cases, $\begin{aligned}\end{aligned}$ in  \eqref{eq:blob} is proportional to the identity matrix and so equation  \eqref{eq:blob} holds for any special Frobenius algebra. For the rest of this section and in \S\ref{sec:crossing}, only surfaces without boundary are considered and so the spherical condition is not needed. However the status of the spherical condition is addressed in a more general framework in \S\ref{sec:cat}.

\begin{definition} A state sum model for a triangulation of $S^2$ is said to be spherical if it is determined by the data of a planar state sum model.
\end{definition}

The partition function of a sphere can be calculated from any triangulation. The result
\begin{align} \label{eq:diag_sphere_inv}
Z(S^2)=R\,\varepsilon(1)=\begin{cases}R\Tr(x) &(k=\Cb)\\R\Real\Tr(x)&(k=\Rb)\end{cases}
\end{align}
follows from the classification given by theorem~\ref{theo:diagram_semi_simple}.  For $k=\Cb$, this result can also be written as $Z(S^2)=N\Tr(x)/\Tr(x^{-1})$.

\section{Models with crossings} \label{sec:crossing}

The diagrammatic method is extended to surfaces by the use of an immersion of the surface into $\Rb^3$. The dual of a triangulation of an oriented surface $\Sigma$ is a graph on the surface, which can be considered as a ribbon graph by taking the ribbon to be a suitable neighbourhood of the graph (called a regular neighbourhood \cite{hirsch-srn}) in the surface. This ribbon graph is therefore immersed in $\Rb^3$. The state sum model partition function is evaluated by taking a suitable invariant of this ribbon graph under the equivalence relation of regular homotopy. 

These concepts will be described in the case of smooth surfaces and immersions, for which there is a well-developed literature. As is standard in knot theory, the graphs can be described by the diagrams that result from a projection of $\Rb^3$ to $\Rb^2$ and the equivalence is a set of Reidemeister-like moves on diagrams. Then it is noted that the diagrams and their moves in fact also make sense as piecewise-linear diagrams, which is  more natural for triangulations. We leave it as a challenge to the reader to develop the theory using the piecewise-linear formulation of regular homotopy \cite{HaefligerPoenaru} from the beginning. 

A smooth immersion is a map $\phi\colon M\to N$ having a derivative that is injective at every point. Thus an immersion is locally an embedding. A regular homotopy from $\phi_0$ to $\phi_1$ is a family of immersions $\phi_t$, $t\in[0,1]$, that defines a smooth map $H(x,t)=\phi_t(x)\colon M\times[0,1]\to N$.  

Surfaces and curves immersed in $\Rb^3$ are studied in \cite{pinkall}, from which several key results are used. Let $\phi\colon\Sigma\to\Rb^3$ be a surface immersion and $G\subset\Sigma$ the graph dual to a triangulation of $\Sigma$. Then $\gamma=\phi_{|G}\colon G\to\Rb^3$ is an immersion of the graph $G$ and in the generic case this is an embedding, which means that there is an arbitrarily small regular homotopy to an embedding.  If there is a regular homotopy $\gamma_t$ between two embedded graphs $\gamma_0$ and $\gamma_1$, then the regular homotopy can be adjusted so that $\gamma_t$ is an embedding except at a finite set of values of $t$, where there is one intersection point. As $t$ varies through one of these values, one segment of an edge of the graph passes through another (see figure \ref{fig:regular-homotopy}).

\begin{figure}[t!]
                	\centering		
		\includegraphics{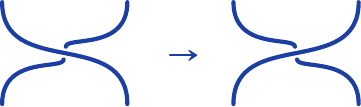}
		\caption{\emph{Regular homotopy.} Immersions of graphs in $\Rb^3$ allow for intersections. Regular homotopy thus allows a diagram under-crossing to be transfomed into an over-crossing.}
		\label{fig:regular-homotopy}
\end{figure} 

The graph $\gamma$ is described by a diagram obtained by projecting $\Rb^3$ to $\Rb^2$. It is assumed that this projection is generic, so that the graph is immersed in $\Rb^2$ with transverse self-intersections of edges. Since regular homotopy allows the edges to pass through each other, there is no need to record whether the crossings are over- or under-crossings. Diagrams are thus obtained from the usual diagrams of knot theory by setting over- and under-crossings equal, as is done in the theory of virtual knots \cite{KauffmanManturov}.

The graph $\gamma$ has a ribbon structure obtained by taking a suitably small regular neighbourhood $K$ of $\gamma$ in $\Sigma$, thus $\gamma\subset K\subset \Sigma$.
The formalism is simplified if the projection to $\Rb^2$ preserves the ribbon structure of the graph. As is standard in knot theory \cite{kauffman-regular-isotopy}, an embedded ribbon graph can be adjusted by a regular homotopy so that the projection of the ribbon to $\Rb^2$ is an orientation-preserving immersion. This is called `blackboard framing'. Then using blackboard-framed knots throughout, it is not necessary to include the ribbon in the planar diagrams. 

The state sum model is defined from the diagram in the plane by augmenting the formalism for a spherical state sum with a crossing map $\lambda \colon A \otimes A \to A \otimes A$ where one edge of the graph crosses another as shown.  
$$	
\begingroup%
  \makeatletter%
  \providecommand\color[2][]{%
    \errmessage{(Inkscape) Color is used for the text in Inkscape, but the package 'color.sty' is not loaded}%
    \renewcommand\color[2][]{}%
  }%
  \providecommand\transparent[1]{%
    \errmessage{(Inkscape) Transparency is used (non-zero) for the text in Inkscape, but the package 'transparent.sty' is not loaded}%
    \renewcommand\transparent[1]{}%
  }%
  \providecommand\rotatebox[2]{#2}%
  \ifx\svgwidth\undefined%
    \setlength{\unitlength}{106.83815918bp}%
    \ifx\svgscale\undefined%
      \relax%
    \else%
      \setlength{\unitlength}{\unitlength * \real{\svgscale}}%
    \fi%
  \else%
    \setlength{\unitlength}{\svgwidth}%
  \fi%
  \global\let\svgwidth\undefined%
  \global\let\svgscale\undefined%
  \makeatother%
  \begin{picture}(1,0.53023692)%
    \put(0,0){\includegraphics[width=\unitlength]{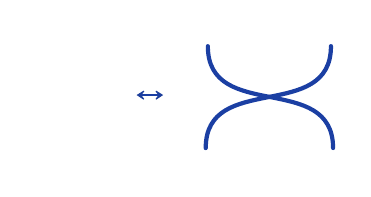}}%
    \put(-0.00266907,0.25241022){\color[rgb]{0,0,0}\makebox(0,0)[lb]{\smash{$\lambda_{ab}{}^{cd}$}}}%
    \put(0.53352247,0.00771467){\color[rgb]{0,0,0}\makebox(0,0)[lb]{\smash{$a$}}}%
    \put(0.87850342,0.00905148){\color[rgb]{0,0,0}\makebox(0,0)[lb]{\smash{$b$}}}%
    \put(0.53753359,0.47170068){\color[rgb]{0,0,0}\makebox(0,0)[lb]{\smash{$c$}}}%
    \put(0.86914347,0.47036364){\color[rgb]{0,0,0}\makebox(0,0)[lb]{\smash{$d$}}}%
  \end{picture}%
\endgroup%

$$ 
The partition function is calculated using the  analogue of the formula \eqref{eq:diag-partition} for the planar state sum models, with $|\gamma|$ the invariant of the ribbon graph described above,
\begin{equation}Z(M)=R^V |\gamma|.\label{eq:diag-spin}\end{equation}

An example of a planar diagram for the torus triangulated using two triangles is shown in figure \ref{fig:torus-diagrams}. The middle diagram shows a projection of the graph that is not blackboard-framed but the final diagram is the result of applying a regular homotopy so that the graph is blackboard-framed.

\begin{figure}[t!]
                \centering		
		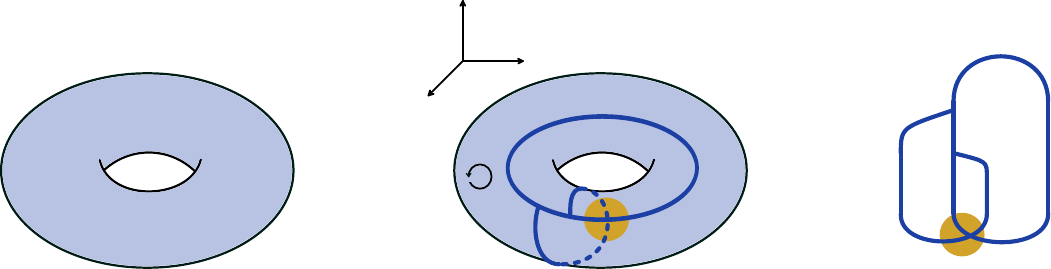
		\caption{\emph{Torus immersion.} A diagrammatic state sum model for the torus created from a triangulation with two triangles.}\label{fig:torus-diagrams}
\end{figure} 

The ribbon structure is preserved under the equivalence relation of regular homotopy. The usual Reidemeister moves for knots do not preserve the ribbon structure, so one has to use a modified set of moves for ribbon knots, described in \cite{kauffman-regular-isotopy,freyd-yetter}. The moves for graphs are described in \cite{Yetter,Kauffman-handbook} and the extension from ribbon knots to ribbon graphs is described in \cite{Reshetikhin-Turaev}.

A diagrammatic state sum model that is invariant under these moves is called a spin state sum model -- the most general state sum model with crossings considered in this paper. A diagram with $n$ downward- and $m$ upward-pointing legs defines a map $\otimes^n A \to \otimes^m A$, with the convention that $\otimes^0 A= k$. Therefore, diagrams should be read bottom-to-top and the use of explicit indices has been dropped.

\begin{definition} \label{def:spin-model} 
A spin state sum model $(C,B,R,\lambda)$ is a state sum model with the data $(C,B,R)$ of a planar model, together with a crossing map $\lambda$. The additional axioms the map $\lambda$ obeys are the
 \begin{enumerate}
\setcounter{enumi}{0}
\item \label{fig:axiom_form} compatibility with $B$, \hskip3.35cm 
		$\vcenter{\hbox{
\begingroup%
  \makeatletter%
  \providecommand\color[2][]{%
    \errmessage{(Inkscape) Color is used for the text in Inkscape, but the package 'color.sty' is not loaded}%
    \renewcommand\color[2][]{}%
  }%
  \providecommand\transparent[1]{%
    \errmessage{(Inkscape) Transparency is used (non-zero) for the text in Inkscape, but the package 'transparent.sty' is not loaded}%
    \renewcommand\transparent[1]{}%
  }%
  \providecommand\rotatebox[2]{#2}%
  \ifx\svgwidth\undefined%
    \setlength{\unitlength}{120.86936035bp}%
    \ifx\svgscale\undefined%
      \relax%
    \else%
      \setlength{\unitlength}{\unitlength * \real{\svgscale}}%
    \fi%
  \else%
    \setlength{\unitlength}{\svgwidth}%
  \fi%
  \global\let\svgwidth\undefined%
  \global\let\svgscale\undefined%
  \makeatother%
  \begin{picture}(1,0.21629127)%
    \put(0,0){\includegraphics[width=\unitlength]{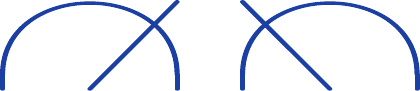}}%
    \put(0.46480483,0.06210709){\color[rgb]{0,0,0}\makebox(0,0)[lb]{\smash{$=$}}}%
  \end{picture}%
\endgroup%
}}$
\item compatibility with $C$, \hskip3.15cm \label{fig:axiom_mult} 
		$\vcenter{\hbox{ 
\begingroup%
  \makeatletter%
  \providecommand\color[2][]{%
    \errmessage{(Inkscape) Color is used for the text in Inkscape, but the package 'color.sty' is not loaded}%
    \renewcommand\color[2][]{}%
  }%
  \providecommand\transparent[1]{%
    \errmessage{(Inkscape) Transparency is used (non-zero) for the text in Inkscape, but the package 'transparent.sty' is not loaded}%
    \renewcommand\transparent[1]{}%
  }%
  \providecommand\rotatebox[2]{#2}%
  \ifx\svgwidth\undefined%
    \setlength{\unitlength}{124.33878174bp}%
    \ifx\svgscale\undefined%
      \relax%
    \else%
      \setlength{\unitlength}{\unitlength * \real{\svgscale}}%
    \fi%
  \else%
    \setlength{\unitlength}{\svgwidth}%
  \fi%
  \global\let\svgwidth\undefined%
  \global\let\svgscale\undefined%
  \makeatother%
  \begin{picture}(1,0.3232729)%
    \put(0,0){\includegraphics[width=\unitlength]{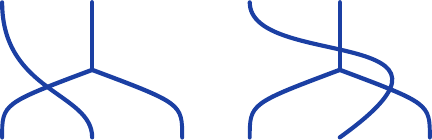}}%
    \put(0.45951805,0.05690025){\color[rgb]{0,0,0}\makebox(0,0)[lb]{\smash{$=$}}}%
  \end{picture}%
\endgroup%
}}$
\item Reidemeister II move (RII),\hskip2.35cm  \label{fig:axiom_square} 
 		$\vcenter{\hbox{ 
\begingroup%
  \makeatletter%
  \providecommand\color[2][]{%
    \errmessage{(Inkscape) Color is used for the text in Inkscape, but the package 'color.sty' is not loaded}%
    \renewcommand\color[2][]{}%
  }%
  \providecommand\transparent[1]{%
    \errmessage{(Inkscape) Transparency is used (non-zero) for the text in Inkscape, but the package 'transparent.sty' is not loaded}%
    \renewcommand\transparent[1]{}%
  }%
  \providecommand\rotatebox[2]{#2}%
  \ifx\svgwidth\undefined%
    \setlength{\unitlength}{75.75253296bp}%
    \ifx\svgscale\undefined%
      \relax%
    \else%
      \setlength{\unitlength}{\unitlength * \real{\svgscale}}%
    \fi%
  \else%
    \setlength{\unitlength}{\svgwidth}%
  \fi%
  \global\let\svgwidth\undefined%
  \global\let\svgscale\undefined%
  \makeatother%
  \begin{picture}(1,0.55428761)%
    \put(0,0){\includegraphics[width=\unitlength]{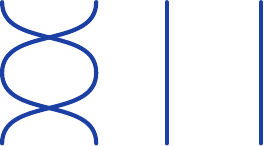}}%
    \put(0.43014569,0.2438102){\color[rgb]{0,0,0}\makebox(0,0)[lb]{\smash{$=$
}}}%
  \end{picture}%
\endgroup%
}}$
\item Reidemeister III move (RIII),\hskip1.2cm  \label{fig:axiom_Reid_3}
 		$\vcenter{\hbox{ 
\begingroup%
  \makeatletter%
  \providecommand\color[2][]{%
    \errmessage{(Inkscape) Color is used for the text in Inkscape, but the package 'color.sty' is not loaded}%
    \renewcommand\color[2][]{}%
  }%
  \providecommand\transparent[1]{%
    \errmessage{(Inkscape) Transparency is used (non-zero) for the text in Inkscape, but the package 'transparent.sty' is not loaded}%
    \renewcommand\transparent[1]{}%
  }%
  \providecommand\rotatebox[2]{#2}%
  \ifx\svgwidth\undefined%
    \setlength{\unitlength}{124.40087891bp}%
    \ifx\svgscale\undefined%
      \relax%
    \else%
      \setlength{\unitlength}{\unitlength * \real{\svgscale}}%
    \fi%
  \else%
    \setlength{\unitlength}{\svgwidth}%
  \fi%
  \global\let\svgwidth\undefined%
  \global\let\svgscale\undefined%
  \makeatother%
  \begin{picture}(1,0.32278936)%
    \put(0,0){\includegraphics[width=\unitlength]{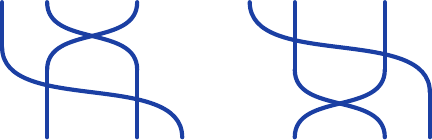}}%
    \put(0.44945526,0.14184983){\color[rgb]{0,0,0}\makebox(0,0)[lb]{\smash{$=$}}}%
  \end{picture}%
\endgroup%
}}$
\item  \label{fig:axiom_left_right} ribbon condition,\hskip4.2cm  
		$\vcenter{\hbox{ 
\begingroup%
  \makeatletter%
  \providecommand\color[2][]{%
    \errmessage{(Inkscape) Color is used for the text in Inkscape, but the package 'color.sty' is not loaded}%
    \renewcommand\color[2][]{}%
  }%
  \providecommand\transparent[1]{%
    \errmessage{(Inkscape) Transparency is used (non-zero) for the text in Inkscape, but the package 'transparent.sty' is not loaded}%
    \renewcommand\transparent[1]{}%
  }%
  \providecommand\rotatebox[2]{#2}%
  \ifx\svgwidth\undefined%
    \setlength{\unitlength}{75.31722412bp}%
    \ifx\svgscale\undefined%
      \relax%
    \else%
      \setlength{\unitlength}{\unitlength * \real{\svgscale}}%
    \fi%
  \else%
    \setlength{\unitlength}{\svgwidth}%
  \fi%
  \global\let\svgwidth\undefined%
  \global\let\svgscale\undefined%
  \makeatother%
  \begin{picture}(1,0.59036449)%
    \put(0,0){\includegraphics[width=\unitlength]{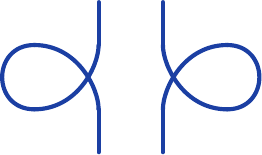}}%
    \put(0.44899274,0.26275822){\color[rgb]{0,0,0}\makebox(0,0)[lb]{\smash{$=$}}}%
  \end{picture}%
\endgroup%
}} \hspace{3mm}$.
\end{enumerate}
\end{definition}
Either side of axiom \ref{fig:axiom_left_right} defines a map, $\varphi \colon A \to A$, and the axioms \ref{fig:axiom_form}, \ref{fig:axiom_square} and \ref{fig:axiom_Reid_3}
 imply, via the Whitney trick \cite{kauffman-regular-isotopy}, that $\varphi^2=\id$. Either diagram in axiom  \ref{fig:axiom_left_right} is called a curl.

There are two issues to settle: the possible dependence of the state sum model on the triangulation of the surface, and on the immersion $\phi$. The former is the easiest to resolve: since any planar state sum model is invariant under Pachner moves the following lemma is automatically verified.

\begin{lemma}\label{lem:spin-triangulation} The partition function of a spin state sum model is independent of the triangulation of the surface.
\end{lemma}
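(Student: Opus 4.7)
The plan is to reduce this to Pachner-move invariance of the underlying planar state sum model (Theorem~\ref{theo:diagram}), which has already been established. Since any two triangulations of a closed surface are connected by a finite sequence of the 2--2 and 1--3 Pachner moves (cited via \cite{Pachner,Lickorish-moves}), it suffices to show that performing a single Pachner move on the triangulation does not change the value of the partition function $Z(M)=R^V|\gamma|$.

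First I would observe that a Pachner move is supported in a disk $D\subset\Sigma$: the triangulation, and hence the dual graph, is altered only inside $D$, while outside $D$ everything remains the same. Then I would argue that the immersion $\phi\colon\Sigma\to\Rb^3$ representing the spin structure may, up to regular homotopy, be chosen so that $\phi(D)$ lies in a thin slab whose projection to $\Rb^2$ is an orientation-preserving embedding of $D$ and whose image in the diagram avoids all other pieces of $\gamma$. In this position the projected ribbon graph restricted to $D$ is planar and blackboard-framed, with no crossings inside it and no crossings with anything else.

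With such a choice, the diagram for $\gamma$ splits as an ``outside'' part (unchanged by the Pachner move, and carrying all of the crossings of $\gamma$) grafted onto an ``inside'' part contained in a planar disk of $\Rb^2$. The Pachner move changes only the inside part, and by Theorem~\ref{theo:diagram} the two resulting planar subdiagrams evaluate to the same element of the appropriate $\otimes^n A\to\otimes^m A$ hom-space. Substituting this equality back into the global diagram evaluation leaves $|\gamma|$ unchanged. The factor $R^V$ is also unchanged: the 2--2 move preserves the number of interior vertices, while the 1--3 move adds one interior vertex which is compensated by the extra closed region in the dual diagram, exactly the factor $R$ that the proof of Theorem~\ref{theo:diagram} attaches to each region (the remark made just after that proof).

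The main obstacle is the geometric step of arranging the immersion so that the Pachner move really is happening in a crossing-free disk of the planar diagram; this is a standard general-position argument using regular homotopy to push the rest of the surface away from a chosen neighborhood of $D$, but it must be stated carefully so that the moves of Definition~\ref{def:spin-model} are not needed in this step. Once this is in place, the lemma follows by direct appeal to Theorem~\ref{theo:diagram}.
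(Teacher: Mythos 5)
Your proof is correct and follows essentially the same route as the paper, which treats the lemma as ``automatically verified'' by the Pachner-move invariance of the underlying planar model (theorem~\ref{theo:diagram}); your write-up simply makes explicit the localization of the move to a disk whose projection is crossing-free. One small correction: there is no need (and indeed no way) to avoid the moves of definition~\ref{def:spin-model} in the repositioning step — a spin state sum model is by definition invariant under regular homotopy of the immersed ribbon graph, so using that invariance to flatten the disk is legitimate and involves no circularity.
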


An interesting class of examples arises for $G$-graded algebras $A=\bigoplus_{h \in G} A_h$ where $G$ is an abelian group. Crossing maps can then be constructed from bicharacters \cite{Bahturin}. A bicharacter $\tilde{\lambda} \colon G \times G \to k$ is defined by 
\begin{align} 
\tilde{\lambda}(h,jl)&=\tilde{\lambda}(h,j)\tilde{\lambda}(h,l), \label{eq:graded_mult}\\
1&=\tilde{\lambda}(h,j)\tilde{\lambda}(j,h). \label{eq:graded_inverse}
\end{align}   
The candidate for a crossing map $\lambda$ is then determined by setting 
\begin{equation}\lambda(a_h \otimes b_j)=\tilde{\lambda}(h,j)\; b_j \otimes a_h \in A_j \otimes A_h.\end{equation}
With this definition, it is straightforward to conclude properties \eqref{eq:graded_mult} and \eqref{eq:graded_inverse} of a bicharacter $\tilde{\lambda}$ are in correspondence with the crossing axioms \ref{fig:axiom_mult} and \ref{fig:axiom_square} of definition \ref{def:spin-model}. On the other hand, axiom \ref{fig:axiom_Reid_3} is automatically verified since $\tilde{\lambda}$ is $k$-valued. The remaining conditions, however, impose new constraints on a bicharacter. Write $A_h\perp A_j$ if $\varepsilon(a_h \cdot b_j)=0$ for all $a_h\in A_h$, $b_j\in A_j$.
\begin{lemma} \label{lem:graded} A graded Frobenius algebra with a bicharacter  $\tilde\lambda$ determines a spin state sum model if and only if
\begin{enumerate}
\item\label{it:cr-axiom1} For each $h,j\in G$, either  $A_h\perp A_j$ or $\tilde{\lambda}(h,l)=\tilde{\lambda}(l,j)$  for all $l \in G$.
\item\label{it:cr-axiom2} The Nakayama automorphism $\sigma$ obeys $\sigma^2=\id$.
\end{enumerate}
\end{lemma}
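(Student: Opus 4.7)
The plan is to inspect each of the five crossing axioms in Definition \ref{def:spin-model} and determine what constraint each imposes on a graded Frobenius algebra $A=\bigoplus_{h\in G}A_h$ equipped with a bicharacter crossing. As already noted in the paragraph preceding the lemma, axioms \ref{fig:axiom_mult} and \ref{fig:axiom_square} are equivalent to the bicharacter properties \eqref{eq:graded_mult} and \eqref{eq:graded_inverse}, and axiom \ref{fig:axiom_Reid_3} holds automatically because $\tilde\lambda$ takes values in $k$ (so the three strands can be permuted freely up to a scalar that cancels between the two sides). The substance of the proof is therefore to show that axiom \ref{fig:axiom_form} is equivalent to (1) and axiom \ref{fig:axiom_left_right} is equivalent to (2), given that the other three already hold.

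For axiom \ref{fig:axiom_form}, I would evaluate both sides on a homogeneous input. Writing $B=\sum_{h,j}B_{h,j}$ with $B_{h,j}\in A_h\otimes A_j$, the component $B_{h,j}$ can be nonzero only when $\varepsilon\circ m$ is nondegenerate on $A_h\otimes A_j$, i.e., precisely when $A_h\not\perp A_j$. Passing a strand of grade $l$ across the cap on the two sides of the axiom picks up the bicharacter scalars $\tilde\lambda(l,h)$ and $\tilde\lambda(j,l)$ (or the inverses thereof, depending on the orientation) on the respective $A_h$ and $A_j$ legs. The axiom thus collapses to the equality $\tilde\lambda(l,h)=\tilde\lambda(j,l)$ for every $l\in G$ and every pair $(h,j)$ with $B_{h,j}\neq 0$, and after using \eqref{eq:graded_inverse} to swap arguments this is exactly the equality in condition (1); for pairs with $A_h\perp A_j$ the axiom imposes nothing, which matches the first disjunct.

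For axiom \ref{fig:axiom_left_right}, each side defines the map $\varphi$ discussed just after the definition. Restricted to $A_h$, both the left and right curls act as a scalar multiple of the Nakayama automorphism $\sigma$ of Definition \ref{Nakayama}: unwinding the definitions, the left curl is $\tilde\lambda(h,h)\,\sigma|_{A_h}$ and the right curl is $\tilde\lambda(h,h)^{-1}\,\sigma^{-1}|_{A_h}$. Their equality therefore reduces to $\tilde\lambda(h,h)^{2}\,\sigma^{2}|_{A_h}=\id$; the scalar factor equals $1$ by \eqref{eq:graded_inverse}, so the axiom is equivalent to $\sigma^{2}=\id$, which is condition (2). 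Running the identifications backward gives the converse direction of the lemma.

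The main obstacle I anticipate is the index bookkeeping in the first step, in particular verifying that the disjunction in condition (1) is exactly the right one: that ``$A_h\perp A_j$'' is precisely the regime where the axiom holds trivially, and that the symmetric placements of the strand on the two sides of the cap produce equivalent conditions after applying \eqref{eq:graded_inverse}. The ribbon calculation is essentially mechanical once the Nakayama action is correctly identified, being a direct translation of lemma \ref{lem:spherical} combined with the fact that a scalar crossing commutes with itself.
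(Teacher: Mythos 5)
Your overall strategy is the same as the paper's --- axioms \ref{fig:axiom_mult} and \ref{fig:axiom_square} are the bicharacter identities, RIII is automatic, axiom \ref{fig:axiom_form} should give condition (1) and the ribbon axiom condition (2) --- but the translation of axiom \ref{fig:axiom_form} is where the proposal has a genuine gap. You index the constraint extracted from the axiom by the pairs $(h,j)$ with $B_{h,j}\neq 0$ and bridge to condition (1) via the claim that $B_{h,j}\neq 0$ precisely when $A_h\not\perp A_j$ (which you moreover equate with nondegeneracy of $\varepsilon\circ m$ on $A_h\otimes A_j$, a strictly stronger condition than non-perpendicularity). That claim is neither proved nor true for a general graded Frobenius algebra: for $A=k[x]/(x^2)$ with $x$ in the odd degree of a $\Zb_2$-grading and $\varepsilon(1)=\varepsilon(x)=1$, one has $A_1\perp A_1$ yet $B$ contains the nonzero term $-x\otimes x\in A_1\otimes A_1$; even in the special/semisimple setting relevant here it would need an argument. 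The paper needs no such statement: evaluating the cap form of axiom \ref{fig:axiom_form} directly on homogeneous $a_h\otimes c_l\otimes b_j$ gives $\tilde\lambda(l,j)\,\varepsilon(a_h\cdot b_j)\,c_l=\tilde\lambda(h,l)\,\varepsilon(a_h\cdot b_j)\,c_l$, and since $\varepsilon(a_h\cdot b_j)$ is the only scalar present this is condition (1) verbatim, the alternative $A_h\perp A_j$ arising exactly when that scalar vanishes identically. The constraint on the graded components of $B$ --- that $\tilde\lambda(n,l)=\tilde\lambda(l,m)$ for every term $y_m\otimes z_n$ of $B$ and all $l$ --- is then obtained separately, from the form of axiom \ref{fig:axiom_form} rotated by $\pi$ (equivalent to it via the snake identity), not from a comparison of supports.

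This second identity is also what your curl computation silently uses. ``Unwinding the definitions'', the curl applied to $a_l$ is $\sum\tilde\lambda(l,m)\,\varepsilon(a_l\cdot z_n)\,y_m$, in which the scalar $\tilde\lambda(l,m)$ varies with the grade $m$ of the term of $B$; it collapses to $\tilde\lambda(l,l)\,\sigma^{\mp1}(a_l)$ only after invoking the rotated-axiom identity together with condition (1) applied to the nonvanishing terms (giving $\tilde\lambda(l,m)=\tilde\lambda(l,l)$). Lemma \ref{lem:spherical} does not supply this step, so the ribbon analysis is not mechanical until the axiom-\ref{fig:axiom_form} bookkeeping has been done. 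Once that identification is in place, your reduction $\tilde\lambda(h,h)^2\sigma^2=\id\iff\sigma^2=\id$ agrees with the paper's argument, as does your treatment of the remaining axioms and of the converse.
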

\begin{proof} 
Applying the maps in axiom \ref{fig:axiom_form} of definition \ref{def:spin-model} to $a_h\otimes c_l\otimes b_j$ gives
\begin{equation}\tilde\lambda(l,j)\,\varepsilon(a_h\cdot b_j)\,c_l=\tilde\lambda(h,l)\,\varepsilon(a_h\cdot b_j)\,c_l,\end{equation}
 which is equivalent to condition \ref{it:cr-axiom1}.

The element $B$ can be written as a sum of linearly independent terms as $B=\sum y_m\otimes z_n$, in which the gradings $m$ and $n$ may vary. An equivalent relation to condition \ref{it:cr-axiom1} is that for each term $y_m\otimes z_n$ in the sum,
\begin{equation}\tilde\lambda(n,l)=\tilde\lambda(l,m) \text{ for all } l\in G.\label{eq:bichar-identity}\end{equation}
This can be proved by using an equivalent form of axiom \ref{fig:axiom_form} of definition \ref{def:spin-model} given by rotating both diagrams in the expression by $\pi$. Then applying the maps on both sides of the equation to $a_l$ gives the identity
\begin{equation}\sum\tilde\lambda(l,m)\, y_m\otimes a_l\otimes z_n=\sum\tilde\lambda(n,l)\, y_m\otimes a_l\otimes z_n.\end{equation}

The curl on the right-hand side of axiom \ref{fig:axiom_left_right} is the map 
\begin{equation}\label{eq:bichar-curl} a_l\mapsto\sum \varepsilon(a_l \cdot z_n)\tilde\lambda(l,m)\,y_m.
\end{equation}
However, from \eqref{eq:bichar-identity}, $\tilde\lambda(l,m)=\tilde\lambda(n,l)$ and for the non-zero terms in \eqref{eq:bichar-curl}, $\tilde\lambda(l,l)=\tilde\lambda(l,n)$. Together these imply $\tilde\lambda(l,m)=\tilde\lambda(l,l)$. Hence the curl is
\begin{equation}\varphi(a_l)=\tilde\lambda(l,l)\sum\varepsilon(z_n \cdot \sigma^{-1}(a_l))\,y_m=\tilde\lambda(l,l)\sigma^{-1}(a_l). \label{eq:graded_group_Naka}
\end{equation}
Since axiom \ref{fig:axiom_left_right} is equivalent to $\varphi^2=\id$, and \eqref{eq:graded_inverse} implies $\tilde\lambda(l,l)^2=1$,  the axioms of definition \ref{def:spin-model} imply that $\sigma^2=\text{id}$. Conversely, $\sigma^2=\text{id}$ together with axioms \ref{fig:axiom_form} to \ref{fig:axiom_Reid_3} imply that axiom \ref{fig:axiom_left_right} is satisfied.
\end{proof}

The spin state sum models are analysed fully in \S \ref{sec:spinssm}. 

\subsection{Curl-free models}\label{sec:curlfreemodels}

This section discusses a particular class of spin state sum models for which the data satisfy one additional axiom, $\varphi=\id$. These models are called curl-free.
 Diagrammatically, this is the
\begin{enumerate}
\setcounter{enumi}{5}
\item  \label{ax:RI} Reidemeister I move (RI),\hskip1cm
		$\vcenter{\hbox{
\begingroup%
  \makeatletter%
  \providecommand\color[2][]{%
    \errmessage{(Inkscape) Color is used for the text in Inkscape, but the package 'color.sty' is not loaded}%
    \renewcommand\color[2][]{}%
  }%
  \providecommand\transparent[1]{%
    \errmessage{(Inkscape) Transparency is used (non-zero) for the text in Inkscape, but the package 'transparent.sty' is not loaded}%
    \renewcommand\transparent[1]{}%
  }%
  \providecommand\rotatebox[2]{#2}%
  \ifx\svgwidth\undefined%
    \setlength{\unitlength}{50.54191895bp}%
    \ifx\svgscale\undefined%
      \relax%
    \else%
      \setlength{\unitlength}{\unitlength * \real{\svgscale}}%
    \fi%
  \else%
    \setlength{\unitlength}{\svgwidth}%
  \fi%
  \global\let\svgwidth\undefined%
  \global\let\svgscale\undefined%
  \makeatother%
  \begin{picture}(1,0.93768129)%
    \put(0,0){\includegraphics[width=\unitlength]{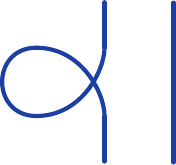}}%
    \put(0.69298967,0.40858098){\color[rgb]{0,0,0}\makebox(0,0)[lb]{\smash{$=$}}}%
  \end{picture}%
\endgroup%
}}\hspace{3mm}.$
\end{enumerate}

The main issue is the dependence of the partition function on the immersion. Consider a standard immersion $\phi_0$ that is an embedding of the closed oriented surface of genus $g$ into $\Rb^3$. A triangulation of the surface $\Sigma$ can be constructed by identifying the edges of a $4g$-sided polygon, as in figure \ref{fig:4gpoly}, and dividing it into triangles without introducing any new vertices. Let $S\subset\Sigma$ be the subset obtained by removing a disk neighbourhood of the vertex of the polygon from $\Sigma$.  The embedding is such that $S$ projects to $\Rb^2$ by the immersion shown in figure \ref{fig:surface-diagram}. The dual graph to the triangulation is shown in the figure \ref{fig:dual-graph} with all of the graph vertices consolidated into one. 
\begin{figure}
\centering
\begin{subfigure}[t!]{0.47\textwidth}
		\centering
		\includegraphics{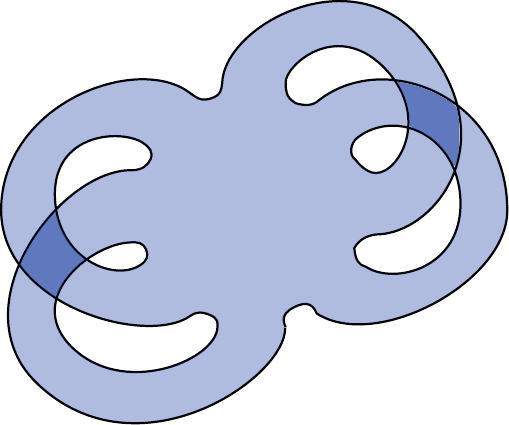}
		\vspace{4mm}
		\caption{\emph{Standard projection.} The projection of $S$ into $\Rb^2$ for the standard embedding of $\Sigma_2$. Dark-shaded regions represent areas of intersection.}
                \label{fig:surface-diagram}
\end{subfigure} 
\hspace{5mm}
\begin{subfigure}[t!]{0.47\textwidth}
		\centering
		\vspace{9mm}
		\includegraphics{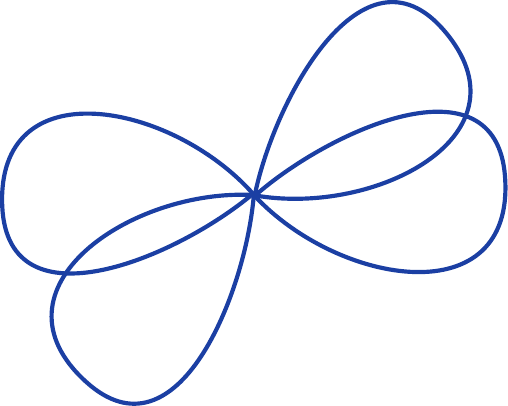}
		\vspace{4mm}
		\caption{\emph{Standard diagram} $\gamma_g$. The dual graph of $\Sigma_2$ resulting from the standard embedding in $\Rb^3$, denoted $\gamma_2$. Thickening to a ribbon graph results in figure \ref{fig:surface-diagram}. The standard diagram for a surface $\Sigma_g$ is denoted $\gamma_g$. }
                \label{fig:dual-graph}
\end{subfigure} 
\caption{}
\end{figure}
\begin{lemma}\label{lem:embedding} The partition function of a curl-free state sum model for a closed surface is independent of the immersion $\phi$.
\end{lemma}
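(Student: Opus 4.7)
The plan is to reduce invariance under immersion to invariance of the planar diagram under a set of local moves, all of which are axioms of the curl-free spin state sum model. First, fix any triangulation of $\Sigma$; by lemma \ref{lem:spin-triangulation} the partition function does not depend on this choice, so it suffices to compare the diagrammatic evaluations arising from two different immersions $\phi_0, \phi_1\colon\Sigma\to\Rb^3$ with the same dual ribbon graph $G\subset\Sigma$. Each immersion, composed with a generic projection to $\Rb^2$ and adjusted to blackboard framing, produces a planar diagram whose evaluation is $|\gamma|$. The aim is to exhibit a sequence of local moves taking the diagram of $\phi_0$ to that of $\phi_1$, and to check that each such move preserves the state sum evaluation.

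The argument splits into two parts according to regular homotopy classes. For $\phi_0$ and $\phi_1$ in the same regular homotopy class, there is a one-parameter family of immersions $\phi_t$ connecting them. After a generic perturbation, the induced family of ribbon graphs $\gamma_t=\phi_t|_G$ in $\Rb^3$ is a regular homotopy through embeddings except at finitely many transverse double points, at each of which one arc passes through another (as in figure \ref{fig:regular-homotopy}). After composing with projection, these transitions together with generic deformations of the diagram correspond exactly to the framed Reidemeister II and III moves and to the $B$- and $C$-compatibility moves of \cite{kauffman-regular-isotopy,Reshetikhin-Turaev}. These are precisely axioms \ref{fig:axiom_form}--\ref{fig:axiom_Reid_3} of definition \ref{def:spin-model}, so the evaluation is unchanged.

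For $\phi_0$ and $\phi_1$ in different regular homotopy classes, I invoke Pinkall's classification \cite{pinkall}: the regular homotopy classes of immersions of a closed oriented surface in $\Rb^3$ are in bijection with spin structures on the surface. Any two classes differ by a sequence of local modifications supported in a small disk (the ``figure-eight'' or ``eversion'' moves used in \cite{pinkall}) which, after projection to the plane, introduce or remove a curl on a single edge of the ribbon graph. Axiom \ref{ax:RI} -- the curl-free condition $\varphi=\id$ -- states exactly that such a move leaves the evaluation invariant. Therefore, moving between regular homotopy classes also preserves the partition function, and combining the two steps yields the claim.

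The main obstacle in making this rigorous is matching the local changes in surface immersions precisely to the diagrammatic moves listed in the axioms, particularly verifying that a change of spin structure (equivalently, the generator of the $\Zb/2$ difference between Pinkall's regular homotopy invariants) is realised in the projected diagram as a single application of a curl on a ribbon edge, and that generic transitions in a family of ribbon graph projections are exhausted by RII, RIII, and the $B$/$C$ compatibility moves. Once this catalogue of moves is checked against the six axioms of the curl-free model, invariance under arbitrary immersion follows.
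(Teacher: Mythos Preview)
Your approach is workable but takes a longer route than the paper's, and the second half carries the gap you yourself flag.

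The paper does not split into ``same regular homotopy class'' versus ``different class'' and does not invoke Pinkall's surface classification at all. Instead it chooses a \emph{particular} triangulation (the $4g$-gon with all triangles sharing one vertex), so the dual ribbon graph has a single consolidated vertex and each edge is a loop. Given any immersion $\phi$, one first adjusts a neighbourhood of this vertex to match the standard diagram $\gamma_g$; then the loops are independent and the question becomes a question about immersed circles in $\Rb^2$. By the Whitney--Graustein theorem, each loop differs from its counterpart in $\gamma_g$ only by its Whitney degree, i.e., by a finite number of curls, which cancel via the ribbon condition and RI. So every immersion is compared to a fixed normal form $\phi_0$, and the only input beyond axioms \ref{fig:axiom_form}--\ref{fig:axiom_left_right} is Whitney--Graustein for plane curves.

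Your Part~1 is fine and corresponds to the regular-homotopy invariance that the spin axioms already provide. Your Part~2, however, tries to transport Pinkall's local modifications of \emph{surface} immersions to curls on \emph{ribbon-graph} edges in the projected diagram. This is precisely the step you identify as an obstacle, and it is genuinely delicate: a local eversion of the surface inside a disk need not meet the dual graph at all, so its effect on the diagram is not manifestly a curl on an edge. One can repair this, but the repair essentially reproduces the paper's argument: forget the surface, work directly with the ribbon graph in the plane, and use Whitney--Graustein loop by loop. That replaces a hard matching problem (surface moves $\leftrightarrow$ ribbon curls) with an elementary classification of immersed circles, which is why the paper's proof is shorter and avoids the gap entirely.
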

\begin{proof} Consider  a ribbon graph $K\subset\Sigma$ and an immersion $\phi\colon \Sigma\to \Rb^3$. The immersion of the ribbon graph is moved by regular homotopy to $\psi\colon K\to\Rb^3$ that is blackboard-framed with respect to the projection $P\colon\Rb^3\to\Rb^2$, $P(x,y,z)=(x,y)$. Further, a neighbourhood of the consolidated vertex in the diagram can be moved to match a neighbourhood of the vertex in figure \ref{fig:dual-graph}.  Then each ribbon loop of $K$ can be moved independently, keeping the neighbourhood of the vertex fixed. 

According to the  Whitney-Graustein theorem \cite{kauffman-regular-isotopy}, immersed circles in $\Rb^2$ under regular homotopy (in $\Rb^2$) are classified by the Whitney degree, which is the integer that measures the number of windings of the tangent vector to the circle. This regular homotopy extends to a regular homotopy of the ribbon graph in $\Rb^2$. Then it lifts to a regular homotopy of the ribbon graph $\psi$ in $\Rb^3$, by keeping the $z$-coordinate constant in the homotopy. Therefore each loop of $K$ is regular-homotopic to the corresponding loop of figure \ref{fig:dual-graph}, but with a number of curls. The curls can be cancelled using the ribbon condition and the move RI. Thus the partition function is the same as for $\phi_0$. 
\end{proof}

These results imply the partition function of a curl-free model is indeed a topological invariant. Let $f\colon\Sigma'\to\Sigma$ be a diffeomorphism. If $\Sigma$ is a triangulated surface and $\phi\colon\Sigma\to\Rb^3$ is an immersion, then $f$ induces a triangulation and an immersion for $\Sigma'$ such that their dual graph diagrams in the plane coincide.

Some examples of curl-free models are studied in the rest of this section. First it is shown how the naive state sum models of \S\ref{sec:lattice_tft} fit within the new formalism.
\begin{example}\label{ex:FHK}
An FHK state sum model as defined in \S\ref{sec:lattice_tft} is a curl-free state sum model where the choice of crossing is canonical. In other words, the map $\lambda \colon A \otimes A \to A \otimes A$ takes $a \otimes b \mapsto b \otimes a$. 
\end{example}

Next, examples determined by a bicharacter are studied.
Axiom \ref{ax:RI} is $\varphi(a_l)=a_l$; one can therefore conclude that  $\sigma$ preserves the $G$-grading and, according to \eqref{eq:graded_group_Naka}, obeys the eigenvector equation $\sigma(a_l)=\tilde{\lambda}(l,l)\, a_l$. 

Explicit examples of matrix algebras that can be equipped with this type of crossing are now presented.
\begin{example}[Algebras $A=\Mb_n(k)$, $k=\Rb,\Cb$] \label{ex:non_symmetric_Frobenius}
\noindent Let $\varepsilon(a)=R(p-q)\Tr(ua)$ with $u=\text{diag}(p,q)$ the diagonal matrix with the first $p>0$ diagonal entries equal to $+1$ and the remaining $q=n-p>0$ entries equal to $-1$, such that $p\neq q$. The algebra $A$ has a natural $\Zb_2$-grading $A_0 \bigoplus A_1$. Each matrix splits into a block-diagonal and a block-anti-diagonal part.
\begin{align}
\left(\begin{array}{cc}
a_{p\times p} & b_{p \times q}\\ c_{q \times p} & d_{q \times q}
\end{array}\right)
=
\left(\begin{array}{cc}
a_{p\times p} & 0\\ 0 & d_{q \times q}
\end{array}\right)
\oplus
\left(\begin{array}{cc}
0 & b_{p \times q}\\ c_{q \times p} & 0
\end{array}\right)
\in A_0 \bigoplus A_1.
\end{align}
\noindent It is easy to verify there is a unique $\Zb_2$-bicharacter $\tilde{\lambda}$ that can be constructed for this algebra for which $\lambda$ is a curl-free crossing. Identity \eqref{eq:graded_mult} implies $\tilde{\lambda}(0,h)=\tilde{\lambda}(h,0)=1$. Identity \eqref{eq:graded_inverse} implies $\tilde{\lambda}(1,1)=\pm1$ but the choice $\tilde{\lambda}(1,1)=1$ is not allowed as $\varphi=\sigma \neq \text{id}$ would follow. The components of $\lambda$ are therefore determined by the relation $\tilde{\lambda}(h,j)=(-1)^{hj}$ with $h,j =0,1$. 
The bilinear form can be written as
\begin{align}
B = \frac{1}{R(p-q)} \sum_{lm,h}e_{lm}^{h}u \otimes e_{ml}^{h}
\end{align}
where the label $h$ identifies whether the elementary matrices belong to $A_0$ or $A_1$. The standard diagram $\gamma_g$ to be associated with a closed surface of genus $g$ (see figure~\ref{fig:dual-graph}) can be used to write the partition function \eqref{eq:diag-spin} as
\begin{align}
Z(\Sigma_g)=R\varepsilon(\eta^g) \text{ with }\eta=\frac{1}{R^2(p-q)^2} \sum_{lmrs,hj}e_{lm}^{h}ue_{rs}^{j}ue_{ml}^{h}e_{sr}^{j}\, .
\end{align}
Notice that $\sigma(a)=uau$. The simplification $\sum_{lm,h}e_{lm}^{h}ue_{rs}^{j}ue_{ml}^{h}=\Tr(\sigma(e_{rs}^{j}))1$ follows. If $j=1$, $\Tr(\sigma(e_{rs}^{j}))$ vanishes since $\sigma$ preserves the grading and block-anti-diagonal matrices are traceless. If $j=0$ then $\Tr(\sigma(e_{rs}^{j}))=\delta_{rs}$ and consequently $z=R^{-2}(p-q)^{-2}1$. Therefore, the partition function reads
\begin{equation}Z(\Sigma_g)=R^{2-2g}(p-q)^{2-2g}.\end{equation}
This formula does not coincide with the partition function \eqref{eq:partitionfunction} determined for $\Mb_n(k)$ seen as an FHK state sum model,  but it does reduce to it by setting  $q=0$. (For $q=0$ the canonical crossing is the acceptable choice.)  
\end{example}

A natural question is whether algebras with symmetric Frobenius forms, and a crossing respecting the conditions of definition~\ref{def:spin-model} and the curl-free condition always give rise to an FHK state sum model. This is not, however, the case as it can be seen by the explicit example below.

\begin{example}[Algebras $A=\Mb_n(\Cb)$]
As studied in \cite{Bahturin}, $\Mb_n(\Cb)$ can be regarded as a $\Gamma_n$-graded algebra where the group is a direct product of two cyclic groups of order $n$: $\Gamma_n = \langle a\rangle \times \langle b\rangle$. This means $\Mb_{n}(\Cb)$ is decomposed into $n^2$ components and it is natural to pick as a basis $n^2$ matrices that respect this decomposition. Let $\xi \in \Cb$ be a  primitive $n$-th root of unity and define the matrices $X_a=\text{diag}(\xi^{n-1},\cdots,\xi,1)$ and $Y_b=e_{n1}+\sum_{m=1}^{n-1}e_{m(m+1)}$, as in \cite{Bahturin}. Then, $X_a^iY_b^j$ generates the $a^ib^j$ component of the algebra and the expression
\begin{align}
\tilde{\lambda}(a^ib^j,a^{i'}b^{j'})=\xi^{ij'-i'j}
\end{align}
defines a bicharacter \cite{Bahturin}. 

It must be verified that the remaining conditions of lemma \ref{lem:graded} and the Reidemeister I move hold. From \eqref{eq:graded_group_Naka}, $\sigma(X_a^iY_b^j)=\tilde{\lambda}(a^ib^j,a^{i}b^{j})X_a^iY_b^j$. Since $\tilde{\lambda}(a^ib^j,a^{i}b^{j})=1$ it follows that $\sigma= \text{id}$, or, equivalently, that the Frobenius form is symmetric. In other words, $\varepsilon(y)=Rn\Tr(y)$.

For condition \ref{it:cr-axiom1} of lemma \ref{lem:graded} one first shows that $\varepsilon (X_a^iY_b^jX_a^{i'}Y_b^{j'})=0$ unless $i+i'=j+j'=0$, a fact that follows from the identity $X_a^iY_b^jX_a^{i'}Y_b^{j'}=\xi^{-ji'}X_a^{i+i'}Y_b^{j+j'}$ and the symmetry of $\varepsilon$. When $i+i'=j+j'=0$ the required identity for the bicharacter reduces to $\tilde{\lambda}(h,j)=\tilde{\lambda}(j,h^{-1})$ for all $h,j$, which is always true.

Only for $n=1$ does $\lambda$ coincide with the canonical crossing. The invariant created is $Z(\Sigma_g)=R^{2-2g}n^2$, which differs from expression \eqref{eq:partitionfunction}. 
\end{example}
\subsection{Spin models}\label{sec:spinssm}

The purpose of this section is to study spin state sum models and show that these are defined on a surface with a spin structure. Our ultimate objective is to introduce a crossing that  distinguishes topologically-inequivalent spin structures and several examples of such algebras will be studied. 

The usual notion of spin structure is defined for oriented smooth manifolds using the tangent bundle. Each immersed curve $c$ on the manifold lifts to a curve in the frame bundle $F$ and the spin structure $s\in H^1(F,\Zb_2)$ assigns to this an element $s(c)\in\Zb_2$. This assignment can be characterised by a skein relation on a vector space generated by curves on the manifold \cite{JBspin}, a description that does not require the use of the tangent bundle (and so generalises to piecewise-linear manifolds).
 
On an oriented surface there is an even simpler description \cite{kirby} of a spin structure as a quadratic form on the  first homology with $\Zb_2$ coefficients, $q\colon H_1(\Sigma,\Zb_2)\to\Zb_2$ . The quadratic form $q$ is defined by taking an embedded curve $c$ to represent a cycle and setting $q(c)=s(c)+1 \mo 2$. The quadratic form satisfies the relation $q(x+y)=q(x)+q(y)+x.y$, with $x.y$ the intersection form for mod 2 homology, and so is determined by its values on a basis of $H_1(\Sigma,\Zb_2)$.

The immersions of a smooth surface into $\Rb^3$ are classified in \cite{pinkall}, where it is shown that there are $2^{2g}$ inequivalent regular homotopy equivalence classes. Each immersion $\phi\colon\Sigma\to\Rb^3$ determines an induced spin structure on $\Sigma$ by pulling-back the unique spin structure on $\Rb^3$. The induced spin structure is invariant under a regular homotopy (since the homotopy is differentiable). There are $2^{2g}$ spin structures on an oriented surface and these classify the equivalence classes of immersions uniquely. This can be seen by explicitly constructing an immersion that corresponds to each spin structure. A spin structure on $\Sigma$ is determined uniquely by a spin structure on the subset $S\subset\Sigma$ obtained by removing a disk. The surface $S$ can be embedded in $\Rb^3$ so that the projection to $\Rb^2$ is an immersion as in figure \ref{fig:surface-diagram}, or a modification of it by putting a curl in any of the $2g$ ribbon loops. The spin structure is read off from this diagram: $s(c)$ is the Whitney degree mod 2 for the projection of $c$ to $\Rb^2$.  For example, for the embedding $\phi_0$ each circle $c$ in figure \ref{fig:dual-graph} has no curls and so $q(c)=0$.  It is worth noting that this explicit construction of $q$ does not require a smooth structure and makes sense also for a piecewise-linear surface.

\begin{lemma} \label{spin-embedding} The partition function of a spin state sum model on $\Sigma$ depends on the immersion $\phi\colon \Sigma\to\Rb^3$  only via the spin structure induced on $\Sigma$.
\end{lemma}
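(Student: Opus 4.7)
The plan is to combine two ingredients. First, the partition function $Z(\Sigma,\phi)$ is invariant under regular homotopy of the immersion $\phi$. Second, by Pinkall's classification~\cite{pinkall} cited just above, two immersions induce the same spin structure on $\Sigma$ if and only if they are regular homotopic. Putting these together gives the conclusion.

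To establish the first ingredient, fix any triangulation of $\Sigma$; by lemma~\ref{lem:spin-triangulation} the choice is immaterial, and I would choose one for which the associated dual graph $G$ embeds nicely in a ribbon neighbourhood $K \subset \Sigma$. A regular homotopy $\phi_t \colon \Sigma \to \Rb^3$ restricts to a regular homotopy of the ribbon graph immersion $\phi_t|_K$. After an arbitrarily small perturbation the projection $P \colon \Rb^3 \to \Rb^2$ yields a blackboard-framed generic diagram at each stage, and the invariant $|\gamma|$ that enters the partition function $Z(M)=R^V|\gamma|$ of \eqref{eq:diag-spin} is read off from this diagram.

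The key check is that this invariant is well defined on regular-homotopy classes of ribbon graph immersions in $\Rb^3$. This is where the axioms of definition~\ref{def:spin-model} are designed to intervene: compatibility with $B$ and $C$, RII, RIII, and the ribbon condition are precisely the generalised Reidemeister moves for blackboard-framed ribbon graph diagrams modulo regular homotopy, as described in \cite{kauffman-regular-isotopy,freyd-yetter,Yetter,Reshetikhin-Turaev} and referenced at the start of \S\ref{sec:crossing}. Crucially RI is \emph{not} required, which is what will allow $Z$ to depend non-trivially on the curl-count of loops in the diagram, that is, on the spin structure.

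With the first ingredient established, I would conclude as follows. If $\phi_0$ and $\phi_1$ induce the same spin structure $s$ on $\Sigma$, then by Pinkall's bijection between the $2^{2g}$ regular-homotopy classes of immersions $\Sigma_g \to \Rb^3$ and the $2^{2g}$ spin structures on $\Sigma_g$, there is a regular homotopy between $\phi_0$ and $\phi_1$; invariance of $|\gamma|$ under this homotopy then gives $Z(\Sigma,\phi_0) = Z(\Sigma,\phi_1)$, so the partition function descends to a function of $(\Sigma,s)$. The main obstacle is really the bookkeeping in the first ingredient — verifying that the five spin-model axioms exhaust the moves needed for generalised ribbon Reidemeister equivalence (and that the blackboard-framed adjustment can be performed fibre-wise in $t$) — but this is essentially a matter of carefully invoking the references, since the axioms of definition~\ref{def:spin-model} were set up with exactly this equivalence relation in mind.
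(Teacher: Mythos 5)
Your argument is correct, but it reaches the lemma by a different route than the paper. The paper's own proof is modelled directly on the proof of lemma \ref{lem:embedding}: after blackboard-framing and fixing a neighbourhood of the consolidated vertex, each ribbon loop is moved (via the Whitney--Graustein theorem, lifted to $\Rb^3$ with constant $z$) onto the corresponding loop of the standard diagram of figure \ref{fig:dual-graph} up to some curls; since RI is unavailable, the curls are only cancelled in pairs (using the ribbon condition and $\varphi^2=\id$), leaving zero or one curl per loop, and this residual datum is precisely the Whitney degree mod $2$, i.e.\ the induced spin structure. Thus the paper never needs the injectivity half of the correspondence between regular homotopy classes and spin structures inside the proof: it re-derives the relevant fact by exhibiting a normal-form diagram for each spin structure, and these normal forms are exactly what is fed into the computation of theorem \ref{theo:main}. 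You instead treat the evaluation $|\gamma|$ as a regular-homotopy invariant (outsourcing the completeness of the move set of definition \ref{def:spin-model} to the cited references, which is the same level of rigour the paper adopts for its own diagram manipulations) and then quote the bijection between regular homotopy classes and spin structures; note that the direction you need, ``same induced spin structure implies regular homotopic,'' is not literally Pinkall's statement but follows from the counting argument given in the paragraph preceding the lemma (well-definedness plus surjectivity plus equal cardinality $2^{2g}$), so it is legitimately available. The trade-off is that your proof is shorter and more conceptual, while the paper's constructive reduction additionally produces the standard curled diagrams on which the $\eta$/$\chi$ computation of the partition function rests; with your route those normal forms would still have to be supplied separately (e.g.\ from the explicit immersions realising each spin structure) before proving theorem \ref{theo:main}.
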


\begin{proof}
The proof is similar to the proof of lemma  \ref{lem:embedding}, except that the curls can only be cancelled in pairs. Each curve in the graph can be moved to coincide with the curve from $\phi_0$ except that each curve contains a number of curls. These curls can be cancelled pairwise so that each curve has either one or zero curls; this is the data in the induced spin structure.\end{proof}

For example, one diagram for each of the four equivalence classes for the torus are shown in figure \ref{fig:torus-spin-diagrams}. The corresponding spin structures have $(q(c_1),q(c_2))=(0,0),(1,0),(0,1),(1,1)$ for the two embedded cycles $c_1$, $c_2$ forming a basis of $H_1(\Sigma_1,\Zb_2)$.

Lemmas \ref{lem:spin-triangulation} and \ref{spin-embedding} imply the partition function is an invariant of a surface with spin structure. Let $f\colon\Sigma'\to\Sigma$ be a diffeomorphism and $\phi\colon\Sigma\to\Rb^3$ an immersion inducing a spin structure $s$. Then the immersion $\phi\circ f$ induces the spin structure $f^*s$ on $\Sigma'$. Note that the invariance of the partition function can also be checked directly, without using the Pachner moves, by examining the effect of Dehn twists \cite{lickorish,lickorish-erratum} on the surface.

\begin{figure}[t!]
		\centering
		\includegraphics[width=\textwidth]{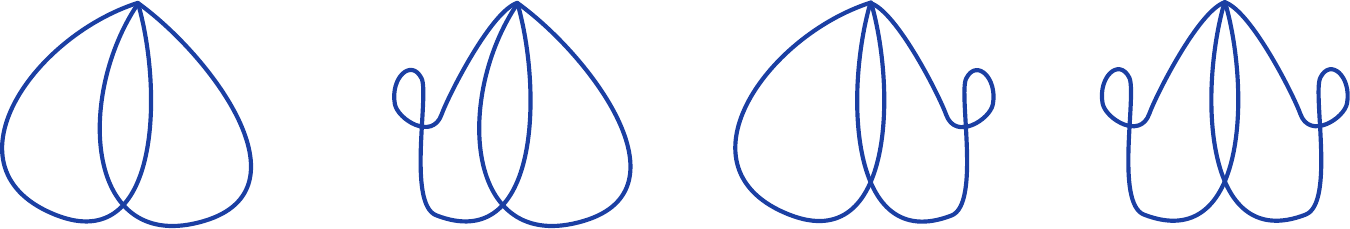}
		\caption{\emph{Torus spin models.} Dual graph diagrams for four immersions of the torus that are inequivalent under regular homotopy. The labels $c_1$ and $c_2$ are in correspondence with the left and right cycles in each diagram, respectively. The first three diagrams to the left are topologically-equivalent.}
                \label{fig:torus-spin-diagrams}
\end{figure} 

To calculate examples of spin models, an explicit formula is needed for the partition function that is manifestly an invariant. To establish this non-trivial result (theorem \ref{theo:main}), the algebraic consequences of the axioms for the spin models are studied. 

A straightforward first consequence is that $\varphi$ as defined in \ref{def:spin-model} is also an isomorphism of the algebra $A$ determined by the data $(C,B,R)$, which is to say, $\varphi(a\cdot b)=\varphi(a)\cdot\varphi(b)$ for all $a,b \in A$. The diagrammatic proof of this identity can be found below\footnote{The existence of the isomorphism $\varphi$ raises the question of uniqueness in the model. One could ask if compositions of $\varphi$ and $B$ or $\varphi$ and $C$ would give rise to alternative and valid spin state sum model data. It is, however, a simple exercise to verify that the original data is the only one that manifestly satisfies all the necessary axioms.}.
 $$		
\begingroup%
  \makeatletter%
  \providecommand\color[2][]{%
    \errmessage{(Inkscape) Color is used for the text in Inkscape, but the package 'color.sty' is not loaded}%
    \renewcommand\color[2][]{}%
  }%
  \providecommand\transparent[1]{%
    \errmessage{(Inkscape) Transparency is used (non-zero) for the text in Inkscape, but the package 'transparent.sty' is not loaded}%
    \renewcommand\transparent[1]{}%
  }%
  \providecommand\rotatebox[2]{#2}%
  \ifx\svgwidth\undefined%
    \setlength{\unitlength}{266.59709473bp}%
    \ifx\svgscale\undefined%
      \relax%
    \else%
      \setlength{\unitlength}{\unitlength * \real{\svgscale}}%
    \fi%
  \else%
    \setlength{\unitlength}{\svgwidth}%
  \fi%
  \global\let\svgwidth\undefined%
  \global\let\svgscale\undefined%
  \makeatother%
  \begin{picture}(1,0.14397813)%
    \put(0,0){\includegraphics[width=\unitlength]{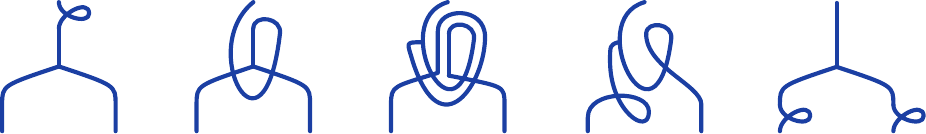}}%
    \put(0.14898576,0.0344589){\color[rgb]{0,0,0}\makebox(0,0)[lb]{\smash{$=$}}}%
    \put(0.78036525,0.03400505){\color[rgb]{0,0,0}\makebox(0,0)[lb]{\smash{$=$}}}%
    \put(0.35904061,0.0344589){\color[rgb]{0,0,0}\makebox(0,0)[lb]{\smash{$=$}}}%
    \put(0.57164399,0.0344589){\color[rgb]{0,0,0}\makebox(0,0)[lb]{\smash{$=$}}}%
  \end{picture}%
\endgroup%

\label{fig:phi_homomorphism}
$$ 
The next objective is to build the diagrammatic counterpart of expression \eqref{eq:invariant}, assigning $Z(\Sigma_g,s)$ to an orientable surface with spin structure. It is necessary to understand the analogue of the element $z=e_a\cdot e_b\cdot e_c \cdot e_d\hspace{1mm}\,B^{ac}\,B^{bd}$, introduced in equation~\eqref{eq:torus}, in the spin model. The difference is the possible introduction of curls in the diagrams.

A useful preliminary is the study of all the possible diagrams one can associate with the cylinder topology. These maps $A \to A$ are depicted below and denoted $p$, $n_1$ and $n_2$ respectively. 

$$
\centering
\begingroup%
  \makeatletter%
  \providecommand\color[2][]{%
    \errmessage{(Inkscape) Color is used for the text in Inkscape, but the package 'color.sty' is not loaded}%
    \renewcommand\color[2][]{}%
  }%
  \providecommand\transparent[1]{%
    \errmessage{(Inkscape) Transparency is used (non-zero) for the text in Inkscape, but the package 'transparent.sty' is not loaded}%
    \renewcommand\transparent[1]{}%
  }%
  \providecommand\rotatebox[2]{#2}%
  \ifx\svgwidth\undefined%
    \setlength{\unitlength}{302.92426758bp}%
    \ifx\svgscale\undefined%
      \relax%
    \else%
      \setlength{\unitlength}{\unitlength * \real{\svgscale}}%
    \fi%
  \else%
    \setlength{\unitlength}{\svgwidth}%
  \fi%
  \global\let\svgwidth\undefined%
  \global\let\svgscale\undefined%
  \makeatother%
  \begin{picture}(1,0.23684071)%
    \put(0,0){\includegraphics[width=\unitlength]{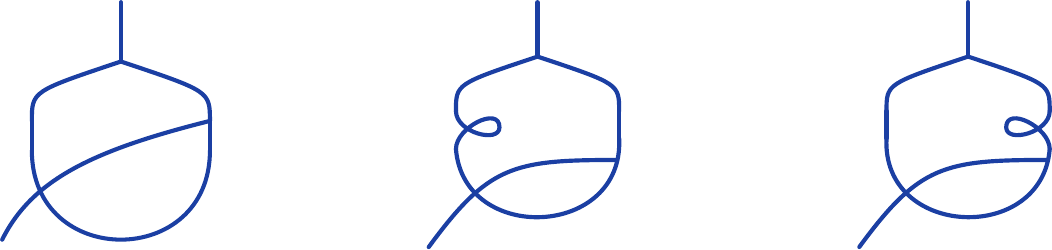}}%
  \end{picture}%
\endgroup%

$$ 
 Define two subspaces of $A$: $\mathcal{Z}_{\lambda}(A)$, the set of all elements $a \in A$ satisfying $m (b \otimes a)= m \circ \lambda (b \otimes a)$ for all $b \in A$, and analogously $\overline{\mathcal{Z}}_{\lambda}(A)$, the set of all elements $a \in A$ satisfying $m (b \otimes a)= m \circ \lambda (\varphi(b) \otimes a)$, for all $b \in A$.
\begin{lemma}
The map $R.p$ is a projector $A \to A$ with image $\mathcal{Z}_{\lambda}(A)$. Further, $p \circ \varphi = p$ and $n_2=n_1 \circ \varphi= \varphi \circ  n_1$. The map $R.n_1$ is a projector $A \to A$ with image $\overline{\mathcal{Z}}_{\lambda}(A)$.
\end{lemma}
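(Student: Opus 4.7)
The plan is to express each of the three cylinder maps $p$, $n_1$, $n_2$ as an explicit linear map built from the components $C$, $B$ and $\lambda$, and then to verify each claim algebraically using the axioms of a planar state sum together with the spin axioms 1--5 of definition \ref{def:spin-model}. Concretely, $p(a)$ can be written as $B^{ij}\, m\!\left(e_i \otimes m\circ\lambda(a \otimes e_j)\right)$, which corresponds to the strand $a$ passing through the cylinder while a loop formed from $B$ encircles it via one crossing; the maps $n_1$ and $n_2$ are the same diagram with an additional curl inserted on the loop, on one side or the other.

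The identity $n_2 = n_1\circ\varphi = \varphi\circ n_1$ is the fastest: it reduces to sliding a single curl from one location of the diagram to another, which is permitted by the compatibility of $\lambda$ with $m$ and $B$ (axioms 1--2) together with the ribbon condition (axiom 5). Likewise, for $p\circ\varphi = p$ I would pull the curl introduced by $\varphi$ around the encircling loop using the Reidemeister moves (axioms 3--4), whereupon the ribbon condition cancels it. Both of these are short diagrammatic calculations in the style already used in lemma \ref{lem:graded}.

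The main content is showing that $R\cdot p$ is a projector with image exactly $\mathcal{Z}_{\lambda}(A)$. First I would verify that $p(a)\in\mathcal{Z}_{\lambda}(A)$ for every $a\in A$: given a test element $b$, the identity $b\cdot p(a) = m\circ\lambda(b\otimes p(a))$ is established by pulling the strand $b$ across the encircling loop, where axioms 2--4 allow one to exchange an over-passing with multiplication, and axiom 1 accounts for the $B$ at the top of the loop. Conversely, if $a\in\mathcal{Z}_{\lambda}(A)$ then inside the expression for $p(a)$ the piece $m\circ\lambda(a\otimes e_j)$ can be replaced by $m(a\otimes e_j) = a\cdot e_j$, after which the loop collapses to the distinguished element $\beta$; the identity $R\beta = 1$ from theorem \ref{theo:diagram} then gives $Rp(a) = a$. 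These two facts together show that $Rp$ is idempotent and that its image is precisely $\mathcal{Z}_{\lambda}(A)$.

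The statement for $Rn_1$ is proved by the identical argument, but with a curl carried along at every step; this single curl is exactly what replaces the compatibility $b\cdot a = m\circ\lambda(b\otimes a)$ by the twisted compatibility $b\cdot a = m\circ\lambda(\varphi(b)\otimes a)$ in the characterisation of the image, giving $\overline{\mathcal{Z}}_{\lambda}(A)$. The main obstacle I anticipate is the image computation in the third paragraph: it requires using all of axioms 1--4 simultaneously while keeping track of the ordering of strands and crossings, and a careful bookkeeping is needed to see that both inclusions combine to an idempotent on the nose rather than up to a scalar. This is where the exact normalisation $R\beta = 1$, as opposed to mere proportionality, is indispensable.
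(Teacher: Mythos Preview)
Your proposal is correct and follows essentially the same route as the paper: both arguments establish that $p(A)\subseteq\mathcal{Z}_\lambda(A)$ by pulling a test strand through the encircling loop, then show $Rp$ fixes $\mathcal{Z}_\lambda(A)$ pointwise by collapsing the loop to $\beta$ and invoking $R\beta=1$, and handle $p\circ\varphi=p$ and $n_2=n_1\circ\varphi=\varphi\circ n_1$ by sliding curls with the crossing axioms (the paper phrases the latter as using that $\varphi$ is an algebra automorphism, which is the same content). One small caution: in your explicit formula $p(a)=B^{ij}\,m\!\bigl(e_i\otimes m\circ\lambda(a\otimes e_j)\bigr)$ the slot in which $a$ sits relative to the crossing must match the definition of $\mathcal{Z}_\lambda(A)$, so a preliminary manipulation (as the paper also performs diagrammatically) may be needed before the direct replacement $m\circ\lambda\to m$ applies.
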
 
\begin{proof}
First, one must note that for all $a \in A$, $p(a) \in \mathcal{Z}_{\lambda}(A)$. 
$$
\centering
\begingroup%
  \makeatletter%
  \providecommand\color[2][]{%
    \errmessage{(Inkscape) Color is used for the text in Inkscape, but the package 'color.sty' is not loaded}%
    \renewcommand\color[2][]{}%
  }%
  \providecommand\transparent[1]{%
    \errmessage{(Inkscape) Transparency is used (non-zero) for the text in Inkscape, but the package 'transparent.sty' is not loaded}%
    \renewcommand\transparent[1]{}%
  }%
  \providecommand\rotatebox[2]{#2}%
  \ifx\svgwidth\undefined%
    \setlength{\unitlength}{291.42006836bp}%
    \ifx\svgscale\undefined%
      \relax%
    \else%
      \setlength{\unitlength}{\unitlength * \real{\svgscale}}%
    \fi%
  \else%
    \setlength{\unitlength}{\svgwidth}%
  \fi%
  \global\let\svgwidth\undefined%
  \global\let\svgscale\undefined%
  \makeatother%
  \begin{picture}(1,0.42491371)%
    \put(0,0){\includegraphics[width=\unitlength]{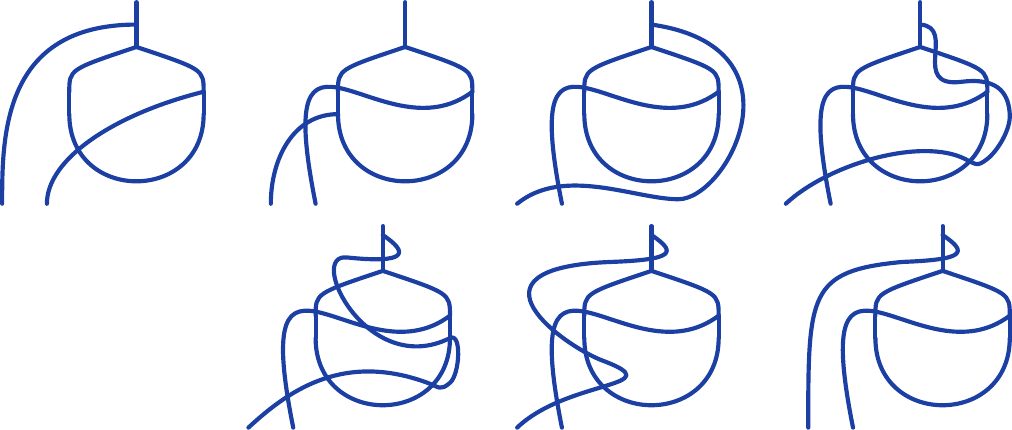}}%
    \put(0.22355523,0.2929243){\color[rgb]{0,0,0}\makebox(0,0)[lb]{\smash{$=$}}}%
    \put(0.48434715,0.2929243){\color[rgb]{0,0,0}\makebox(0,0)[lb]{\smash{$=$}}}%
    \put(0.22355523,0.08703594){\color[rgb]{0,0,0}\makebox(0,0)[lb]{\smash{$=$}}}%
    \put(0.48434715,0.08703594){\color[rgb]{0,0,0}\makebox(0,0)[lb]{\smash{$=$}}}%
    \put(0.74513907,0.2929243){\color[rgb]{0,0,0}\makebox(0,0)[lb]{\smash{$=$}}}%
    \put(0.74513907,0.07331005){\color[rgb]{0,0,0}\makebox(0,0)[lb]{\smash{$=$}}}%
  \end{picture}%
\endgroup%

$$

\noindent One can then further conclude that if $a \in \mathcal{Z}_{\lambda}(A)$ then $R.p(a)=a$. 
$$       
\centering       
\begingroup%
  \makeatletter%
  \providecommand\color[2][]{%
    \errmessage{(Inkscape) Color is used for the text in Inkscape, but the package 'color.sty' is not loaded}%
    \renewcommand\color[2][]{}%
  }%
  \providecommand\transparent[1]{%
    \errmessage{(Inkscape) Transparency is used (non-zero) for the text in Inkscape, but the package 'transparent.sty' is not loaded}%
    \renewcommand\transparent[1]{}%
  }%
  \providecommand\rotatebox[2]{#2}%
  \ifx\svgwidth\undefined%
    \setlength{\unitlength}{304.98046875bp}%
    \ifx\svgscale\undefined%
      \relax%
    \else%
      \setlength{\unitlength}{\unitlength * \real{\svgscale}}%
    \fi%
  \else%
    \setlength{\unitlength}{\svgwidth}%
  \fi%
  \global\let\svgwidth\undefined%
  \global\let\svgscale\undefined%
  \makeatother%
  \begin{picture}(1,0.20729045)%
    \put(0,0){\includegraphics[width=\unitlength]{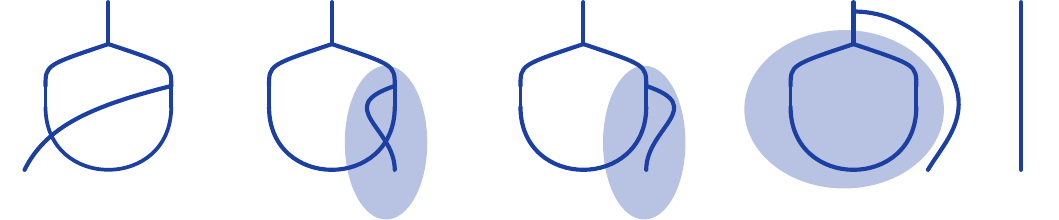}}%
    \put(-0.000935,0.10922393){\color[rgb]{0,0,0}\makebox(0,0)[lb]{\smash{$R$}}}%
    \put(0.18003358,0.10922393){\color[rgb]{0,0,0}\makebox(0,0)[lb]{\smash{$=$}}}%
    \put(0.21421402,0.10922393){\color[rgb]{0,0,0}\makebox(0,0)[lb]{\smash{$R$}}}%
    \put(0.41346454,0.10922393){\color[rgb]{0,0,0}\makebox(0,0)[lb]{\smash{$=$}}}%
    \put(0.4527504,0.10922393){\color[rgb]{0,0,0}\makebox(0,0)[lb]{\smash{$R$}}}%
    \put(0.6548447,0.10922393){\color[rgb]{0,0,0}\makebox(0,0)[lb]{\smash{$=$}}}%
    \put(0.71048607,0.10922393){\color[rgb]{0,0,0}\makebox(0,0)[lb]{\smash{$R$}}}%
    \put(0.91715658,0.10922393){\color[rgb]{0,0,0}\makebox(0,0)[lb]{\smash{$=$}}}%
    \put(0.01573871,0.01049258){\color[rgb]{0,0,0}\makebox(0,0)[lb]{\smash{$a$}}}%
    \put(0.36461351,0.01049258){\color[rgb]{0,0,0}\makebox(0,0)[lb]{\smash{$a$}}}%
    \put(0.60331732,0.01049258){\color[rgb]{0,0,0}\makebox(0,0)[lb]{\smash{$a$}}}%
    \put(0.86825232,0.01049258){\color[rgb]{0,0,0}\makebox(0,0)[lb]{\smash{$a$}}}%
    \put(0.95743836,0.01049258){\color[rgb]{0,0,0}\makebox(0,0)[lb]{\smash{$a$}}}%
  \end{picture}%
\endgroup%

$$
\noindent This is enough to establish $R.p$ as a projector onto $\mathcal{Z}_{\lambda}(A)$. The proof $p \circ \varphi = p$ is accomplished by direct composition. 
$$
\begingroup%
  \makeatletter%
  \providecommand\color[2][]{%
    \errmessage{(Inkscape) Color is used for the text in Inkscape, but the package 'color.sty' is not loaded}%
    \renewcommand\color[2][]{}%
  }%
  \providecommand\transparent[1]{%
    \errmessage{(Inkscape) Transparency is used (non-zero) for the text in Inkscape, but the package 'transparent.sty' is not loaded}%
    \renewcommand\transparent[1]{}%
  }%
  \providecommand\rotatebox[2]{#2}%
  \ifx\svgwidth\undefined%
    \setlength{\unitlength}{248.69101563bp}%
    \ifx\svgscale\undefined%
      \relax%
    \else%
      \setlength{\unitlength}{\unitlength * \real{\svgscale}}%
    \fi%
  \else%
    \setlength{\unitlength}{\svgwidth}%
  \fi%
  \global\let\svgwidth\undefined%
  \global\let\svgscale\undefined%
  \makeatother%
  \begin{picture}(1,0.35460913)%
    \put(0,0){\includegraphics[width=\unitlength]{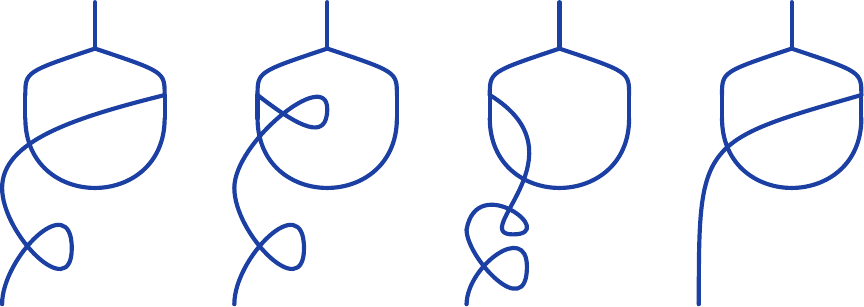}}%
    \put(0.22194136,0.21230292){\color[rgb]{0,0,0}\makebox(0,0)[lb]{\smash{$=$}}}%
    \put(0.49135198,0.21172843){\color[rgb]{0,0,0}\makebox(0,0)[lb]{\smash{$=$}}}%
    \put(0.75961371,0.21172843){\color[rgb]{0,0,0}\makebox(0,0)[lb]{\smash{$=$}}}%
  \end{picture}%
\endgroup%
	
$$
\noindent To show the identities $\varphi \circ n_1=n_2 = n_1 \circ \varphi$ hold one uses the fact $\varphi$ is an algebra automorphism.
$$
\begingroup%
  \makeatletter%
  \providecommand\color[2][]{%
    \errmessage{(Inkscape) Color is used for the text in Inkscape, but the package 'color.sty' is not loaded}%
    \renewcommand\color[2][]{}%
  }%
  \providecommand\transparent[1]{%
    \errmessage{(Inkscape) Transparency is used (non-zero) for the text in Inkscape, but the package 'transparent.sty' is not loaded}%
    \renewcommand\transparent[1]{}%
  }%
  \providecommand\rotatebox[2]{#2}%
  \ifx\svgwidth\undefined%
    \setlength{\unitlength}{297.08718262bp}%
    \ifx\svgscale\undefined%
      \relax%
    \else%
      \setlength{\unitlength}{\unitlength * \real{\svgscale}}%
    \fi%
  \else%
    \setlength{\unitlength}{\svgwidth}%
  \fi%
  \global\let\svgwidth\undefined%
  \global\let\svgscale\undefined%
  \makeatother%
  \begin{picture}(1,0.22344803)%
    \put(0,0){\includegraphics[width=\unitlength]{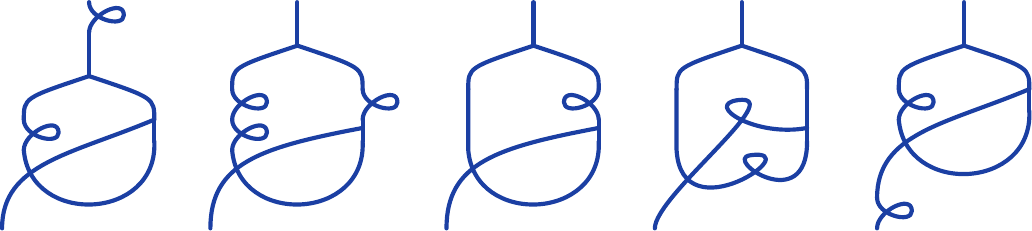}}%
    \put(0.16142672,0.07902445){\color[rgb]{0,0,0}\makebox(0,0)[lb]{\smash{$=$}}}%
    \put(0.39901958,0.07902445){\color[rgb]{0,0,0}\makebox(0,0)[lb]{\smash{$=$}}}%
    \put(0.59826032,0.07902445){\color[rgb]{0,0,0}\makebox(0,0)[lb]{\smash{$=$}}}%
    \put(0.79818116,0.07902445){\color[rgb]{0,0,0}\makebox(0,0)[lb]{\smash{$=$}}}%
  \end{picture}%
\endgroup%

$$
\noindent It is now shown that for all $a \in A$ the element $n_1(a)$ belongs to $\overline{\mathcal{Z}}_{\lambda}(A)$. 
$$
\begingroup%
  \makeatletter%
  \providecommand\color[2][]{%
    \errmessage{(Inkscape) Color is used for the text in Inkscape, but the package 'color.sty' is not loaded}%
    \renewcommand\color[2][]{}%
  }%
  \providecommand\transparent[1]{%
    \errmessage{(Inkscape) Transparency is used (non-zero) for the text in Inkscape, but the package 'transparent.sty' is not loaded}%
    \renewcommand\transparent[1]{}%
  }%
  \providecommand\rotatebox[2]{#2}%
  \ifx\svgwidth\undefined%
    \setlength{\unitlength}{291.4595459bp}%
    \ifx\svgscale\undefined%
      \relax%
    \else%
      \setlength{\unitlength}{\unitlength * \real{\svgscale}}%
    \fi%
  \else%
    \setlength{\unitlength}{\svgwidth}%
  \fi%
  \global\let\svgwidth\undefined%
  \global\let\svgscale\undefined%
  \makeatother%
  \begin{picture}(1,0.42692087)%
    \put(0,0){\includegraphics[width=\unitlength]{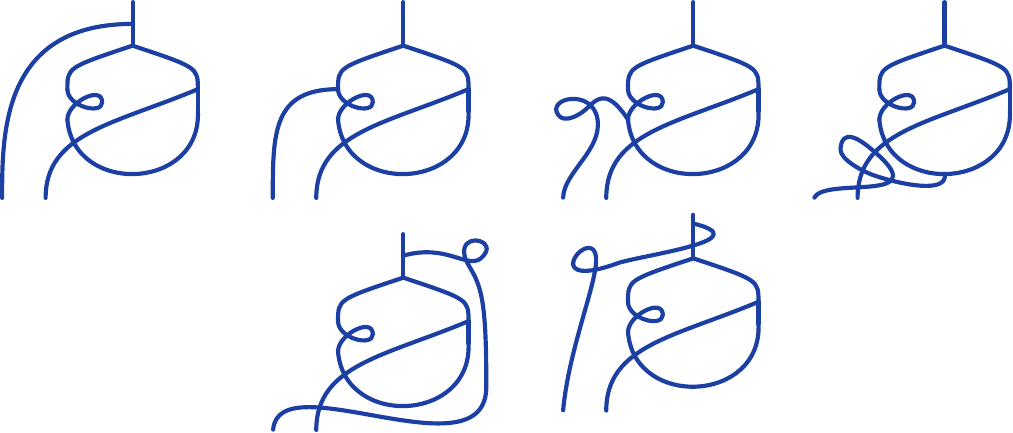}}%
    \put(0.21965378,0.31149388){\color[rgb]{0,0,0}\makebox(0,0)[lb]{\smash{$=$}}}%
    \put(0.49413441,0.31149388){\color[rgb]{0,0,0}\makebox(0,0)[lb]{\smash{$=$}}}%
    \put(0.78233908,0.31149388){\color[rgb]{0,0,0}\makebox(0,0)[lb]{\smash{$=$}}}%
    \put(0.21965378,0.07818535){\color[rgb]{0,0,0}\makebox(0,0)[lb]{\smash{$=$}}}%
    \put(0.49413441,0.07818535){\color[rgb]{0,0,0}\makebox(0,0)[lb]{\smash{$=$}}}%
  \end{picture}%
\endgroup%

\label{fig:nproof}	
$$
\noindent Finally it is established that if $a \in \overline{\mathcal{Z}}_{\lambda}(A)$ then $R.n_1(a)=a$. Then $R.n_1$ is a projector onto $\overline{\mathcal{Z}}_{\lambda}(A)$.
$$
\begingroup%
  \makeatletter%
  \providecommand\color[2][]{%
    \errmessage{(Inkscape) Color is used for the text in Inkscape, but the package 'color.sty' is not loaded}%
    \renewcommand\color[2][]{}%
  }%
  \providecommand\transparent[1]{%
    \errmessage{(Inkscape) Transparency is used (non-zero) for the text in Inkscape, but the package 'transparent.sty' is not loaded}%
    \renewcommand\transparent[1]{}%
  }%
  \providecommand\rotatebox[2]{#2}%
  \ifx\svgwidth\undefined%
    \setlength{\unitlength}{316.98049316bp}%
    \ifx\svgscale\undefined%
      \relax%
    \else%
      \setlength{\unitlength}{\unitlength * \real{\svgscale}}%
    \fi%
  \else%
    \setlength{\unitlength}{\svgwidth}%
  \fi%
  \global\let\svgwidth\undefined%
  \global\let\svgscale\undefined%
  \makeatother%
  \begin{picture}(1,0.23675949)%
    \put(0,0){\includegraphics[width=\unitlength]{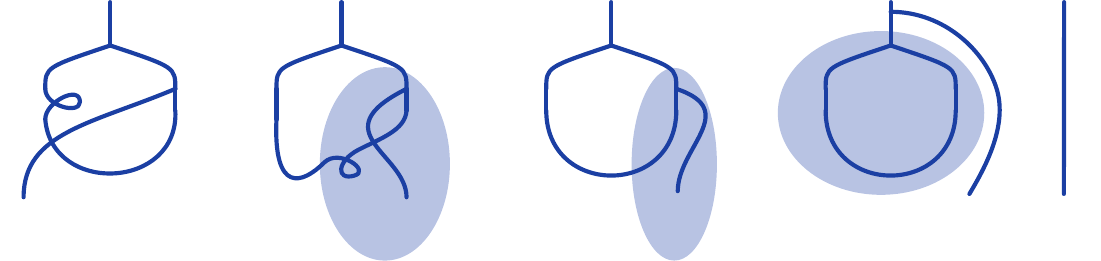}}%
    \put(-0.00089961,0.13664381){\color[rgb]{0,0,0}\makebox(0,0)[lb]{\smash{$R$}}}%
    \put(0.17270873,0.13728116){\color[rgb]{0,0,0}\makebox(0,0)[lb]{\smash{$=$}}}%
    \put(0.21553671,0.13728116){\color[rgb]{0,0,0}\makebox(0,0)[lb]{\smash{$R$}}}%
    \put(0.46039798,0.1379185){\color[rgb]{0,0,0}\makebox(0,0)[lb]{\smash{$R$}}}%
    \put(0.41502054,0.13728116){\color[rgb]{0,0,0}\makebox(0,0)[lb]{\smash{$=$}}}%
    \put(0.71405414,0.13728116){\color[rgb]{0,0,0}\makebox(0,0)[lb]{\smash{$R$}}}%
    \put(0.66357784,0.13728116){\color[rgb]{0,0,0}\makebox(0,0)[lb]{\smash{$=$}}}%
    \put(0.92857838,0.13728116){\color[rgb]{0,0,0}\makebox(0,0)[lb]{\smash{$=$}}}%
    \put(0.01514288,0.01766676){\color[rgb]{0,0,0}\makebox(0,0)[lb]{\smash{$a$}}}%
    \put(0.36090548,0.01766676){\color[rgb]{0,0,0}\makebox(0,0)[lb]{\smash{$a$}}}%
    \put(0.60823931,0.01766676){\color[rgb]{0,0,0}\makebox(0,0)[lb]{\smash{$a$}}}%
    \put(0.87323984,0.01766676){\color[rgb]{0,0,0}\makebox(0,0)[lb]{\smash{$a$}}}%
    \put(0.95904962,0.01766676){\color[rgb]{0,0,0}\makebox(0,0)[lb]{\smash{$a$}}}%
  \end{picture}%
\endgroup%

\label{fig:nproof2}	
$$
\end{proof}
\noindent The spin analogues of $z$ as defined in equation \eqref{eq:invariant} can be now found below. Denoted $\eta_1,\eta_2,\eta_3$ and $\chi$ they are preferred elements of the algebra -- the building blocks of the spin partition functions. 
\begin{equation*}
\hspace{3mm}
\begingroup%
  \makeatletter%
  \providecommand\color[2][]{%
    \errmessage{(Inkscape) Color is used for the text in Inkscape, but the package 'color.sty' is not loaded}%
    \renewcommand\color[2][]{}%
  }%
  \providecommand\transparent[1]{%
    \errmessage{(Inkscape) Transparency is used (non-zero) for the text in Inkscape, but the package 'transparent.sty' is not loaded}%
    \renewcommand\transparent[1]{}%
  }%
  \providecommand\rotatebox[2]{#2}%
  \ifx\svgwidth\undefined%
    \setlength{\unitlength}{388.38898926bp}%
    \ifx\svgscale\undefined%
      \relax%
    \else%
      \setlength{\unitlength}{\unitlength * \real{\svgscale}}%
    \fi%
  \else%
    \setlength{\unitlength}{\svgwidth}%
  \fi%
  \global\let\svgwidth\undefined%
  \global\let\svgscale\undefined%
  \makeatother%
  \begin{picture}(1,0.17052895)%
    \put(0,0){\includegraphics[width=\unitlength]{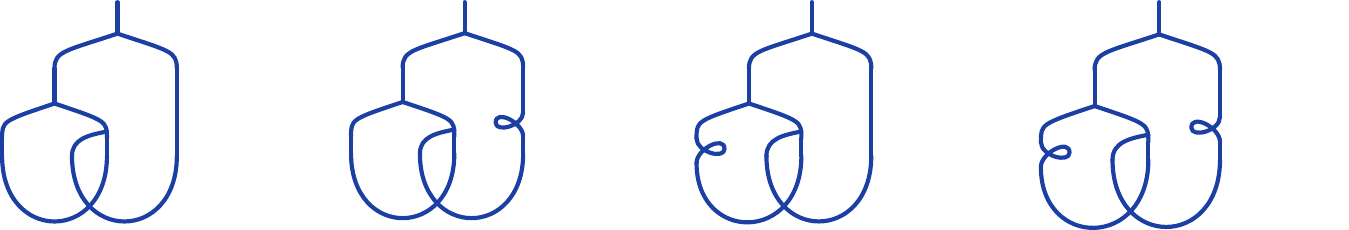}}%
    \put(0.14266247,0.06464588){\color[rgb]{0,0,0}\makebox(0,0)[lb]{\smash{$=$}}}%
    \put(0.17355933,0.06464588){\color[rgb]{0,0,0}\makebox(0,0)[lb]{\smash{$\eta_1$}}}%
    \put(0.4001363,0.06464588){\color[rgb]{0,0,0}\makebox(0,0)[lb]{\smash{$=$ }}}%
    \put(0.65761013,0.06464588){\color[rgb]{0,0,0}\makebox(0,0)[lb]{\smash{$=$}}}%
    \put(0.94598081,0.06464588){\color[rgb]{0,0,0}\makebox(0,0)[lb]{\smash{$\chi$}}}%
    \put(0.91508395,0.06464588){\color[rgb]{0,0,0}\makebox(0,0)[lb]{\smash{$=$}}}%
    \put(0.68850698,0.06464588){\color[rgb]{0,0,0}\makebox(0,0)[lb]{\smash{$\eta_3$}}}%
    \put(0.43103316,0.06464588){\color[rgb]{0,0,0}\makebox(0,0)[lb]{\smash{$\eta_2$}}}%
  \end{picture}%
\endgroup%

\end{equation*}

It is easy to verify the identity $\eta_1=\eta_2=\eta_3$ holds; the notation $\eta$ is used for any of these maps. To see how the result holds note that one of the relations, $\eta_1=\eta_2$, is trivial -- it follows from $p \circ \varphi=p$. The proof for the remaining equation, $\eta_3=\eta_1$ is depicted below. 
$$
\centering
\begingroup%
  \makeatletter%
  \providecommand\color[2][]{%
    \errmessage{(Inkscape) Color is used for the text in Inkscape, but the package 'color.sty' is not loaded}%
    \renewcommand\color[2][]{}%
  }%
  \providecommand\transparent[1]{%
    \errmessage{(Inkscape) Transparency is used (non-zero) for the text in Inkscape, but the package 'transparent.sty' is not loaded}%
    \renewcommand\transparent[1]{}%
  }%
  \providecommand\rotatebox[2]{#2}%
  \ifx\svgwidth\undefined%
    \setlength{\unitlength}{350.64528809bp}%
    \ifx\svgscale\undefined%
      \relax%
    \else%
      \setlength{\unitlength}{\unitlength * \real{\svgscale}}%
    \fi%
  \else%
    \setlength{\unitlength}{\svgwidth}%
  \fi%
  \global\let\svgwidth\undefined%
  \global\let\svgscale\undefined%
  \makeatother%
  \begin{picture}(1,0.20523171)%
    \put(0,0){\includegraphics[width=\unitlength]{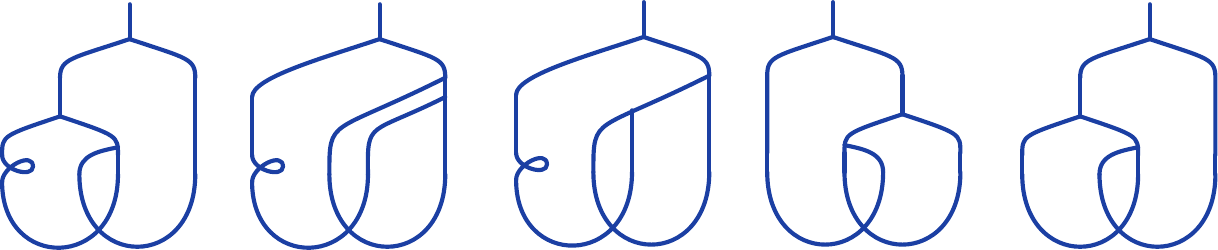}}%
    \put(0.17084275,0.06536852){\color[rgb]{0,0,0}\makebox(0,0)[lb]{\smash{$=$}}}%
    \put(0.38021163,0.06479236){\color[rgb]{0,0,0}\makebox(0,0)[lb]{\smash{$=$}}}%
    \put(0.59119318,0.06421621){\color[rgb]{0,0,0}\makebox(0,0)[lb]{\smash{$=$}}}%
    \put(0.80229059,0.06479236){\color[rgb]{0,0,0}\makebox(0,0)[lb]{\smash{$=$}}}%
  \end{picture}%
\endgroup%

$$
The two non-equivalent generalisations of $z$ have the following properties. 
\begin{lemma} \label{lem:properties_eta_chi}
The elements $\eta$ and $\chi$ are central and satisfy $\eta^2=\chi^2$.
\end{lemma}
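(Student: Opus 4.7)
The plan is to prove the three claims by diagrammatic manipulation, using the axioms of Definition~\ref{def:spin-model} together with the cylinder-projector identities just established.

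Centrality of $\eta$: by construction $\eta$ lies in the image of $Rp$, hence in $\mathcal{Z}_{\lambda}(A)$, so the twisted commutation relation $b\cdot\eta = m\circ\lambda(b\otimes\eta)$ holds for every $b\in A$. To promote this to honest centrality $b\cdot\eta=\eta\cdot b$, I plan to attach a $b$-strand to the defining diagram of $\eta$ and slide it around the handle, exploiting the alternative representations $\eta=\eta_1=\eta_2=\eta_3$ proved above. Each equality permits a different crossing to be resolved via the compatibility axioms~\ref{fig:axiom_mult} and~\ref{fig:axiom_Reid_3}, while the identities $p\circ\varphi=p$ and $\varphi^2=\id$ (the latter being the Whitney-trick consequence of axioms~\ref{fig:axiom_form}, \ref{fig:axiom_square}, \ref{fig:axiom_Reid_3}) straighten out the curls created along the way, so that the $b$-strand re-emerges on the opposite side of the $\eta$-diagram.

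Centrality of $\chi$: run the same argument, with $\mathcal{Z}_{\lambda}(A)$ replaced by $\overline{\mathcal{Z}}_{\lambda}(A)$ and $Rp$ by $Rn_{1}$, so that $\chi$ sits in the image of the second projector. The ribbon condition (axiom~\ref{fig:axiom_left_right}) supplies the additional move needed to push the $b$-strand past the curl intrinsic to the definition of $\chi$, and the identity $n_{2}=n_{1}\circ\varphi=\varphi\circ n_{1}$ restores the original form of the diagram after the slide.

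Identity $\eta^{2}=\chi^{2}$: the diagram for $\chi^{2}$ differs from that for $\eta^{2}$ by one curl inserted into each of the two torus handles, placed in series. Using centrality (just proved) the two curls can be concentrated onto a single strand; by $\varphi^{2}=\id$, a pair of same-oriented curls on one strand cancels, reducing the picture to the diagram for $\eta^{2}$.

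The principal obstacle will be the centrality step: the projector $Rp$ only gives the $\lambda$-twisted commutation relation, so upgrading it to an honest commutator requires orchestrating all three diagrammatic forms of $\eta$ together with the RII, RIII and ribbon moves simultaneously, while carefully tracking the curls created and destroyed. Once centrality is in hand, the identity $\eta^{2}=\chi^{2}$ follows by a relatively clean curl-cancellation argument, which geometrically corresponds to the fact that both sides describe an even-parity spin structure on a genus-$2$ surface.
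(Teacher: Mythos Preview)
Your plan for centrality is essentially the paper's: show $\eta\in\mathcal{Z}_{\lambda}(A)$ (you get this for free since $\eta=\eta_{1}$ is visibly in the image of $p$), then slide an external strand across the handle diagram. The paper does the second step slightly more simply than you propose, observing only that $\eta$ is ``closed from below'' (has no downward legs), so a strand can be dragged over it using the compatibility axioms and RII/RIII; your detour through all three forms $\eta_{1}=\eta_{2}=\eta_{3}$ is not needed, though it would work.

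The gap is in your argument for $\eta^{2}=\chi^{2}$. The claim that centrality lets you ``concentrate the two curls onto a single strand'' is not a legitimate move. Centrality is the statement $a\chi=\chi a$ about the element $\chi\in A$; diagrammatically it lets you slide an \emph{external} strand past the $\chi$-block, but it says nothing about relocating a curl that sits on an internal loop of the handle. The moves actually at your disposal---RII, RIII, the ribbon condition, compatibility with $B$ and $C$, and $\varphi^{2}=\id$---let a curl slide along its own strand and let a curl on the output of a multiplication split into curls on \emph{both} inputs (since $\varphi$ is an algebra homomorphism), but none of them transports a single curl from one loop to a different loop. So the pair-cancellation you envision cannot be set up.

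Your geometric remark at the end (``both sides describe an even-parity spin structure on a genus-$2$ surface'') is the right intuition but is logically downstream of this lemma: Lemma~\ref{spin-embedding} is stated only for closed surfaces, whereas $\eta^{2}$ and $\chi^{2}$ are elements of $A$ (diagrams with an open leg), so you cannot invoke spin-structure invariance to equate them. The paper proves $\eta^{2}=\chi^{2}$ by a long direct chain of diagram manipulations---roughly eighteen steps, systematically threading one loop of the first $\chi$ through the second handle using RII/RIII and the compatibility axioms, creating and absorbing curls in pairs along the way. There does not appear to be a shortcut of the kind you sketch; you should expect to carry out a comparable explicit calculation.
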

\begin{proof}
\noindent One is able to easily conclude $\eta$ is an element of $\mathcal{Z}_{\lambda}(A)$. Given the increasing complexity of the diagrams requiring simplification, lines being transformed have been dashed. 

$$
\centering
\begingroup%
  \makeatletter%
  \providecommand\color[2][]{%
    \errmessage{(Inkscape) Color is used for the text in Inkscape, but the package 'color.sty' is not loaded}%
    \renewcommand\color[2][]{}%
  }%
  \providecommand\transparent[1]{%
    \errmessage{(Inkscape) Transparency is used (non-zero) for the text in Inkscape, but the package 'transparent.sty' is not loaded}%
    \renewcommand\transparent[1]{}%
  }%
  \providecommand\rotatebox[2]{#2}%
  \ifx\svgwidth\undefined%
    \setlength{\unitlength}{388.72768555bp}%
    \ifx\svgscale\undefined%
      \relax%
    \else%
      \setlength{\unitlength}{\unitlength * \real{\svgscale}}%
    \fi%
  \else%
    \setlength{\unitlength}{\svgwidth}%
  \fi%
  \global\let\svgwidth\undefined%
  \global\let\svgscale\undefined%
  \makeatother%
  \begin{picture}(1,0.22529832)%
    \put(0,0){\includegraphics[width=\unitlength]{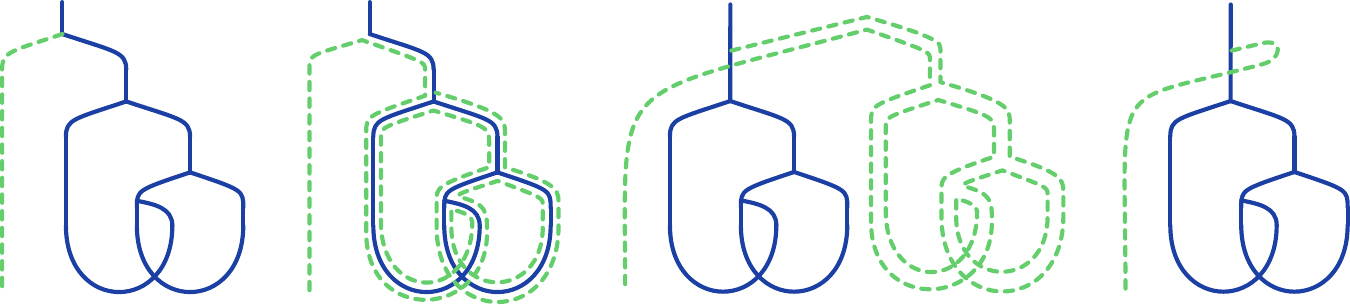}}%
    \put(0.19129257,0.06999821){\color[rgb]{0,0,0}\makebox(0,0)[lb]{\smash{$=$}}}%
    \put(0.42575708,0.06926327){\color[rgb]{0,0,0}\makebox(0,0)[lb]{\smash{$=$}}}%
    \put(0.79840137,0.06926327){\color[rgb]{0,0,0}\makebox(0,0)[lb]{\smash{$=$}}}%
  \end{picture}%
\endgroup%

$$
The first step uses multiplication associativity, and the multiplication and crossing compatibility a number of times. The second step uses axioms  \eqref{fig:axiom_mult} to \eqref{fig:axiom_Reid_3} and reflects the fact lines can be freely moved past each other as long as their boundaries remains fixed. The last step uses the condition $\varphi^2=\text{id}$ (note the number of times the $\varphi$ map appears is even). 

In addition, because $\eta$ is determined by a diagram closed from below (a diagram with no downward-pointing legs), $\eta a=a\eta$ for all $a \in A$.
$$
\begingroup%
  \makeatletter%
  \providecommand\color[2][]{%
    \errmessage{(Inkscape) Color is used for the text in Inkscape, but the package 'color.sty' is not loaded}%
    \renewcommand\color[2][]{}%
  }%
  \providecommand\transparent[1]{%
    \errmessage{(Inkscape) Transparency is used (non-zero) for the text in Inkscape, but the package 'transparent.sty' is not loaded}%
    \renewcommand\transparent[1]{}%
  }%
  \providecommand\rotatebox[2]{#2}%
  \ifx\svgwidth\undefined%
    \setlength{\unitlength}{265.2bp}%
    \ifx\svgscale\undefined%
      \relax%
    \else%
      \setlength{\unitlength}{\unitlength * \real{\svgscale}}%
    \fi%
  \else%
    \setlength{\unitlength}{\svgwidth}%
  \fi%
  \global\let\svgwidth\undefined%
  \global\let\svgscale\undefined%
  \makeatother%
  \begin{picture}(1,0.39661731)%
    \put(0,0){\includegraphics[width=\unitlength]{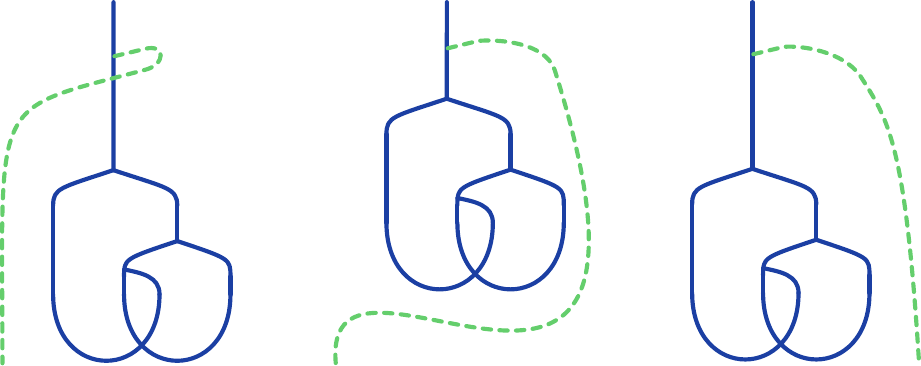}}%
    \put(0.30429864,0.10769326){\color[rgb]{0,0,0}\makebox(0,0)[lb]{\smash{$=$}}}%
    \put(0.68137255,0.10769326){\color[rgb]{0,0,0}\makebox(0,0)[lb]{\smash{$=$}}}%
  \end{picture}%
\endgroup%

$$
In other words, $\eta \in \mathcal{Z}(A)$. Establishing the same result for $\chi$ is entirely analogous. 

The last and most lengthy part of the proof comes from determining a non-trivial identity: $\chi^2=\eta^2$. To make the exposition more clear each line of the proof begins with the transformed-to-be diagram line dashed.
$$
\begingroup%
  \makeatletter%
  \providecommand\color[2][]{%
    \errmessage{(Inkscape) Color is used for the text in Inkscape, but the package 'color.sty' is not loaded}%
    \renewcommand\color[2][]{}%
  }%
  \providecommand\transparent[1]{%
    \errmessage{(Inkscape) Transparency is used (non-zero) for the text in Inkscape, but the package 'transparent.sty' is not loaded}%
    \renewcommand\transparent[1]{}%
  }%
  \providecommand\rotatebox[2]{#2}%
  \ifx\svgwidth\undefined%
    \setlength{\unitlength}{373.40422363bp}%
    \ifx\svgscale\undefined%
      \relax%
    \else%
      \setlength{\unitlength}{\unitlength * \real{\svgscale}}%
    \fi%
  \else%
    \setlength{\unitlength}{\svgwidth}%
  \fi%
  \global\let\svgwidth\undefined%
  \global\let\svgscale\undefined%
  \makeatother%
  \begin{picture}(1,0.19058238)%
    \put(0,0){\includegraphics[width=\unitlength]{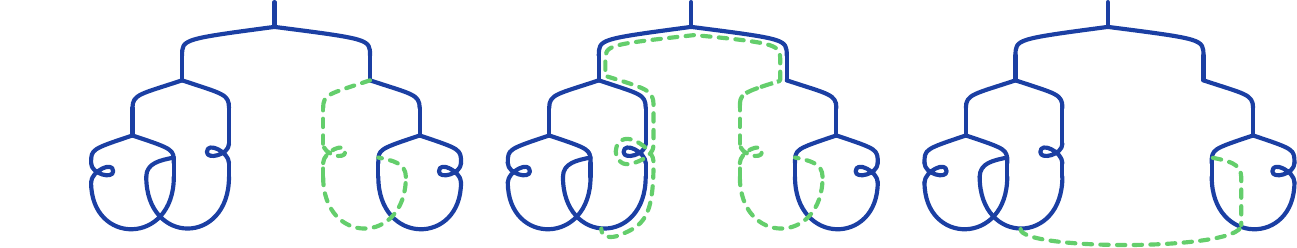}}%
    \put(-0.00076367,0.09492803){\color[rgb]{0,0,0}\makebox(0,0)[lb]{\smash{$\chi^2$}}}%
    \put(0.03137309,0.09492803){\color[rgb]{0,0,0}\makebox(0,0)[lb]{\smash{$=$}}}%
    \put(0.36345289,0.09492803){\color[rgb]{0,0,0}\makebox(0,0)[lb]{\smash{$=$}}}%
    \put(0.68482044,0.09492803){\color[rgb]{0,0,0}\makebox(0,0)[lb]{\smash{$=$}}}%
  \end{picture}%
\endgroup%
	
$$
$$
\hspace{5mm}
\begingroup%
  \makeatletter%
  \providecommand\color[2][]{%
    \errmessage{(Inkscape) Color is used for the text in Inkscape, but the package 'color.sty' is not loaded}%
    \renewcommand\color[2][]{}%
  }%
  \providecommand\transparent[1]{%
    \errmessage{(Inkscape) Transparency is used (non-zero) for the text in Inkscape, but the package 'transparent.sty' is not loaded}%
    \renewcommand\transparent[1]{}%
  }%
  \providecommand\rotatebox[2]{#2}%
  \ifx\svgwidth\undefined%
    \setlength{\unitlength}{365.97004395bp}%
    \ifx\svgscale\undefined%
      \relax%
    \else%
      \setlength{\unitlength}{\unitlength * \real{\svgscale}}%
    \fi%
  \else%
    \setlength{\unitlength}{\svgwidth}%
  \fi%
  \global\let\svgwidth\undefined%
  \global\let\svgscale\undefined%
  \makeatother%
  \begin{picture}(1,0.20815369)%
    \put(0,0){\includegraphics[width=\unitlength]{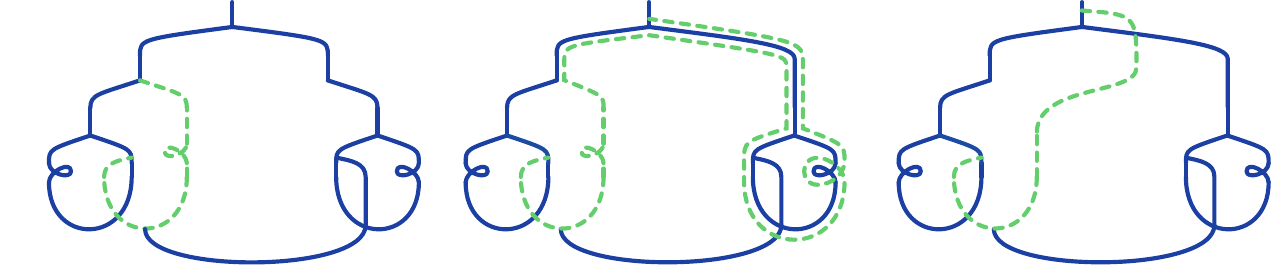}}%
    \put(-0.00077918,0.10494738){\color[rgb]{0,0,0}\makebox(0,0)[lb]{\smash{$=$}}}%
    \put(0.33804636,0.10494738){\color[rgb]{0,0,0}\makebox(0,0)[lb]{\smash{$=$}}}%
    \put(0.67687194,0.10494738){\color[rgb]{0,0,0}\makebox(0,0)[lb]{\smash{$=$}}}%
  \end{picture}%
\endgroup%
	
$$
$$
\hspace{4mm}
\begingroup%
  \makeatletter%
  \providecommand\color[2][]{%
    \errmessage{(Inkscape) Color is used for the text in Inkscape, but the package 'color.sty' is not loaded}%
    \renewcommand\color[2][]{}%
  }%
  \providecommand\transparent[1]{%
    \errmessage{(Inkscape) Transparency is used (non-zero) for the text in Inkscape, but the package 'transparent.sty' is not loaded}%
    \renewcommand\transparent[1]{}%
  }%
  \providecommand\rotatebox[2]{#2}%
  \ifx\svgwidth\undefined%
    \setlength{\unitlength}{363.95307617bp}%
    \ifx\svgscale\undefined%
      \relax%
    \else%
      \setlength{\unitlength}{\unitlength * \real{\svgscale}}%
    \fi%
  \else%
    \setlength{\unitlength}{\svgwidth}%
  \fi%
  \global\let\svgwidth\undefined%
  \global\let\svgscale\undefined%
  \makeatother%
  \begin{picture}(1,0.22500166)%
    \put(0,0){\includegraphics[width=\unitlength]{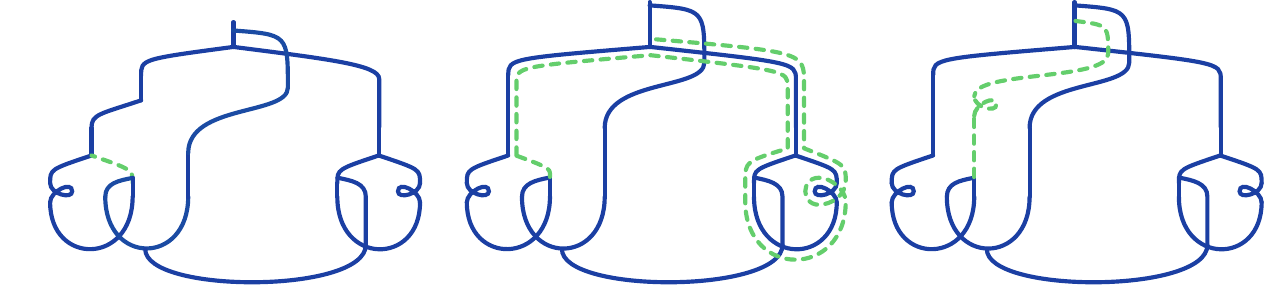}}%
    \put(-0.0007835,0.11029635){\color[rgb]{0,0,0}\makebox(0,0)[lb]{\smash{$=$}}}%
    \put(0.33991977,0.11029635){\color[rgb]{0,0,0}\makebox(0,0)[lb]{\smash{$=$}}}%
    \put(0.68062304,0.11029635){\color[rgb]{0,0,0}\makebox(0,0)[lb]{\smash{$=$}}}%
  \end{picture}%
\endgroup%
	
$$
$$
\hspace{4mm}
\begingroup%
  \makeatletter%
  \providecommand\color[2][]{%
    \errmessage{(Inkscape) Color is used for the text in Inkscape, but the package 'color.sty' is not loaded}%
    \renewcommand\color[2][]{}%
  }%
  \providecommand\transparent[1]{%
    \errmessage{(Inkscape) Transparency is used (non-zero) for the text in Inkscape, but the package 'transparent.sty' is not loaded}%
    \renewcommand\transparent[1]{}%
  }%
  \providecommand\rotatebox[2]{#2}%
  \ifx\svgwidth\undefined%
    \setlength{\unitlength}{366.06345215bp}%
    \ifx\svgscale\undefined%
      \relax%
    \else%
      \setlength{\unitlength}{\unitlength * \real{\svgscale}}%
    \fi%
  \else%
    \setlength{\unitlength}{\svgwidth}%
  \fi%
  \global\let\svgwidth\undefined%
  \global\let\svgscale\undefined%
  \makeatother%
  \begin{picture}(1,0.23406178)%
    \put(0,0){\includegraphics[width=\unitlength]{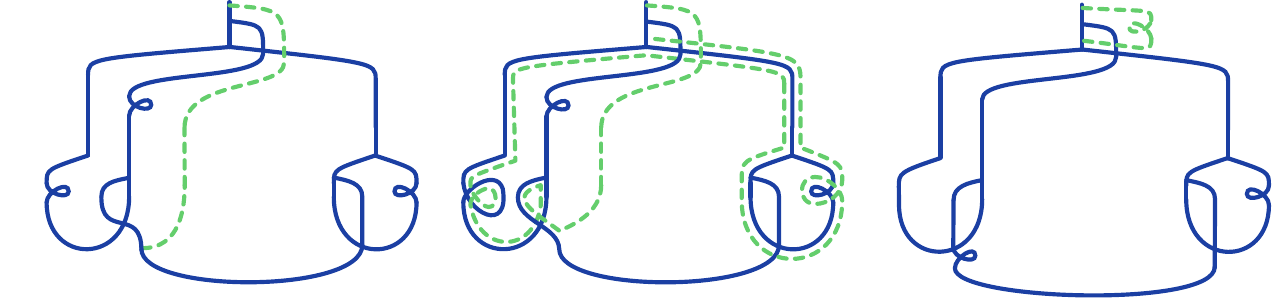}}%
    \put(-0.00077899,0.11375792){\color[rgb]{0,0,0}\makebox(0,0)[lb]{\smash{$=$}}}%
    \put(0.3379601,0.11375792){\color[rgb]{0,0,0}\makebox(0,0)[lb]{\smash{$=$}}}%
    \put(0.67669919,0.11375792){\color[rgb]{0,0,0}\makebox(0,0)[lb]{\smash{$=$}}}%
  \end{picture}%
\endgroup%
	
$$
$$
\begingroup%
  \makeatletter%
  \providecommand\color[2][]{%
    \errmessage{(Inkscape) Color is used for the text in Inkscape, but the package 'color.sty' is not loaded}%
    \renewcommand\color[2][]{}%
  }%
  \providecommand\transparent[1]{%
    \errmessage{(Inkscape) Transparency is used (non-zero) for the text in Inkscape, but the package 'transparent.sty' is not loaded}%
    \renewcommand\transparent[1]{}%
  }%
  \providecommand\rotatebox[2]{#2}%
  \ifx\svgwidth\undefined%
    \setlength{\unitlength}{353.62915039bp}%
    \ifx\svgscale\undefined%
      \relax%
    \else%
      \setlength{\unitlength}{\unitlength * \real{\svgscale}}%
    \fi%
  \else%
    \setlength{\unitlength}{\svgwidth}%
  \fi%
  \global\let\svgwidth\undefined%
  \global\let\svgscale\undefined%
  \makeatother%
  \begin{picture}(1,0.36049677)%
    \put(0,0){\includegraphics[width=\unitlength]{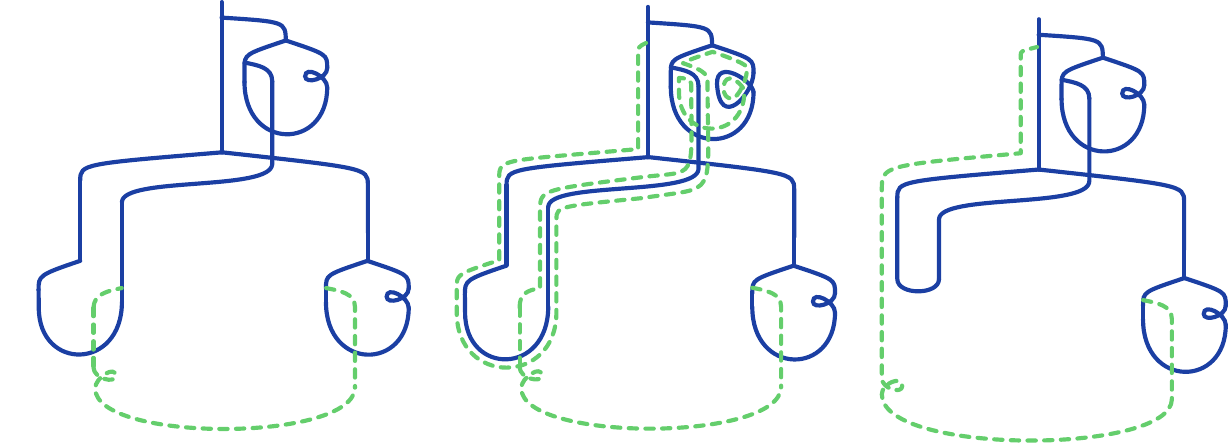}}%
    \put(-0.00080637,0.15119267){\color[rgb]{0,0,0}\makebox(0,0)[lb]{\smash{$=$}}}%
    \put(0.33853217,0.15119267){\color[rgb]{0,0,0}\makebox(0,0)[lb]{\smash{$=$}}}%
    \put(0.67787071,0.15119267){\color[rgb]{0,0,0}\makebox(0,0)[lb]{\smash{$=$}}}%
  \end{picture}%
\endgroup%
	
$$
$$
\hspace{5mm}
\begingroup%
  \makeatletter%
  \providecommand\color[2][]{%
    \errmessage{(Inkscape) Color is used for the text in Inkscape, but the package 'color.sty' is not loaded}%
    \renewcommand\color[2][]{}%
  }%
  \providecommand\transparent[1]{%
    \errmessage{(Inkscape) Transparency is used (non-zero) for the text in Inkscape, but the package 'transparent.sty' is not loaded}%
    \renewcommand\transparent[1]{}%
  }%
  \providecommand\rotatebox[2]{#2}%
  \ifx\svgwidth\undefined%
    \setlength{\unitlength}{389.7890625bp}%
    \ifx\svgscale\undefined%
      \relax%
    \else%
      \setlength{\unitlength}{\unitlength * \real{\svgscale}}%
    \fi%
  \else%
    \setlength{\unitlength}{\svgwidth}%
  \fi%
  \global\let\svgwidth\undefined%
  \global\let\svgscale\undefined%
  \makeatother%
  \begin{picture}(1,0.31745902)%
    \put(0,0){\includegraphics[width=\unitlength]{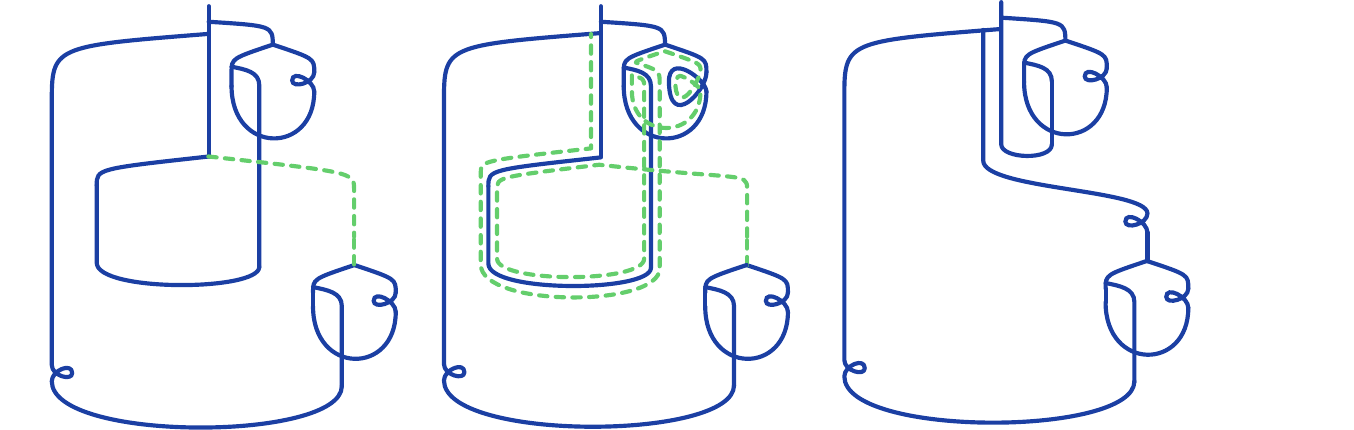}}%
    \put(-0.00073157,0.13709755){\color[rgb]{0,0,0}\makebox(0,0)[lb]{\smash{$=$}}}%
    \put(0.28660333,0.13709755){\color[rgb]{0,0,0}\makebox(0,0)[lb]{\smash{$=$}}}%
    \put(0.57393823,0.13709755){\color[rgb]{0,0,0}\makebox(0,0)[lb]{\smash{$=$}}}%
    \put(0.89205901,0.13709755){\color[rgb]{0,0,0}\makebox(0,0)[lb]{\smash{$=$}}}%
    \put(0.92284489,0.13709755){\color[rgb]{0,0,0}\makebox(0,0)[lb]{\smash{$\eta^2$}}}%
  \end{picture}%
\endgroup%
	
$$
\end{proof}
The partition function for a surface with spin structure can now be presented. Each handle contributes the element $\chi$ or $\eta$ depending on the spin structure; the partition function is thus
\begin{align}Z(\Sigma_g,s)=R\varepsilon(\eta^{g-l}\chi^{l}).\end{align}
However, the properties described in lemma~\ref{lem:properties_eta_chi} mean that all that matters is $l\,\mo 2$ which reflects the fact the partition function is a homeomorphism invariant. The only homeomorphism invariant of a spin structure is the Arf invariant of the quadratic form, $\Arf(q)=l\,\mo 2\in\Zb_2$. 
It is most convenient to express this invariant of the spin structure as a sign $P(s)=(-1)^{\Arf(q)}$ called the parity. The spin structure is called even if $P(s)=1$ and odd if $P(s)=-1$.  

 These results are collected together to give the main result for this section.
\begin{theorem} \label{theo:main}
Let  $(C,B,R,\lambda)$ be a spin state sum model. Then the partition function $Z$ of a triangulated surface $\Sigma_g$ of genus $g$ immersed in $\Rb^3$ depends only on $g$ and the parity of the spin structure $s$. Moreover,
\begin{align}
Z(\Sigma_g,s)=\left\lbrace
\begin{array}{lcl}
R\varepsilon(\eta^g)&\quad& \text{($s$ even parity)}\\
R\varepsilon(\chi\eta^{g-1})&& \text{($s$ odd parity).}
\end{array}
\right. 
\end{align}
\end{theorem}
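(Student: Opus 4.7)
My plan is to build on the preparatory material already developed in the section, so that the theorem reduces to a short bookkeeping argument. By lemma~\ref{lem:spin-triangulation} and lemma~\ref{spin-embedding}, the partition function is an invariant of $(\Sigma_g,s)$, so I am free to evaluate it using any triangulation and any immersion whose induced spin structure is $s$. I would use the standard $4g$-gon triangulation (figure~\ref{fig:4gpoly}) together with an immersion obtained from $\phi_0$ by inserting a curl in each of the $2g$ ribbon loops whose corresponding basis cycle $c$ of $H_1(\Sigma_g,\mathbb{Z}_2)$ satisfies $q(c)=1$. This matches the explicit construction recalled just before lemma~\ref{spin-embedding}, so it does realise the spin structure $s$.

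Next I would analyse the resulting planar diagram handle-by-handle. Labelling the basis cycles of the $i$-th handle $a_i,b_i$, one reads off from $\gamma_g$ that the local contribution is a self-composition of the cylinder maps $p,n_1,n_2$ studied in the previous lemma. Using axioms \ref{fig:axiom_form}--\ref{fig:axiom_Reid_3} together with the identity $\varphi^2=\id$, I would show that inserting a curl on $a_i$ alone, or on $b_i$ alone, transforms the local contribution of that handle by $\varphi$ applied at exactly one point of a closed loop, which can then be absorbed (the same diagrammatic manipulation used to prove $\eta_1=\eta_2=\eta_3$ and $p\circ\varphi=p$). Consequently the handle contributes $\eta$ whenever $q(a_i)q(b_i)=0$, and contributes $\chi$ whenever $q(a_i)q(b_i)=1$. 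Setting $l = \sum_{i=1}^g q(a_i)q(b_i)$, the full partition function is therefore
\begin{equation*}
Z(\Sigma_g,s)=R\,\varepsilon\!\left(\eta^{g-l}\chi^{l}\right),
\end{equation*}
exactly as stated in the paragraph preceding the theorem.

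From here the argument is purely algebraic. By lemma~\ref{lem:properties_eta_chi}, $\eta$ and $\chi$ lie in the centre $\mathcal{Z}(A)$ and satisfy $\eta^2=\chi^2$. If $l$ is even, write $l=2k$; then $\chi^l=(\chi^2)^k=(\eta^2)^k=\eta^l$ and hence $\eta^{g-l}\chi^l=\eta^g$. If $l$ is odd, write $l=2k+1$ and use centrality to obtain $\eta^{g-l}\chi^l=\chi\,(\chi^2)^k\eta^{g-l}=\chi\,\eta^{g-1}$. Finally, I would recall that on the standard symplectic basis $\{a_i,b_i\}$ the Arf invariant is $\Arf(q)=\sum_i q(a_i)q(b_i)\bmod 2$, so $l\bmod 2=\Arf(q)$ and $P(s)=(-1)^l$. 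Thus $Z(\Sigma_g,s)$ equals $R\varepsilon(\eta^g)$ when $s$ has even parity and $R\varepsilon(\chi\eta^{g-1})$ when $s$ has odd parity, proving the theorem.

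The main obstacle is the handle-by-handle reduction in the middle paragraph: one must verify diagrammatically that inserting a curl on just one of the two cycles of a handle does \emph{not} change the local element from $\eta$ to $\chi$, while inserting curls on both does. This is ultimately a consequence of the equality $\eta_1=\eta_2=\eta_3$ together with the fact that a single curl on a cycle that re-enters the vertex can be slid around the loop using the ribbon condition and then cancelled by $\varphi^2=\id$; organising these manipulations cleanly in a diagram spanning a whole handle is where most of the labour lies.
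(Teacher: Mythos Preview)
Your proposal is correct and follows essentially the same route as the paper: invoke lemmas~\ref{lem:spin-triangulation} and~\ref{spin-embedding} to reduce to the standard diagram $\gamma_g$ with curls, identify each handle's contribution as one of the four elements $\eta_1,\eta_2,\eta_3,\chi$ (with the equality $\eta_1=\eta_2=\eta_3$ already established), obtain $Z=R\varepsilon(\eta^{g-l}\chi^l)$, and then use lemma~\ref{lem:properties_eta_chi} together with the Arf invariant to collapse the dependence to $l\bmod 2$. Your write-up is in fact slightly more explicit than the paper's, which simply states ``each handle contributes the element $\chi$ or $\eta$ depending on the spin structure'' and then collects the results; the ``obstacle'' you flag in your final paragraph is precisely the content of the diagrammatic identity $\eta_1=\eta_2=\eta_3$ already proved, so nothing further is needed.
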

Note that $Z(\Sigma_0)$ is independent of the choice of $\lambda$, as is to be expected. According to the classification of planar state sum models given in theorem~\ref{theo:diagram_semi_simple}, $\eta \in \mathcal{Z}(A)$ implies that $\eta=\oplus_i \,\eta_i\,1_i$ for some constants $\eta_i \in \Rb, \Cb_{\Rb}$ or $\Cb$. The expression for $\chi$ will therefore be $\chi=\oplus_i \,\text{sgn}_i\,\eta_i\,1_i$, where $\text{sgn}_i=\pm 1$, since $\chi$ is also a central element and $\eta^2=\chi^2$. In particular this means simple matrix algebras can at most attribute different signs to spin structures of different parity.

An algebraic condition that guarantees topologically-inequivalent spin structures cannot be distinguished is $\eta=\chi$. It is now shown that the canonical crossing map gives rise to spin state sum models that fall into this class.
\begin{corollary}
Let $\lambda \colon A \otimes A \to A \otimes A$ be such that $a \otimes b \mapsto b \otimes a$. Then $\chi=\eta$, implying the partition function does not depend on the spin structure.
\end{corollary}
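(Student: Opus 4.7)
The plan is to reduce the corollary to the single structural observation that the canonical crossing $\lambda(a\otimes b)=b\otimes a$ trivialises the curl map, i.e.\ $\varphi=\id_A$, and then to observe that the diagrams defining $\eta$ and $\chi$ differ only by insertions of a curl.

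First I would verify that with $\lambda$ the canonical swap, all the axioms \ref{fig:axiom_form}--\ref{fig:axiom_left_right} of Definition~\ref{def:spin-model} are satisfied in the strongest possible way: compatibility with $B$ and $C$ becomes the usual symmetry/cocommutativity of the swap with the Frobenius form and multiplication; RII is immediate from $\lambda^2=\id$; RIII reduces to the braid relation for the symmetric group on three strands. In particular, using the snake identity \eqref{eq:snake} together with $\lambda(a\otimes b)=b\otimes a$, one can compute directly that either curl diagram in axiom~\ref{fig:axiom_left_right} evaluates to $\id_A$, since the strand makes a loop whose crossing is a trivial swap and the two snake bends then cancel by \eqref{eq:snake}. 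Hence $\varphi=\id_A$.

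Next I would turn to the definitions of $\eta$ and $\chi$. The diagrams differ by the insertion of a curl on one of the two loops of the torus handle: explicitly, $\chi$ is obtained from $\eta_1$ (or $\eta_2$, $\eta_3$) by replacing a straight strand passing through the handle by a strand with one curl. The partition-function value assigned to such a curl is precisely the map $\varphi$ acting on the state carried by that strand. Since $\varphi=\id_A$, this insertion has no effect on the algebraic expression, and so $\chi=\eta$.

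The only point requiring a little care is matching the pictorial ``insertion of a curl'' to an algebraic factor of $\varphi$ on the correct strand; this is straightforward once one recalls (as established in Lemma~\ref{spin-embedding}) that curls are the only regular-homotopy invariant data distinguishing even from odd handles, and that an isolated curl on a strand is by definition evaluated by the map $\varphi$ of axiom~\ref{fig:axiom_left_right}. Given $\chi=\eta$, Theorem~\ref{theo:main} immediately yields $Z(\Sigma_g,s)=R\varepsilon(\eta^g)$ for every spin structure $s$, so the partition function is independent of $s$. I do not expect any serious obstacle: the core content is just the identification $\varphi=\id$ under the canonical swap, which is a short diagrammatic check.
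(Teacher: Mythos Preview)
Your argument contains a genuine gap. You assert that with the canonical swap the curl map $\varphi$ evaluates to the identity, invoking the snake identity to cancel the cap and cup. This is only true when $B$ is symmetric. In the curl, after applying the swap, the cap--cup pair appears with the ``wrong'' index order: one obtains $B_{ca}B^{cb}$ rather than $B_{ac}B^{cb}$. The snake identity \eqref{eq:snake} only simplifies the latter. The former is exactly the matrix of the Nakayama automorphism, $\sigma_a{}^b = B_{ca}B^{cb}$ (cf.\ the proof of Lemma~\ref{lem:spherical}). Hence for the canonical crossing one has $\varphi=\sigma$, not $\varphi=\id$. Your argument therefore recovers only the symmetric (FHK) case already noted in Example~\ref{ex:FHK}, not the corollary as stated, which applies to arbitrary special Frobenius algebras.

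The paper's proof proceeds quite differently and does not use curl-freeness at all. It identifies $\varphi=\sigma$, observes that for a separable algebra over $\Rb$ or $\Cb$ the Nakayama automorphism is inner, $\sigma(a)=xax^{-1}$, and then exploits the characterisation of $\overline{\mathcal Z}_\lambda(A)$ (for the canonical $\lambda$) as the set of $a$ with $ab=\sigma(b)a$. Since $n_1(a)\in\overline{\mathcal Z}_\lambda(A)$, the relation $n_2=\sigma\circ n_1$ together with innerness yields $n_2(a)=x\,n_1(a)\,x^{-1}=x\sigma(x^{-1})n_1(a)=n_1(a)$, so $n_1=n_2$ and hence $\eta=\chi$. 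The key extra input you are missing is precisely that $\sigma$ is inner; without it (or without symmetry of $B$) the curl does not trivialise and your route does not close.
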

\begin{proof}
For a crossing of the form above it is easy to conclude $\varphi=\sigma$ where $\sigma$ represents the Nakayama automorphism associated with the Frobenius form $\varepsilon$. The set $\overline{\mathcal{Z}}_{\lambda}(A)$ coincides in this case with the set of elements $a \in A$ satisfying $ab=\sigma(b)a$ for all $b \in A$. Recall that if an algebra $A$ satisfies the conditions of theorem~\ref{theo:diagram} then $\sigma$ is an inner automorphism: $\sigma(a)=xax^{-1}$. Then it is possible to conclude $n_2(a)=n_1(a)$ for all $a \in A$ by the argument
\begin{align} 
n_2(a)=\sigma \circ n_1 (a)=x n_1(a)x^{-1}=x \sigma(x^{-1})n_1(a)=n_1(a).
\end{align}
The diagrammatic form of $\eta$ and $\chi$ implies that $\eta=\chi$ if the maps $n_1$ and $n_2$ coincide. Then theorem \ref{theo:main} implies that the partition function does not distinguish spin parity. 

\end{proof}
Our conclusions so far do not guarantee the existence of crossing maps satisfying $\eta \neq \chi$. The last efforts in this section therefore concentrate on presenting various examples of such algebras.

\begin{example}[Algebras $A=\Mb_n(\Cb_{\Rb})$] \label{ex:complex_crossing}\hspace{1mm}\\
\noindent These algebras are naturally $\Zb_2$-graded: $\Mb_n(\Cb_{\Rb}) =A_0 \bigoplus A_1$ with $A_0=\Mb_{n}(\Rb)$ and $A_1=\hat{\imath}\Mb_{n}(\Rb)$. There is a unique non-trivial crossing that can be constructed from a $\Zb_2$-bicharacter $\tilde{\lambda}$ (see example~\ref{ex:non_symmetric_Frobenius}). The components of the crossing are determined by the relation $\tilde{\lambda}(h,j)=(-1)^{hj}$. If the Frobenius form is taken to be symmetric then
\begin{align}
B=(2Rn)^{-1}\sum_{lm}\left(e_{lm} \otimes e_{ml}-\hat{\imath}e_{lm} \otimes \hat{\imath}e_{ml}\right). 
\end{align}
Note the constant $2Rn$ arises from the definition of the Frobenius form, $\varepsilon(a)=2Rn\Real \Tr(a)$. This information can be used to determine the relation 
\begin{align}\label{eq:eta_com}
\eta=(2Rn)^{-2}\left(\sum_{lmrs}e_{lm}e_{rs}e_{ml}e_{sr}\right)\left(\sum_{hj}\tilde{\lambda}(h,j)\right).
\end{align}
Further identifying the first sum as the unit element and the second one as the constant $2$, one concludes that $\eta=2(2Rn)^{-2}1$. The element $\chi$ is constructed in an analogous fashion; however, the term $\sum_{hj}\tilde{\lambda}(h,j)$ is replaced with $\sum_{hj}\tilde{\lambda}(h,h)\tilde{\lambda}(h,j)\tilde{\lambda}(j,j)=-2$. Hence, $\chi=-2(2Rn)^{-2}1$. The invariant produced distinguishes spin structures of different parity and takes the form
\begin{align}\label{eq:first_spin_inv}
Z(\Sigma_g,s)=P(s)2^{1-g}R^{2-2g}n^{2-2g}
\end{align}
This expression can be compared with the relation found for FHK state sums models where $Z(\Sigma_g)=2R^{2-2g}n^{2-2g}$. Note that in this canonical case $\eta=\chi=z$ and the term $\sum_{hj}\tilde{\lambda}(h,j)$ in equation \eqref{eq:eta_com} is equal to $4$; hence $z=(Rn)^{-2}1$, as was previously remarked in \S \ref{sec:lattice_tft}.

The result \eqref{eq:first_spin_inv} can be seen as a special case of the one obtained for $\varepsilon(a)=2R(p-q)\Real\Tr(u a)$, $u=\text{diag}(p,q)$, $p\neq q$ and the same algebra grading:
\begin{align}
Z(\Sigma_g,s)=P(s)2^{1-g}R^{2-2g}(p-q)^{2-2g}.
\end{align}
To reach this conclusion note the expression for $B$ is now
\begin{align}
B=(2R(p-q))^{-1}\sum_{lm}\left(e_{lm} u\otimes e_{ml}-\hat{\imath}e_{lm} u\otimes \hat{\imath}e_{ml}\right).
\end{align}
By recognising the Nakayama automorphism $\sigma$ associated with $\varepsilon$ satisfies $\sigma(a)=uau$,  $\eta$ can be written as 
\begin{align} \label{eq:eta_non_com}
\eta=(2R(p-q))^{-2}\left(\sum_{lmrs}e_{lm}\sigma(e_{rs})e_{ml}e_{sr}\right)\left(\sum_{hj}\tilde{\lambda}(h,j)\right).
\end{align}
The action of $\sigma$ separates elements of $\Mb_{n}(\Rb)$ into two types (see example~\ref{ex:non_symmetric_Frobenius}): $\sigma(a)=a$ if $a$ is block-diagonal and $\sigma(a)=-a$ if $a$ is block-anti-diagonal. However, if $a$ is block-anti-diagonal $\sum_{lm}e_{lm} ae_{ml}=0$ effectively reducing \eqref{eq:eta_non_com} to \eqref{eq:eta_com} with the replacement $n \to p-q$. A similar reasoning holds for $\chi$.

\end{example}
\begin{example}[Algebras $A=\Mb_n(\Hb_{\Rb})$] \label{ex:quaternions_crossing} \hspace{1mm}\\
\noindent Consider the group of the quaternions $\widehat{\mathcal{K}}=\left\lbrace 1,\hat{\imath},\hat{\jmath},\hat{k}, -1,-\hat{\imath},-\hat{\jmath},-\hat{k}\right\rbrace  \ni w$ and define $A_w=w\Mb_{n}(\Rb)$. Then $A_w=A_{-w}$ and $A_wA_t=A_{wt}$, so that the algebra  $A=\Mb_n(\Hb_{\Rb})$ is graded by the quotient group $\mathcal{K}=\widehat{\mathcal{K}}/\{\pm1\}$, which is isomorphic to the Klein group $\Zb_2 \times \Zb_2$.
 The grading is conveniently written $\Mb_n(\Hb_{\Rb}) = \oplus_{w}A_w$ with $w\in\left\lbrace 1,\hat{\imath},\hat{\jmath},\hat{k}\right\rbrace$. 

The following table encodes the components of all the possible $\mathcal{K}$-bicharacters $\tilde{\lambda}$ that would give rise to a crossing. Each triple $(\alpha,\beta,\gamma)$ with $\alpha,\beta, \gamma \in \lbrace -1,1\rbrace$ determines one such bicharacter.
\begin{center}
  \begin{tabular}{| c | c | c | c | c |}
    \hline
    $\tilde{\lambda}(w,t)$	& $\, 1 \,$	& $\hat{\imath}$	& $\hat{\jmath}$	& $\hat{k}$\\ \hline
    $1$			& $1$ & $1$			& $1$ 			& $1$ \\ \hline
   $\hat{\imath}$	& $1$	&  $\alpha\beta$	& $\alpha$		& $\beta$ \\ \hline
   $\hat{\jmath}$	& $1$ 	&  $\alpha$		& $\alpha\gamma$	& $\gamma$ \\ \hline
   $\hat{k}$			& $1$ 	&  $\beta$		& $\gamma$		& $\beta\gamma$\\
    \hline
  \end{tabular}
\end{center}
If the Frobenius form is taken to be symmetric then
\begin{align}
B=(4Rn)^{-1}\sum_{lm,w}\left(w \,e_{lm} \otimes w^{\ast}\,e_{ml}\right).
\end{align}
Note the constant $4Rn$ arises from the definition of the Frobenius form, $\varepsilon(a)=4Rn\Real \Tr(a)$. This information can be used to determine the identity 
\begin{align}
\eta=(4Rn)^{-2}\left(\sum_{lmrs}e_{lm}e_{rs}e_{ml}e_{sr}\right)\left(\sum_{wt}\tilde{\lambda}(w,t)wtw^{\ast}t^{\ast}\right).
\end{align}
The first sum is simply the identity element 1. The second, with some algebraic manipulation, can be seen to satisfy
\begin{align}
\sum_{wt}\tilde{\lambda}(w,t)wtw^{\ast}t^{\ast}=\frac{11}{2}-2\Lambda+ \frac{\Lambda^2}{2}
\end{align}
with $\Lambda = \alpha + \beta + \gamma$. The element $\chi$ is constructed in an analogous fashion; however, the term $\sum_{wt}\tilde{\lambda}(w,t)wtw^{\ast}t^{\ast}$ is replaced with 
\begin{equation}
\begin{aligned}
\sum_{wt}\tilde{\lambda}(w,w)\tilde{\lambda}(w,t)\tilde{\lambda}(t,t)wtw^{\ast}t^{\ast}=-\frac{7}{2}+7\Lambda+\frac{3}{2}\Lambda^2-\Lambda^3&\\
=\sum_{wt}\tilde{\lambda}(w,t)wtw^{\ast}t^{\ast}-\left(\Lambda -1\right)\left(\Lambda -3\right)\left(\Lambda +3\right)&
\end{aligned}
\end{equation}
It is easy to verify $\Lambda \in \lbrace -3,-1,1,3\rbrace$ (the canonical crossing corresponds to the choice $\Lambda=3$). Therefore only the crossings satisfying $\Lambda=-1$ distinguish spin structures. 
The partition functions are  
\begin{align} \label{eq:quaternion_spin}
Z(\Sigma_g,s)=\begin{cases}4(Rn)^{2-2g} &(\Lambda=-3) \\ P(s)2^{2-g}(Rn)^{2-2g}&(\Lambda=-1) \\ (2Rn)^{2-2g} &(\Lambda=+1 \text{ or } +3) \end{cases}
\end{align}

The result \eqref{eq:quaternion_spin} can be slightly generalised -- in a manner identical to that described in example \ref{ex:complex_crossing} -- by replacing the symmetric bilinear form with $\varepsilon(a)=4R(p-q)\Real\Tr(u a)$, $u=\text{diag}(p,q)$, $p\neq q$. The partition functions read as \eqref{eq:quaternion_spin} but with the replacement $n \to p-q$.
\end{example}

The final example presents all possible crossings  for some low-dimensional commutative algebras. Some of these models have the property that $\eta\ne\pm \chi$, in contrast to the previous examples.
\begin{example}[Algebras $\oplus_1^m\Cb$ for $m=2,3,4$] \hspace{1mm}\\ \label{ex:cyclic}
The algebra $A=\Cb\oplus\ldots\oplus\Cb$ is isomorphic to the group algebra of the cyclic group, $A\cong\Cb C_m$, and this isomorphism is useful in presenting the results. The notation $C_m=\lbrace e,h,\cdots,h^{m-1}\rbrace$ is used. For $m=2$ there are two possible state sum models: either $\lambda$ is canonical, in which case the partition function is the $n=1$ case of theorem \ref{lem:inv}, or it is the spin model given by the $n=1$ case of example \ref{ex:complex_crossing}.

Two crossings are also compatible with $\Cb C_3$: one, the canonical; the other, giving rise to $\eta=R^{-2}\left(\frac{2}{3}e+\frac{1}{6}(h+h^2)\right)$ and $\chi=\frac{R^{-2}}{2}(h+h^2)$ and therefore to a new spin invariant:
\begin{align}
Z(\Sigma_g,s)=(1+P(s)2^{1-g})R^{2-2g}.
\end{align} 
Defined according to $\lambda (h^j \otimes h^l)= \lambda^{jl}{}_{op}\,h^{o}\otimes h^p$, the components of the non-trivial crossing are presented in a matrix format: $\lambda^{jl}{}_{op}$ is the $(op)$ entry of a matrix $\lambda^{jl}$. Note axiom \eqref{fig:axiom_form} of a crossing implies $(\lambda^{1j})_{op}=\delta^1_p\delta^j_o$ while axiom \eqref{fig:axiom_mult} determines $\lambda^{lj}=(\lambda^{jl})^{\tr}$. The remaining matrices read
\begin{align}
\lambda^{22}=\frac{1}{2}\left(\begin{array}{ccc} 0 & 0 & 0 \\ 0 & 1 & 1 \\ 0 & 1 & -1\end{array}\right), \hspace{2mm}
\lambda^{23}=\frac{1}{2}\left(\begin{array}{ccc} 0 & 0 & 0 \\ 0 & 1 & -1 \\ 0 & 1 & 1\end{array}\right) .
\end{align}

Finally, the $\Cb C_4$ case is richer in complexity. There are a total of twelve crossings allowed giving rise to the following invariants:
\begin{align}
Z(\Sigma_g,s)=
\begin{cases}
2^{2-2g}R^{2-2g}&\left(\eta=\chi=(2R)^{-2}e\right)\\
\\
4R^{2-2g}&\left(\eta=\chi=R^{-2}e\right)\\
\\
P(s)2^{2-g}R^{2-2g}&\left(\eta=-\chi=2^{-1}R^{-2}e\right)\\
\\
(2+P(s)2^{1-g})R^{2-2g}&(\eta=\frac{R^{-2}}{4}(3e\pm h^2) \\
& \hspace{1.2mm}\chi =\frac{R^{-2}}{4}(e\pm 3h^2) )
\end{cases}
\end{align}
These crossings were found by solving all of the constraints using computer algebra. The program used by the authors is available from the arXiv version of this paper as an ancillary file.
\end{example}

\section{Categorical generalisations}\label{sec:cat}

The axioms for the models are motivated by the definitions of various types of categories. This means that the state sum models have abstract generalisations in a category framework. The notion of a Frobenius algebra in a monoidal category was introduced by Street \cite{Street}. In this definition the algebra is an object $A$ in the category $\mathcal C$. This object has duals and obeys the axioms of a Frobenius algebra. The additional axiom for a special Frobenius algebra can also be translated into the categorical language. It is convenient (but slightly less general) to assume that all of the objects in the category have duals, i.e. $\mathcal C$ is a pivotal category (also called a sovereign category). Thus the construction of \S\ref{sec:diagram} can be generalised in a straightforward way to show that a special Frobenius algebra in a pivotal category gives a categorical analogue of a planar state sum model. In this categorical analogue, the evaluation of the dual graph in \eqref{eq:diag-partition} is determined by composition in the category instead of linear algebra.

The partition function for the corresponding spherical state sum is then
\begin{equation}Z(S^2)=R^2\dim A,\end{equation}
using $\dim A$ for the categorical dimension (quantum dimension) of $A$. The spherical symmetry arises because the pivotal subcategory of  $\mathcal C$ generated by $A, m, 1$ is in fact a spherical category.

An example of this construction is given by starting with an object $V$ in the pivotal category $\mathcal C$. Then the object $A=V^*\otimes V$ is the categorical generalisation of the matrices over $V$. There is a natural multiplication map $m$ and a unit that makes $A$ a Frobenius algebra in $\mathcal C$. In this case, $\dim A=\dim V\dim V^*$ and $R^{-1}=\dim V$. Then the partition function reduces to
\begin{equation} Z(S^2)=R\dim V^*=\frac{\dim V^*}{\dim V},\label{eq:pivotal-partition}\end{equation}
 generalising the case of $n\times n$ matrices for $k=\Cb$ from \eqref{eq:diag_sphere_inv}. 

The spherical condition can be accomodated more generally by requiring $\mathcal C$ to be a spherical category. Then the spherical condition will hold for all morphisms in the category, which can be viewed as `defects' \cite{Runkel} for the state sum model. For the example  $A=V^*\otimes V$, this implies that $Z(S^2)=1$.

The axioms of definition \ref{def:spin-model} for the spin state sum models are contained in the axioms for a special Frobenius algebra in a symmetric ribbon category. Thus any special Frobenius algebra in a symmetric ribbon category will determine a spin state sum model. However, we do not have any interesting examples that are more general than the matrix ones given in section \ref{sec:spinssm}.

\section*{Acknowledgments} The authors thank Ingo Runkel for helpful comments.
SOGT thanks Antony Lee for discussions and acknowledges financial support from Funda\c{c}\~{a}o para a Ci\^{e}ncia e
Tecnologia, Portugal through grant no. SFRH/BD/68757/2010. JWB was supported by STFC particle physics theory consolidated grant ST/L000393/1.
\bibliographystyle{unsrt}
\bibliography{references}

\end{document}